\newcommand{\alg}{\mathbf}
\newcommand{\class}{\mathsf}
\newcommand{\logic}{\mathrm}
\newcommand{\pair}[2]{\langle #1, #2 \rangle}
\newcommand{\triple}[3]{\langle #1, #2, #3 \rangle}
\newcommand{\quadruple}[4]{\langle #1, #2, #3, #4 \rangle}
\newcommand{\set}[2]{\{ #1 \mid #2 \}}
\newcommand{\assign}{:=}
\newcommand{\dmneg}{{-}}
\newcommand{\iso}{\cong}
\newcommand{\Rrel}{R}
\newcommand{\inequals}{\leq}
\DeclareMathOperator{\Con}{Con}
\newcommand{\Leibniz}[2]{\Omega^{#1} #2}
\newcommand{\True}{\mathsf{t}}
\newcommand{\False}{\mathsf{f}}
\newcommand{\Neither}{\mathsf{n}}
\newcommand{\Both}{\mathsf{b}}
\newcommand{\Inter}{\mathsf{i}}
\newcommand{\IsTrue}{\mathrm{True}}
\newcommand{\NonFalse}{\mathrm{NonFalse}}
\newcommand{\ExTrue}{\mathrm{ExTrue}}
\newcommand{\Btwo}{\alg{B}_{\alg{2}}}
\newcommand{\Kthree}{\alg{K}_{\alg{3}}}
\newcommand{\DMfour}{\alg{DM}_{\alg{4}}}
\newcommand{\K}{\logic{K}}
\newcommand{\LP}{\logic{LP}}
\newcommand{\BD}{\logic{BD}}
\newcommand{\ETL}{\logic{ETL}}
\newtheorem{theorem}{Theorem}[section]
\newtheorem{lemma}[theorem]{Lemma}
\newtheorem{fact}[theorem]{Fact}
\theoremstyle{definition}
\newtheorem{definition}[theorem]{Definition}
\author{Adam P\v{r}enosil}
\title[Four-valued logics of truth, nonfalsity, \dots]{Four-valued logics of truth, non-falsity, exact~truth, and material equivalence}
\address{Vanderbilt University, Nashville, TN, USA}
\email{adam.prenosil@vanderbilt.edu}
\date{}
\keywords{Belnap--Dunn logic, Exactly True Logic, four-valued logic, non-classical logic, abstract algebraic logic, paraconsistent logic}
\thanks{This research was supported by grant 17-18344Y of the Czech Science Foundation. The~author is grateful to an anonymous referee for a thorough reading of the manuscript and helpful suggestions which served to improve the paper.}
\begin{document}

\begin{abstract}
  The four-valued semantics of Belnap--Dunn logic, consisting of the truth values True, False, Neither, and Both, gives rise to several non-classical \mbox{logics} depending on which feature of propositions we wish to preserve: truth, non-falsity, or exact truth (truth and non-falsity). Interpreting equality of truth values in this semantics as material equivalence of propositions, we can moreover see the equational consequence relation of this four-element algebra as a logic of material equivalence. In this paper we \mbox{axiomatize} all combinations of these four-valued logics, for example the logic of truth and exact truth or the logic of truth and material equivalence. These combined systems are \mbox{consequence} \mbox{relations} which allow us to express implications involving more than one of these features of propositions.
\end{abstract}

\maketitle

\section{Introduction}
\label{sec: intro}

  The well-known four-valued logic introduced by Belnap and Dunn~\cite{belnap77a,belnap77b,dunn76} in the 1970's is based on the simple idea that truth and falsehood may be treated as independent rather than complementary features of propositions. This yields a four-valued semantics whose four truth values are True, False, Neither (True~nor~False), and Both (True and False), abbreviated as $\True$, $\False$, $\Neither$, and $\Both$. The logic itself is then defined in terms of truth-preservation (or non-falsity-preservation).

  The four-valued algebra $\DMfour$ of Belnap and Dunn was already known at the time to generate the variety of De~Morgan lattices as a quasivariety (see \cite{kalman58}). That is, the equational consequence of $\DMfour$ can be axiomatized by adding the De \mbox{Morgan} laws for conjunction and disjunction and the law of double negation introduction and elimination to the axioms of distributive lattices. Giving the equality relation a logical reading as material equivalence of propositions (equality of truth values), this yields an axiomatization of the logic of four-valued material equivalence.

  More recently, Pietz and Rivieccio~\cite{pietz+rivieccio13,rivieccio12} realized that the same four truth values also yield what they call Exactly True Logic if instead of preserving truth we wish to preserve exact truth (truth and non-falsity). The~same idea was previously considered by Marcos~\cite{marcos11} for a logic with a different selection of connectives.

  The basic premise of the present paper is now straightforward: consider the unary predicates $\IsTrue$, $\ExTrue$ (``exactly true''), and $\NonFalse$ interpreted in the four-valued semantics of Belnap and Dunn by the sets $\{ \True, \Both \}$, $\{ \True \}$, and $\{ \True, \Neither \}$ \mbox{respectively}. We~\mbox{axiomatize} all \mbox{logics} (single-conclusion or multiple-conclusion con\-sequence re\-lations) determined by this four-valued semantics which allow for the use of more than one of the above predicates.

  To~take a simple example, the following inferences are valid in this semantics:
\begin{align*}
  \ExTrue(x) ~ \& ~ \IsTrue(\dmneg x \vee y) & \vdash \IsTrue(y), \\
  \IsTrue(x) ~ \& ~ \NonFalse(\dmneg x \vee y) & \vdash \NonFalse(y).
\end{align*}
  More explicitly, the first rule states that if $a \in \{ \True \}$ and $\dmneg a \vee b \in \{ \True, \Both \}$, then $b \in \{ \True, \Both \}$ for each $a, b \in \DMfour$. Similarly, if the binary predicate $\approx$ is interpreted by equality of truth values, then the following inference is also valid:
\begin{align*}
  \IsTrue(x) ~ \& ~ \IsTrue(y) \vdash \dmneg x \vee y \approx y.
\end{align*}
  We therefore see that there is some non-trivial interaction between the predicates $\ExTrue$, $\NonFalse$, $\IsTrue$, and $\approx$. The present paper is devoted to axiomatizing this inter\-action between all combinations of these predicates.\footnote{A different approach to combining Belnap--Dunn logic with the Exactly True Logic (as well as some other related logics) was taken by Shramko et al.~\cite{shramko+zaitsev+belikov19}, resulting in what they call bi-consequence systems.}

  We also consider what happens if we expand the signature of these logics by constants representing some of the elements $\True$, $\False$, $\Neither$, and $\Both$. Such an expansion will trivialize the problem in some cases. For example, although the predicate $\IsTrue$ is not definable in terms of the equality predicate in the basic signature, it becomes definable by the equation $\Both \wedge x \approx \Both$ if the constant $\Both$ is available.

  One motivation for expanding the relational signature of Belnap--Dunn logic comes from Belnap's original proposal~\cite{belnap77a} to use four-valued logic to handle information collected from a variety of sources which may be mutually inconstent as well as incomplete. Each such source informs us that certain propositions are true and certain others are false, and inconsistency easily arises when two sources are in conflict. Belnap's original relational signature, consisting merely of the predicate $\IsTrue$, is well chosen in that the input from our sources must take this form if we are to pool input from inconsistent sources directly. If one source claims that a proposition is true and another claims that it is false, we simply assign the value $\Both$ to this proposition. But if one source claims that a proposition is not false and another claims that it is, we are in as much trouble as in classical logic.

  The fact that the relational signature of Belnap--Dunn logic is appropriate for recording the input from our sources, however, does not imply that we must restrict ourselves to this limited relational signature when reasoning \emph{about} the information collected in this way. Indeed, the user of such a database of potentially inconsistent information is surely interested in differentiating between cases where we only have information supporting a proposition $p$ (in Belnap's terminology, $p$ is just told True) and cases where we have information supporting both $p$ and its negation (in Belnap's terminology, $p$ is told both True and False). In the conclusion of a logical inference, this distinction enables us to make better-informed decisions, while in the premises it enables us to make full use the available information.

  We now outline the structure of this paper. In Section~\ref{sec: preliminaries} we review the framework of abstract algebraic logic adopted in the present paper and we recall some known facts about De~Morgan lattices and Belnap--Dunn logic. In Section~\ref{sec: without equality} we axiomatize the logics of all combinations of the predicates $\IsTrue$, $\ExTrue$, and $\NonFalse$. This section of the paper is largely based on the author's thesis~\cite[Ch.~10]{prenosil18}. In Section~\ref{sec: with equality} we extend these logics by the equality predicate. Finally, we axiomatize the multiple-conclusion versions of these logics in Section~\ref{sec: mc}.

\section{Preliminaries}
\label{sec: preliminaries}

  In this preliminary section, we review the basic concepts of abstract algebraic logic. We also recall some facts about De~Morgan lattices and Belnap--Dunn logic. For~a more thorough introduction to abstract algebraic logic, the reader may consult the textbook~\cite{font16}. Although researchers in abstract algebraic logic generally assume that the relational signature of a logic consists of a single unary predicate, this is not a substantial feature of the theory and indeed we abandon this assumption here. The~theory for more general relational signatures was developed in \cite{dellunde+jansana96,elgueta94thesis}. We~shall assume \mbox{familiarity} with the basic notions of universal algebra, such as the reader might obtain from the textbook~\cite{burris+sankappanavar81}.

\subsection{Logics and structures}
\label{subsec: logics}

  We first define our syntactic objects of interest, namely formulas and rules. A \emph{signature} consists of a set of relational symbols of finite arity (called a \emph{relational signature}), a set of function symbols of finite arity (called an \emph{algebraic signature}), and an infinite set of variables. A~\emph{term} in a given signature is an element of the absolutely free algebra in the algebraic signature over the given set of variables. A~\emph{formula} in a given signature consists of a relational symbol $\Rrel$ of arity $n$ in the relational signature and an $n$-tuple of terms $t_{1}, \dots, t_{n}$ in the algebraic signature, written as $\Rrel(t_{1}, \dots, t_{n})$. A \emph{rule} is then a pair consisting of a set of formulas $\Gamma$ and a formula $\varphi$, written as $\Gamma \vdash \varphi$. The elements of the set $\Gamma$ will be called the \emph{premises} of the rule and the formula $\varphi$ will be called the \emph{conclusion} of the rule. Rules with a finite set of premises $\Gamma = \{ \gamma_{1}, \dots, \gamma_{n} \}$ will be written as $\gamma_{1} ~ \& ~ \dots ~ \& ~ \gamma_{n} \vdash \varphi$. For~example, $\IsTrue(x) ~ \& ~ \IsTrue(y) \vdash x \approx y$ is a rule in the relational signature which consists of a unary predicate $\IsTrue$ and a binary predicate ${\approx}$. We emphasize that the predicate ${\approx}$ need not be interpreted by the equality relation.
  
  Let us now consider a certain fixed signature. An \emph{algebra} $\alg{A}$ consists of a set $A$ and a \mbox{function} $f\colon A^{n} \to A$ for each function symbol of arity $n$. A \emph{structure} then consists of an \mbox{algebra} $\alg{A}$ and a relation $\Rrel \subseteq A^{n}$ for each relational symbol of arity~$n$. We~generally do not distinguish in notation between functions (relations) and function symbols (relational symbols). A \emph{valuation} on a structure is a homo\-morphism from the absolutely free algebra (the algebra of terms) to the algebra $\alg{A}$. A valuation $v$ is said to \emph{validate} a formula $\Rrel(t_{1}(x_{1}, \dots, x_{m}), \dots, t_{n}(x_{1}, \dots, x_{m}))$ if $\langle t_{1}(a_{1}, \dots, a_{m}), \dots, t_{n}(a_{1}, \dots, a_{m}) \rangle \in \Rrel$ where $a_{i} = v(x_{i})$ for each $x_{i}$. A rule is \emph{valid} or \emph{holds} in a structure if each valuation on the structure which validates the premises of the rule also validates its conclusion.

 For example, the rule $\IsTrue(x) ~ \& ~ \IsTrue(y) \vdash x \approx y$ holds in the two-element \mbox{structure} with the universe $\{ \True, \False \}$ where the predicate $\IsTrue$ is interpreted by the singleton set $\{ \True \}$ and the predicate $\approx$ by the equality relation, but it fails in the three-element structure with the universe $\{ \True, \Both, \False \}$ where $\IsTrue$ is interpreted by the set $\{ \True, \Both \}$ and $\approx$ is interpreted by the equality relation.

  The rules defined above are nothing but strict universal Horn sentences without equality in (barely noticeable) disguise, at least if $\Gamma$ is finite. Recall that a \emph{strict Horn formula (without equality)} is a disjunction of finitely many negated atomic formulas and a single atomic formula, consisting of a relational symbol of arity $n$ applied to an $n$-tuple of terms. We do not assume here the \mbox{presence} of a privileged binary relational symbol designated to represent the \mbox{equality} \mbox{relation}: the~binary predicate $\approx$ may be interpreted by any binary relation. A~\emph{strict universal Horn \mbox{sentence} (without equality)} then arises from such a formula by universally quantifying over all variables occurring in the formula. The rule $\IsTrue(x) ~ \& ~ \IsTrue(y) \vdash x \approx y$ thus corresponds to the strict universal Horn sentence (without equality)
\begin{align*}
  \forall x \, \forall y \, (\neg \IsTrue(x) \vee \neg \IsTrue(y) \vee x \approx y).
\end{align*}

  The \emph{logic defined by a structure} is the set of all rules which hold in the structure. The \emph{logic defined by a class of structures} $\class{K}$ is the set of all rules which hold in all structures in $\class{K}$. A \emph{logic (simpliciter)} is a logic defined by some class of structures. (Logics can also be defined intrinsically as sets of rules which satisfy certain closure conditions.) A \emph{model} of a set of rules is a structure which validates all of these rules. Finally, we say that a logic $\logic{L}$ is \emph{axiomatized} by a set of rules if $\logic{L}$ is the smallest logic which contains these rules. Equivalently, $\logic{L}$ is axiomatized by a set of rules if and only if the models of $\logic{L}$ are precisely the structures which validate these rules. A rule \emph{holds (fails)} it a logic if it belongs (does not belong) to the logic.

\begin{fact}
  The class of all models of a logic $\logic{L}$ is closed under substructures and products of structures.
\end{fact}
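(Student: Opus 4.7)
The plan is to reduce the statement to checking that the two closure properties hold rule-by-rule. Since the models of $\logic{L}$ are exactly the structures validating every rule of $\logic{L}$, it suffices to show: if a rule $\Gamma \vdash \varphi$ holds in a structure $\alg{A}$ (respectively, holds in every structure $\alg{A}_{i}$ of a family), then it holds in every substructure of $\alg{A}$ (respectively, in the product $\prod_{i} \alg{A}_{i}$). The intersection of two classes each closed under $\class{S}$ and $\class{P}$ is again closed under these operators, so preservation for individual rules implies preservation for the entire class of models.

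For substructures, I would first fix the relevant convention: a substructure $\alg{B}$ of $\alg{A}$ is a subalgebra $B \subseteq A$ equipped with the restricted relations $\Rrel^{\alg{B}} = \Rrel^{\alg{A}} \cap B^{n}$. Now take any valuation $v$ on $\alg{B}$ validating the premises in $\Gamma$. Composing with the inclusion $B \hookrightarrow A$ gives a valuation $v'$ on $\alg{A}$; because substructures inherit their relations from the ambient structure, $v'$ validates every premise on $\alg{A}$ as well. By hypothesis $v'$ then validates $\varphi$ in $\alg{A}$, so the witnessing tuple of term values lies in $\Rrel^{\alg{A}}$. Since $B$ is closed under the function symbols, these term values lie in $B^{n}$, hence in $\Rrel^{\alg{A}} \cap B^{n} = \Rrel^{\alg{B}}$, giving the conclusion in $\alg{B}$.

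For products, I would use the coordinate-wise definition: on $\prod_{i} \alg{A}_{i}$ the operations act componentwise and $\Rrel^{\prod_{i} \alg{A}_{i}}$ consists of those $n$-tuples whose $i$-th projection lies in $\Rrel^{\alg{A}_{i}}$ for every $i$. A valuation $v$ on the product thus validates a formula $\Rrel(t_{1}, \dots, t_{n})$ if and only if each projected valuation $\pi_{i} \circ v$ validates it in $\alg{A}_{i}$, because homomorphisms commute with term evaluation. If $v$ validates every premise in $\Gamma$, then each $\pi_{i} \circ v$ does too, so by the hypothesis that each $\alg{A}_{i}$ is a model, each $\pi_{i} \circ v$ validates $\varphi$; applying the biconditional in the other direction, $v$ validates $\varphi$ in the product.

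I do not expect a serious obstacle here; the proof is essentially bookkeeping once the right definitions of substructure and product for the many-sorted relational setting are fixed. The only subtlety worth flagging is the choice of substructure convention (inherited relations rather than arbitrary subsets of the restricted relations) and the corresponding coordinate-wise product, both of which are standard in the references \cite{dellunde+jansana96,elgueta94thesis} already cited in the preliminaries.
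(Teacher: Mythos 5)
Your proof is correct and is exactly the standard argument: the paper states this as a background Fact without proof (deferring to the cited literature on universal Horn logic without equality), and your rule-by-rule verification via inherited relations for substructures and coordinatewise relations for products is precisely what that justification amounts to. The one convention you flag---substructures carry the restricted relations and products the full coordinatewise relations---is indeed the intended one, so there is nothing to add.
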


\begin{fact}\label{fact: intersection of filters}
  If the structures $\langle \alg{A}, R^{i}_{1}, \dots, R^{i}_{m} \rangle$ are models of a logic $\logic{L}$ for each $i \in I$, then so is the structure $\langle \alg{A}, R_{1}, \dots, R_{m} \rangle$ for $R_{k} \assign \bigcap_{i \in I} R^{i}_{k}$.
\end{fact}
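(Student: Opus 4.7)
The plan is a direct unfolding of the definitions. Let $\logic{L}$ be the given logic, and let $\alg{B}^i = \langle \alg{A}, R^i_1, \dots, R^i_m \rangle$ for $i \in I$ and $\alg{B} = \langle \alg{A}, R_1, \dots, R_m \rangle$ with $R_k := \bigcap_{i \in I} R^i_k$. I need to show that every rule $\Gamma \vdash \varphi$ of $\logic{L}$ is valid in $\alg{B}$.

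The key point I would exploit is that all of the structures $\alg{B}^i$ and $\alg{B}$ share the same underlying algebra $\alg{A}$, so the notion of valuation is literally the same for all of them: a valuation is just a homomorphism from the term algebra into $\alg{A}$. Thus I would fix a valuation $v$ on $\alg{A}$ and show the implication ``$v$ validates $\Gamma$ in $\alg{B}$ $\Rightarrow$ $v$ validates $\varphi$ in $\alg{B}$.''

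For the first step, I would observe that for any atomic formula $R_k(t_1, \dots, t_n)$, the valuation $v$ validates it in $\alg{B}$ precisely when $\langle t_1^v, \dots, t_n^v \rangle \in R_k = \bigcap_i R^i_k$, which by definition of intersection holds iff $v$ validates $R_k(t_1, \dots, t_n)$ in every $\alg{B}^i$. Applied to each premise in $\Gamma$, this shows that if $v$ validates $\Gamma$ in $\alg{B}$, then $v$ validates $\Gamma$ in every $\alg{B}^i$. Since each $\alg{B}^i$ is a model of $\logic{L}$, the valuation $v$ then validates $\varphi$ in every $\alg{B}^i$, i.e., the relevant tuple lies in $R^i_k$ for every $i$, hence in their intersection $R_k$, which is exactly what it means for $v$ to validate $\varphi$ in $\alg{B}$.

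There is essentially no obstacle here: the proof rests on the trivial set-theoretic observation that $x \in \bigcap_i X_i$ iff $x \in X_i$ for all $i$, together with the fact that validation of an atomic formula by a valuation depends only on whether a specific tuple of algebra elements belongs to the interpretation of the relevant predicate. If one wished to state the fact with finitely many rather than one relational symbol per structure explicit, the argument is unaffected, since the argument proceeds symbol by symbol.
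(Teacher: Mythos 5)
Your proof is correct and is exactly the standard argument that the paper leaves implicit by stating this as a Fact without proof: since all structures share the algebra $\alg{A}$, a valuation validates a premise in the intersection structure iff it does so in every $\langle \alg{A}, R^{i}_{1}, \dots, R^{i}_{m} \rangle$, and the single atomic conclusion then lands in every $R^{i}_{k}$ and hence in $\bigcap_{i \in I} R^{i}_{k}$. You also correctly (if implicitly) rely on the rules being single-conclusion, which is where the argument would break for the multiple-conclusion rules of Section~\ref{sec: mc}.
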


  The structure $\langle \alg{A}, R_{1}, \dots, R_{m} \rangle$ is \emph{trivial} if each $R_{i}$ is the total relation on $\alg{A}$ (if $R_{i} = \alg{A}^{n}$ where $n$ is the arity of $R_{i}$). Every rule holds in a trivial structure.

\subsection{Leibniz reducts}
\label{subsec: leibniz}

  For each set $F \subseteq \alg{A}$ we define the congruence $\Leibniz{\alg{A}}{F} \in \Con \alg{A}$, called the \emph{Leibniz congruence} of $F$, as follows: $\pair{a}{b} \in \Leibniz{\alg{A}}{F}$ if and only if
\begin{align*}
  p(a) \in F \iff p(b) \in F \text { for each polynomial } p \text{ on } \alg{A}.
\end{align*}
  Here a \emph{polynomial} on $\alg{A}$ is a function $p\colon \alg{A} \to \alg{A}$ such that $p(a) = t(a, c_{1}, \dots, c_{n})$ for some term $t(x, z_{1}, \dots, z_{n})$ and some $c_{1}, \dots, c_{n} \in \alg{A}$. It~will be useful to observe that for each family of sets $F_{i} \subseteq \alg{A}$ with $i \in I$ we have
  \begin{align*}
    \bigcap_{i \in I} (\Leibniz{\alg{A}}{F_{i}}) \subseteq \Leibniz{\alg{A}}{\bigcap_{i \in I} F_{i}}.
  \end{align*}
  
    Likewise, for each binary relation $\Rrel \subseteq \alg{A}^{2}$ we define the Leibniz congruence $\Leibniz{\alg{A}}{\Rrel}$ as follows: $\pair{a}{b} \in \Leibniz{\alg{A}}{\Rrel}$ if and only if
\begin{align*}
  \pair{p(a)}{q(a)} \in \Rrel \iff \pair{p(b)}{q(b)} \in \Rrel \text { for all polynomials } p, q \text{ on } \alg{A}.
\end{align*}
  This definition extends in an obvious way to relations of arbitrary finite arity, but in this paper we restrict our attention to structures with unary and binary relations.

  Alternatively, $\Leibniz{\alg{A}}{F}$ may be defined as the largest congruence $\theta \in \Con \alg{A}$ which is compatible with $F$ in the sense that
  \begin{align*}
    a \in F ~ \& ~ \pair{a}{b} \in \theta \implies b \in F.
  \end{align*}
  In particular, if $\theta$ is compatible with $F$, then we define
  \begin{align*}
    F / \theta \assign \set{[a]_{\theta} \in \alg{A} / \theta}{a \in F}.
  \end{align*}
  Similarly, if $\Rrel$ is a binary relation, then $\Leibniz{\alg{A}}{\Rrel}$ may be defined as the largest congruence $\theta \in \Con \alg{A}$ which is compatible with $\Rrel$ in the sense that
  \begin{align*}
    \pair{a}{b} \in \Rrel ~ \& ~ \pair{a}{c}, \pair{b}{d} \in \theta \implies \pair{c}{d} \in \Rrel.
  \end{align*}
  In particular, if $\theta$ is compatible with $\Rrel$, then we define
  \begin{align*}
    \Rrel / \theta \assign \set{\pair{[a]_{\theta}}{[b]_{\theta}}}{\pair{a}{b} \in \Rrel}.
  \end{align*}

\begin{definition}
  The \emph{Leibniz congruence} of a structure $\langle \alg{A}, \Rrel_{1}, \dots, \Rrel_{n} \rangle$ is the congruence $\theta_{L} \assign \Leibniz{\alg{A}}{\Rrel_{1}} \cap \dots \cap \Leibniz{\alg{A}}{\Rrel_{n}}$. The structure is called \emph{reduced} if the Leibniz congruence is the identity relation on $\alg{A}$. The \emph{Leibniz reduct} of $\langle \alg{A}, \Rrel_{1}, \dots, \Rrel_{n} \rangle$ is the structure $\langle \alg{A} / \theta_{L}, \Rrel_{1} / \theta_{L}, \dots, \Rrel_{n} / \theta_{L} \rangle$.
\end{definition}

\begin{fact}\label{fact: leibniz}
  Let $\theta_{L}$ be the Leibniz congruence of $\langle \alg{A}, \Rrel_{1}, \dots, \Rrel_{n} \rangle$ and let $\theta$ be a congruence on $\alg{A}$ such that $\theta \subseteq \theta_{L}$. Then $\langle \alg{A}, \Rrel_{1}, \dots, \Rrel_{n} \rangle$ is a model of a logic $\logic{L}$ if and only if the structure $\langle \alg{A} / \theta, \Rrel_{1} / \theta, \dots, \Rrel_{n} / \theta \rangle$ is a model of $\logic{L}$. Moreover, $\langle \alg{A} / \theta_{L}, \Rrel_{1} / \theta_{L}, \dots, \Rrel_{n} / \theta_{L} \rangle$ is a reduced structure.
\end{fact}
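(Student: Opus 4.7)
The plan is as follows. Since $\theta \subseteq \theta_{L} \subseteq \Leibniz{\alg{A}}{\Rrel_{i}}$ for each $i$, and $\Leibniz{\alg{A}}{\Rrel_{i}}$ is by construction the largest congruence on $\alg{A}$ compatible with $\Rrel_{i}$, the congruence $\theta$ is itself compatible with each $\Rrel_{i}$. Hence every quotient relation $\Rrel_{i}/\theta$ is well-defined and the quotient structure makes sense. Let $\pi\colon \alg{A} \to \alg{A}/\theta$ denote the canonical surjection, which is a surjective algebra homomorphism, so every valuation $v$ on $\alg{A}$ induces a valuation $\pi \circ v$ on the quotient, and conversely every valuation on the quotient lifts to a valuation on $\alg{A}$ by choosing representatives.

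The core step for the first assertion is to verify, for each atomic formula $\Rrel_{i}(t_{1}, \dots, t_{k})$ and each valuation $v$ on $\alg{A}$, that $v$ validates this formula in $\langle \alg{A}, \Rrel_{1}, \dots, \Rrel_{n} \rangle$ if and only if $\pi \circ v$ validates it in the quotient structure. The forward direction is immediate from the definition of $\Rrel_{i}/\theta$. The reverse direction is precisely where compatibility of $\theta$ with $\Rrel_{i}$ is needed: if $\pair{[v(t_{1})]_{\theta}}{[v(t_{2})]_{\theta}} \in \Rrel_{i}/\theta$ (or the analogous unary condition), then by the definition of $\Rrel_{i}/\theta$ there exist representatives in $\Rrel_{i}$ whose $\theta$-classes agree with those of $v(t_{1}), \dots, v(t_{k})$, and compatibility of $\theta$ with $\Rrel_{i}$ promotes this to $\pair{v(t_{1})}{v(t_{2})} \in \Rrel_{i}$. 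Combining this with the surjectivity of $\pi$ gives both directions of the model equivalence.

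For the ``moreover'' part, the plan is to prove a general quotient-commutativity statement for the Leibniz operator: whenever $\theta \subseteq \Leibniz{\alg{A}}{\Rrel}$, one has $\Leibniz{\alg{A}/\theta}{\Rrel/\theta} = \Leibniz{\alg{A}}{\Rrel}/\theta$. This rests on the observation that every polynomial on $\alg{A}/\theta$ is of the form $[p(\cdot)]_{\theta}$ for some polynomial $p$ on $\alg{A}$ (obtained by picking representatives of the parameters), together with the compatibility of $\theta$ with $\Rrel$, which allows one to swap membership in $\Rrel/\theta$ for membership in $\Rrel$ at the level of representatives. Specializing to $\theta = \theta_{L}$ and intersecting over $i$, the Leibniz congruence of the quotient structure becomes
\begin{align*}
  \bigcap_{i=1}^{n} \Leibniz{\alg{A}/\theta_{L}}{\Rrel_{i}/\theta_{L}} = \bigcap_{i=1}^{n} (\Leibniz{\alg{A}}{\Rrel_{i}}/\theta_{L}) = \theta_{L}/\theta_{L},
\end{align*}
which is the identity on $\alg{A}/\theta_{L}$; so the quotient is reduced.

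The main obstacle I anticipate is the quotient-commutativity of the Leibniz operator in the binary case, where one must apply compatibility carefully to both coordinates and simultaneously to both of the polynomials $p$ and $q$ appearing in the definition of $\Leibniz{\alg{A}}{\Rrel}$ for binary $\Rrel$. Once that bookkeeping is done, the remaining steps are routine unwinding of definitions, together with the elementary fact that $\theta_{L} \subseteq \Leibniz{\alg{A}}{\Rrel_{i}}$ ensures that taking quotients by $\theta_{L}$ commutes with the intersection in the final calculation.
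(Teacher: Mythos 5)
Your argument is correct. The paper states this as a standard fact of abstract algebraic logic and gives no proof, but your route — noting that compatibility is inherited by subcongruences of $\theta_{L}$, that validation of formulas is preserved and reflected along the projection $\pi$ (using lifting of valuations and compatibility for the reverse direction), and that the Leibniz operator commutes with quotients by compatible congruences so that the Leibniz congruence of the quotient is $\theta_{L}/\theta_{L}$ — is exactly the standard textbook proof, and the bookkeeping you flag for binary relations goes through as you describe.
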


  In particular, a rule holds in $\logic{L}$ if and only if it holds in each \emph{reduced} model of~$\logic{L}$.

\subsection{De~Morgan lattices}
\label{subsec: dmls}

  We now review some basic facts about De~Morgan lattices. The proofs of these facts can be found in~\cite{pynko99a}.

\begin{definition}[De Morgan lattices]
  A \emph{De~Morgan \mbox{lattice}} is a distributive~lattice with an order-inverting involution, denoted here by $\dmneg x$. That is, De~Morgan \mbox{lattices} form a variety axiomatized by the equations $\dmneg \dmneg x \approx x$ and $\dmneg (x \wedge y) \approx \dmneg x \vee \dmneg y$, or equivalently $\dmneg \dmneg x \approx x$ and $\dmneg (x \vee y) \approx \dmneg x \wedge \dmneg y$. A \emph{Kleene lattice} is a De~Morgan lattice which moreover satisfies $x \wedge \dmneg x \leq y \vee \dmneg y$.
\end{definition}

  Since De~Morgan lattices are expansions of distributive lattices, the following well-known lemma applies to them. Here an ideal $I$ is prime if $a \wedge b \in I$ implies $a \in I$ or $b \in I$, and a filter $F$ is prime if $a \vee b \in F$ implies $a \in F$ or $b \in F$. We~do not require that prime ideals and filters be proper or non-empty.

\begin{lemma}[Pair Extension Lemma]\label{lemma: filter-ideal separation}
  Let $F$ be a filter and $I$ be an ideal on a distributive lattice $\alg{L}$ such that $F \cap I = \emptyset$. Then there is a prime filter $G \supseteq F$ and a prime ideal $J \supseteq I$ such that $G \cap J = \emptyset$.
\end{lemma}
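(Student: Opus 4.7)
The plan is a standard Zorn's lemma argument on pairs of a filter and an ideal. Consider the poset $P$ whose elements are pairs $(G, J)$ such that $G$ is a filter with $F \subseteq G$, $J$ is an ideal with $I \subseteq J$, and $G \cap J = \emptyset$, ordered componentwise by inclusion. The poset is non-empty (it contains $(F, I)$), and for any chain $\{(G_\alpha, J_\alpha)\}$ the pair $(\bigcup_\alpha G_\alpha, \bigcup_\alpha J_\alpha)$ is again in $P$: directed unions of filters are filters, directed unions of ideals are ideals, and disjointness is preserved since any hypothetical element of the intersection would lie in some $G_\alpha \cap J_\alpha$. By Zorn's lemma, fix a maximal element $(G, J) \in P$. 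It remains to show that $G$ is a prime filter and $J$ is a prime ideal.

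I focus on primeness of $G$; the argument for $J$ is dual. If $G$ is empty, it is vacuously prime, so assume $G \neq \emptyset$. Suppose for contradiction that $a \vee b \in G$ but $a \notin G$ and $b \notin G$. Let $G_a$ denote the filter generated by $G \cup \{a\}$, which (since $G$ is non-empty) equals $\{y \in \alg{L} \mid y \geq g \wedge a \text{ for some } g \in G\}$, and similarly define $G_b$. Both $G_a$ and $G_b$ strictly contain $G$, so by maximality of $(G, J)$ we must have $G_a \cap J \neq \emptyset$ and $G_b \cap J \neq \emptyset$. Pick witnesses and use downward closure of $J$ to obtain elements $g_a, g_b \in G$ with $g_a \wedge a \in J$ and $g_b \wedge b \in J$.

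The main (and only slightly subtle) step is to combine these witnesses. Since $J$ is closed under finite joins, $(g_a \wedge a) \vee (g_b \wedge b) \in J$. Using distributivity,
\begin{align*}
  (g_a \wedge a) \vee (g_b \wedge b) \geq (g_a \wedge g_b \wedge a) \vee (g_a \wedge g_b \wedge b) = g_a \wedge g_b \wedge (a \vee b),
\end{align*}
so by downward closure of $J$ we have $g_a \wedge g_b \wedge (a \vee b) \in J$. But $g_a, g_b \in G$ and $a \vee b \in G$, so this element also lies in $G$, contradicting $G \cap J = \emptyset$. Hence $G$ is prime, and a symmetric argument (replacing joins by meets and $a \vee b \in G$ by $a \wedge b \in J$, and generating the ideals $J_a, J_b$ instead) shows that $J$ is prime. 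The main obstacle is really just keeping the ``non-empty filter'' and ``non-empty ideal'' edge cases straight; the rest is a direct computation exploiting distributivity.
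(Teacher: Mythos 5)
Your proof is correct: the Zorn's lemma argument on disjoint filter--ideal pairs, with the distributivity computation $(g_a \wedge a) \vee (g_b \wedge b) \geq (g_a \wedge g_b) \wedge (a \vee b)$ forcing primeness at a maximal pair, is the standard proof of this lemma, and you handle the empty-filter edge case consistently with the paper's convention that prime filters need not be proper or non-empty. The paper itself states this as a well-known fact and gives no proof, so there is nothing to compare against beyond noting that your argument is the expected one.
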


  The three most important De~Morgan lattices are depicted in Figure~\ref{fig: si dmas}. In~these algebras the De~Morgan negation corresponds to reflection across the horizontal axis of symmetry. In particular, the De~Morgan negation of $\Kthree$ has one fixpoint and the De~Morgan negation of $\DMfour$ has two fixpoints. Note that $\DMfour$ is not a Kleene lattice but $\Btwo$ and $\Kthree$ are. The algebra $\Kthree$ may be embedded into~$\DMfour$ in two different ways, namely $\Inter \mapsto \Neither$ and $\Inter \mapsto \Both$. In an abuse of notation, we let $\Kthree$ stand for either of the three isomorphic algebras with universes $\{ \False, \Inter, \True \}$, $\{ \False, \Neither, \True \}$, $\{ \False, \Both, \True \}$.

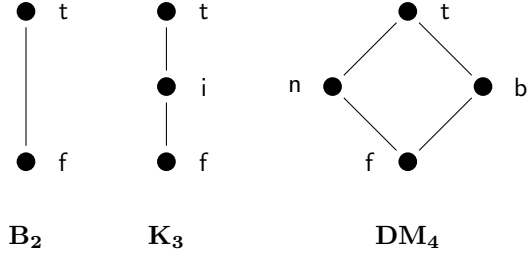
\begin{figure}
\caption{Subdirectly irreducible De~Morgan lattices}
\label{fig: si dmas}

\begin{center}
\begin{tikzpicture}[scale=1,
  dot/.style={circle,fill,inner sep=2.5pt,outer sep=2.5pt}]
  \node (B2a) at (0,0) [dot] {};
  \node (B2b) at (0,2) [dot] {};
  \node at (0.5,0) {$\False$};
  \node at (0.5,2) {$\True$};
  \draw[-] (B2a) -- (B2b);
  \node at (0,-1) {$\Btwo$};
\end{tikzpicture}
\qquad
\begin{tikzpicture}[scale=1,
  dot/.style={circle,fill,inner sep=2.5pt,outer sep=2.5pt}]
  \node (K3a) at (0,0) [dot] {};
  \node (K3b) at (0,1) [dot] {};
  \node (K3c) at (0,2) [dot] {};
  \node at (0.5,0) {$\False$};
  \node at (0.5,1) {$\Inter$};
  \node at (0.5,2) {$\True$};
  \draw[-] (K3a) -- (K3b) -- (K3c);
  \node at (0,-1) {$\Kthree$};
\end{tikzpicture}
\qquad
\begin{tikzpicture}[scale=1,
  dot/.style={circle,fill,inner sep=2.5pt,outer sep=2.5pt}]
  \node (DM4a) at (0,0) [dot] {};
  \node (DM4b) at (-1,1) [dot] {};
  \node (DM4c) at (1,1) [dot] {};
  \node (DM4d) at (0,2) [dot] {};
  \node at (0.5,2) {$\True$};
  \node at (-0.5,0) {$\False$};
  \node at (1.5,1) {$\Both$};
  \node at (-1.5,1) {$\Neither$};
  \draw[-] (DM4a) edge (DM4b);
  \draw[-] (DM4a) edge (DM4c);
  \draw[-] (DM4b) edge (DM4d);
  \draw[-] (DM4c) edge (DM4d);
  \node at (0,-1) {$\DMfour$};
\end{tikzpicture}
\end{center}
\end{figure}

\begin{fact}\label{fact: hf}
  Let $\alg{A}$ be a De~Morgan lattice and $T$ be a prime filter on $\alg{A}$. Then the map $h_{T}\colon \alg{A} \to \DMfour$ defined as
  \begin{align*}
    h_{T}(a) & \assign \True \text{ if } a \in T \text{ and } \dmneg a \notin T, \\
    h_{T}(a) & \assign \Both \text{ if } a \in T \text{ and } \dmneg a \in T, \\
    h_{T}(a) & \assign \Neither \text{ if } a \notin T \text{ and } \dmneg a \notin T, \\
    h_{T}(a) & \assign \False \text{ if } a \notin T \text{ and } \dmneg a \in T,
  \end{align*}
  is a homomorphism of De~Morgan lattices.
\end{fact}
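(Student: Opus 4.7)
The plan is to recognize that $\DMfour$, as a De~Morgan lattice, is isomorphic to the square $\Btwo \times \Btwo$ (with componentwise meet and join, and De~Morgan negation acting as $\dmneg \pair{x}{y} = \pair{\dmneg y}{\dmneg x}$) via $\True \mapsto \pair{1}{1}$, $\Both \mapsto \pair{1}{0}$, $\Neither \mapsto \pair{0}{1}$, $\False \mapsto \pair{0}{0}$. Under this identification, $h_{T}(a)$ becomes the pair $\pair{\alpha(a)}{\beta(a)}$, where $\alpha(a) = 1$ iff $a \in T$, and $\beta(a) = 1$ iff $\dmneg a \notin T$. It therefore suffices to show that $\alpha$ and $\beta$ are lattice homomorphisms $\alg{A} \to \Btwo$ and that $\dmneg$ interacts with them in the right way.

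That $\alpha$ is a lattice homomorphism is exactly the content of $T$ being a prime filter: $a \wedge b \in T$ iff $a \in T$ and $b \in T$ (filter property), while $a \vee b \in T$ iff $a \in T$ or $b \in T$ (primeness). This step is immediate from the definitions.

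For $\beta$ I will use the De~Morgan laws $\dmneg(a \wedge b) = \dmneg a \vee \dmneg b$ and $\dmneg(a \vee b) = \dmneg a \wedge \dmneg b$ to reduce the computation again to the filter and prime filter properties of $T$. For instance, $\dmneg(a \wedge b) \notin T$ iff $\dmneg a \vee \dmneg b \notin T$ iff $\dmneg a \notin T$ and $\dmneg b \notin T$ (by primeness), which is precisely $\beta(a) \wedge \beta(b) = 1$; the join case is dual and uses the filter property on $\dmneg a \wedge \dmneg b$.

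Finally, preservation of De~Morgan negation is immediate from the pair form: passing from $a$ to $\dmneg a$ swaps the conditions ``$a \in T$'' and ``$\dmneg a \in T$'', so $h_{T}(\dmneg a) = \pair{\alpha(\dmneg a)}{\beta(\dmneg a)} = \pair{\dmneg \beta(a)}{\dmneg \alpha(a)} = \dmneg h_{T}(a)$ in $\Btwo \times \Btwo$. None of these steps poses a real obstacle; the only care required is to set up the $\Btwo \times \Btwo$ identification with the correct orientation so that the four cases in the definition of $h_{T}$ line up with the intended values in $\DMfour$.
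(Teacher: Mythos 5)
Your proof is correct. The paper itself states this fact without proof (deferring to Pynko's paper on implicational classes of De~Morgan lattices), so there is no in-text argument to compare against; your twist-structure decomposition $\DMfour \iso \Btwo \times \Btwo$ with $\dmneg\pair{x}{y} = \pair{\dmneg y}{\dmneg x}$ is the standard way to verify it, and every step checks out: the orientation of the identification matches the four cases of $h_{T}$, the claim that $\alpha$ is a lattice homomorphism uses exactly the filter and primeness conditions, the computation for $\beta$ correctly routes through the De~Morgan laws back to those same conditions, and the negation clause uses the involution law $\dmneg\dmneg a \approx a$. One small point worth noting, since the paper explicitly allows prime filters to be empty or improper: in the degenerate cases $T = \emptyset$ and $T = \alg{A}$ the map $h_{T}$ is constantly $\Neither$ or constantly $\Both$, and your argument still goes through unchanged because all the biconditionals you invoke hold vacuously and each singleton $\{\Neither\}$, $\{\Both\}$ is a subalgebra of $\DMfour$.
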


  This observation combined with the Pair Extension Lemma yields the following.

\begin{fact}\label{fact: si dmas}
  Each De~Morgan lattice is a subdirect product of $\Btwo$, $\Kthree$, and $\DMfour$.
\end{fact}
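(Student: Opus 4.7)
The plan is to combine Fact~\ref{fact: hf} with the Pair Extension Lemma to show that the homomorphisms $h_T$ indexed by prime filters $T$ on $\alg{A}$ together separate points of $\alg{A}$, and then to observe that the images of these homomorphisms are subalgebras of $\DMfour$, each of which is (up to isomorphism) one of $\Btwo$, $\Kthree$, $\DMfour$, or a trivial algebra.

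The key separation step goes as follows. Given $a \neq b$ in $\alg{A}$, by antisymmetry we may assume without loss of generality that $a \not\leq b$. Then the principal filter $F \assign \{x \in \alg{A} \mid a \leq x\}$ and the principal ideal $I \assign \{x \in \alg{A} \mid x \leq b\}$ are disjoint: any element of $F \cap I$ would witness $a \leq b$. Lemma~\ref{lemma: filter-ideal separation} supplies a prime filter $T \supseteq F$ and a prime ideal $J \supseteq I$ with $T \cap J = \emptyset$, so $a \in T$ while $b \notin T$. By the definition of $h_T$ in Fact~\ref{fact: hf} we then get $h_T(a) \in \{\True, \Both\}$ and $h_T(b) \in \{\Neither, \False\}$, hence $h_T(a) \neq h_T(b)$.

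Letting $T$ range over all prime filters of $\alg{A}$, the product map $h\colon \alg{A} \to \prod_{T} \DMfour$ defined by $h(a)_{T} \assign h_{T}(a)$ is a homomorphism of De~Morgan lattices (each coordinate is), and injective by the previous paragraph. Replacing each coordinate $\DMfour$ by the image $h_{T}[\alg{A}]$ turns $h$ into a subdirect embedding. It remains to enumerate the subalgebras of $\DMfour$: since $\dmneg$ fixes both $\Neither$ and $\Both$, the nontrivial subalgebras are exactly $\{\True, \False\} \iso \Btwo$, $\{\True, \False, \Neither\} \iso \Kthree$, $\{\True, \False, \Both\} \iso \Kthree$, and $\DMfour$ itself; the trivial subalgebras $\{\Neither\}$ and $\{\Both\}$ can be discarded from the product without affecting injectivity, since each contributes no information. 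This yields the desired subdirect decomposition.

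The only point that needs any work is the separation step, and even there the only idea is to convert the failure $a \not\leq b$ into a pair of disjoint principal filter and principal ideal so that the Pair Extension Lemma becomes applicable; the rest is bookkeeping about subalgebras of $\DMfour$.
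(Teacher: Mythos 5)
Your proof is correct and follows exactly the route the paper intends: it sketches this fact with the remark that Fact~\ref{fact: hf} ``combined with the Pair Extension Lemma yields the following,'' and your argument---separating $a \not\leq b$ by a prime filter obtained from the disjoint principal filter $\uparrow a$ and principal ideal $\downarrow b$, then using the maps $h_T$ as the subdirect factors and classifying the subalgebras of $\DMfour$---is precisely that argument spelled out. No gaps; the only cosmetic point is that the prime filters produced by your separation step are automatically proper and non-empty, so the trivial one-element images you discard never actually arise among the coordinates you need.
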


  In particular, the variety of De~Morgan lattices is generated as a quasivariety by~$\DMfour$. Now consider instead of the De~Morgan lattice $\DMfour$ its expansion by some of the constants $\True, \Neither, \Both$, interpreted of course by the corresponding elements of~$\DMfour$. (We can disregard the constant $\False$, since $\False = \dmneg \True$.) We~shall use $\True$ to denote the top element of a De~Morgan lattice whenever it exists.

  To axiomatize the quasivariety generated by such an expansion of $\DMfour$, it suffices to add all of the applicable equations from the following list:
  \begin{align*}
    \True & \approx x \vee \True, &
    \Neither & \approx \dmneg \Neither, &
    \Both & \approx \dmneg \Both, &
    \Neither \vee \Both & \approx x \vee \Neither \vee \Both.
  \end{align*}
  These equations ensure that in each factor in a subdirect composition of a given De~Morgan lattice, $\True$ is the top element and $\Neither$ and $\Both$ are fixpoints of De~Morgan \mbox{negation}. Moreover, if both of the constants $\Neither$ and $\Both$ are present in the signature, then they are Boolean complements in each subdirect factor, hence each subdirect factor is isomorphic to $\DMfour$ and $\Neither$~and~$\Both$ are distinct fixpoints in it. To obtain a subdirect decomposition in the signature which includes these constants, it suffices to apply the isomorphism of $\DMfour$ which exchanges $\Neither$ and $\Both$ to some of the factors.

\subsection{Belnap--Dunn logic}
\label{subsec: bd}

  Finally, we review some facts about Belnap--Dunn logic and its extensions. The proofs of most these facts may be found in the paper~\cite{font97}, which was the first thorough analysis of this logic from the perspective of abstract algebraic logic. The~remaining proofs can be found in the papers~\cite{pietz+rivieccio13,rivieccio12,pynko95b}.
  
  Let us first properly define Belnap--Dunn logic $\BD$, the strong three-valued Kleene logic $\K$, the Logic of Paradox~$\LP$, and Exactly True Logic~$\ETL$:
\begin{align*}
  \BD & \text{ is the logic defined by } \pair{\DMfour}{\{ \True, \Both \}}, \\
  \ETL & \text{ is the logic defined by } \pair{\DMfour}{\{ \True \}}, \\
  \K & \text{ is the logic defined by } \pair{\Kthree}{\{ \True \}}, \\
  \LP & \text{ is the logic defined by } \pair{\Kthree}{\{ \True, \Inter \}},
\end{align*}
  where we take the relational signature to be $\{ \IsTrue \}$. The structures which define $\K$ and $\LP$ are substructures of $\pair{\DMfour}{\{ \True, \Both \}}$, therefore $\K$ and $\LP$ are extensions of $\BD$. The logic $\ETL$ is also an extension of $\BD$: since the isomorphic structures $\pair{\DMfour}{\{ \True, \Both \}}$ and $\pair{\DMfour}{\{ \True, \Neither \}}$ are models of $\BD$, so is $\pair{\DMfour}{\{ \True, \Both \} \cap \{ \True, \Neither \}}$.

  Observe that in a relational signature consisting of a single unary predicate, we may disregard the distinction between terms and formulas, identifying the formula $\IsTrue(t)$ directly with the term $t$. Using this convention, $\K$ is the extension of~$\BD$ by the rule $(x \wedge \dmneg x) \vee y \vdash y$, and $\LP$ is the extension of $\BD$ by the rule (axiom) $\emptyset \vdash x \vee \dmneg x$. Finally, $\ETL$ is the extension of $\BD$ by the rule $x \wedge (\dmneg x \vee y) \vdash y$.

  Each reduced model of $\BD$ is a De~Morgan lattice equipped with a lattice filter. Conversely, each De~Morgan lattice with a lattice filter is a model of~$\BD$, although it need not be a reduced model.

\begin{fact}\label{fact: models of bd}
  Let $\pair{\alg{A}}{T}$ be a non-singleton reduced model of the logic~$\logic{L}$.
\begin{enumerate}
\item[(i)] If $\logic{L} =\BD$, then $\alg{A}$ is a De~Morgan lattice and $T$ is a lattice filter on $\alg{A}$.
\item[(ii)] If $\logic{L} = \ETL$, then $\alg{A}$ is a De~Morgan lattice and $T = \{ \True \}$.
\item[(iii)] If $\logic{L} = \LP$, then $\alg{A}$ is a Kleene lattice and $T$ is a lattice filter on $\alg{A}$.
\item[(iv)] If $\logic{L} = \K$, then $\alg{A}$ is a Kleene lattice and $T = \{ \True \}$.
\end{enumerate}
\end{fact}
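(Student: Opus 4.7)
The plan is to establish (i) first and deduce (ii)--(iv) from it, in each case combining the logic's axioms with the reducedness of the model via the Leibniz congruence.

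For (i), every equation $s \approx t$ valid in $\DMfour$ yields two $\BD$-rules $s \vdash t$ and $t \vdash s$; applied with every polynomial on $\alg{A}$ and every valuation, they force $\pair{s^{\alg{A}}(\vec{a})}{t^{\alg{A}}(\vec{a})} \in \Leibniz{\alg{A}}{T}$, which collapses by reducedness. So $\alg{A}$ satisfies every equation of $\DMfour$, and is therefore a De~Morgan lattice. The $\BD$-rules $x \mathbin{\&} y \vdash x \wedge y$ and $x \vdash x \vee y$, combined with the order characterization $a \inequals b \iff a \vee b = b$, then show that $T$ is a lattice filter.

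For (iii), the $\LP$-axiom gives $x \vee \dmneg x \in T$ for every $x$, so for every prime filter $P \supseteq T$ of $\alg{A}$ we have $h_{P}(x) \vee \dmneg h_{P}(x) \in \{\True, \Both\}$, ruling out $h_{P}(x) = \Neither$. The Pair Extension Lemma combined with reducedness ensures that the family of such $h_{P}$ separates points of $\alg{A}$; hence $\alg{A}$ embeds subdirectly into copies of the Kleene subalgebra $\{\False, \Both, \True\} \cong \Kthree$ of $\DMfour$, making $\alg{A}$ a Kleene lattice. For (iv), the $\K$-rule implies the $\ETL$-rule by distributivity (using $x \wedge (\dmneg x \vee y) = (x \wedge \dmneg x) \vee (x \wedge y)$), so (ii) yields $T = \{\True\}$; and since the $\K$-rule forces any non-empty filter on $\DMfour$ to be total (taking $x = \Neither$, $y = \Both$ shows $\True \in T$ implies $\Both \in T$, and similarly $\Neither \in T$, whence $T = \DMfour$ by filter closure), a subdirect-factor analysis analogous to (iii) excludes any $\DMfour$-factor, making $\alg{A}$ Kleene.

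Part (ii) is the main obstacle. The $\ETL$-rule on a lattice filter is equivalent to the disjunctive syllogism $a, \dmneg a \vee b \in T \Rightarrow b \in T$; non-singleton reducedness forces $T \neq \alg{A}$, so $a \in T$ implies $\dmneg a \notin T$. The $T$-compatible lattice congruence $\{\pair{u}{v} : \dmneg a \vee u = \dmneg a \vee v\}$ encoding the disjunctive syllogism is not a De~Morgan congruence, while the De~Morgan congruence $\{\pair{u}{v} : \dmneg a \vee u = \dmneg a \vee v \mathbin{\&} a \wedge u = a \wedge v\}$ is $T$-compatible via the filter property alone and reducedness collapses it to the diagonal, yielding only $b \wedge \dmneg a \inequals a$ for every $b$. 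To bridge this gap I would use the prime representation of Fact~\ref{fact: hf}: the disjunctive syllogism together with reducedness forces $\dmneg a \notin P$ for every proper prime $P \supseteq T$ and every $a \in T$, so $h_{P}(a) = \True$ in each such factor. Since the $h_{P}$ separate points, $a$ must be the top of $\alg{A}$, hence $T \subseteq \{\True\}$; combined with $T \neq \emptyset$, this gives $T = \{\True\}$.
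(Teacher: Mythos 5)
Your parts (i) and (iii) are sound: the equational collapse through $\Leibniz{\alg{A}}{T}$ correctly yields the De~Morgan identities, and for $\LP$ the premise-free axiom $\emptyset \vdash x \vee \dmneg x$ really does propagate from $T$ to every prime filter $P \supseteq T$, so each $h_{P}$ omits $\Neither$. (For the record, the paper does not prove this Fact but cites Font and Pietz--Rivieccio/Pynko; the relevant technique does appear in its proof of Theorem~\ref{thm: bde}.) Part (ii), which you rightly call the crux, rests entirely on the assertion that ``the disjunctive syllogism together with reducedness forces $\dmneg a \notin P$ for every proper prime $P \supseteq T$,'' and you give no argument for it. What you have established at that point ($\dmneg a \leq a$ and $\dmneg a \notin T$ for $a \in T$) does not deliver it: the disjunctive syllogism is a closure condition on $T$ and is silent about a strictly larger prime $P$. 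The assertion is true, but only because $T = \{\True\}$ --- exactly what it is being used to prove. Two honest repairs: (a) show via Fact~\ref{fact: leibniz of bd} that $\pair{\dmneg a \vee c}{c} \in \Leibniz{\alg{A}}{T}$ for every $c$ (both directions of both equivalences follow from the filter property and the syllogism), so $\dmneg a$ is the bottom element; or (b) --- the route the paper takes for the filter $E$ in Theorem~\ref{thm: bde} --- show directly that any $a, b \in T$ satisfy $\pair{a}{b} \in \Leibniz{\alg{A}}{T}$, since $\dmneg a \vee c \in T$ forces $c \in T$ and hence $\dmneg b \vee c \in T$; reducedness then makes $T$ a singleton, which as an up-set must be $\{\True\}$. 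If you insist on primes, you must instead invoke Lemma~\ref{lemma: filter-ideal separation} to produce enough primes $P \supseteq T$ \emph{disjoint from} $\dmneg[T]$, which requires checking that the ideal generated by $\dmneg[T]$ and any $c \notin T$ misses $T$ --- again precisely where the syllogism enters. None of this is in your sketch.

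In (iv), the reduction of $T = \{\True\}$ to (ii) via distributivity is fine, but the ``subdirect-factor analysis analogous to (iii)'' does not transfer. The $\LP$ axiom has no premises, so its content ($a \vee \dmneg a \in T$) passes upward to every $P \supseteq T$; the $\K$-rule has a premise, so it constrains $T$ only, and the factor $\pair{h_{P}[\alg{A}]}{h_{P}[P]}$ is a model of $\BD$ but in general not of $\K$. Hence your (correct) observation that a non-empty $\K$-filter on $\DMfour$ must be total simply does not apply to these factors. Concretely, $h_{P}(u) = \Neither$ and $h_{P}(v) = \Both$ only give $(u \wedge \dmneg u) \vee (v \wedge \dmneg v) \in P$, not ${}= \True$, so the $\K$-rule yields no contradiction. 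The fix is the same device as your part (i), applied to $\Kthree$ instead of $\DMfour$: the Kleene identity $(x \wedge \dmneg x) \wedge (y \vee \dmneg y) \approx x \wedge \dmneg x$ holds in $\Kthree$, so for every term $p$ the rule $p((x \wedge \dmneg x) \wedge (y \vee \dmneg y)) \vdash p(x \wedge \dmneg x)$ and its converse hold in $\K$ (and in $\LP$); reducedness then forces the corresponding pairs into the diagonal, and since $\Kthree$ generates the variety of Kleene lattices, $\alg{A}$ is Kleene. This argument handles (iii) and (iv) uniformly and would let you drop the factor analysis altogether.
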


\begin{fact}\label{fact: leibniz of bd}
  Let $\pair{\alg{A}}{T}$ be a model of~$\BD$. Then $\pair{a}{b} \in \Leibniz{\alg{A}}{T}$ if and only if
\begin{align*}
  (a \vee c \in T \iff b \vee c \in T) \text{ and } (\dmneg a \vee c \in T \iff \dmneg b \vee c \in T) \text{ for all } c \in \alg{A}.
\end{align*}
\end{fact}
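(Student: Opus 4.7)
The plan is to split into two directions. The forward direction is essentially free: if $\pair{a}{b} \in \Leibniz{\alg{A}}{T}$, then applying the defining condition of the Leibniz congruence to the unary polynomials $p(x) = x \vee c$ and $p(x) = \dmneg x \vee c$ immediately yields the displayed biconditionals for each $c \in \alg{A}$.

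For the converse, I need to show that under the displayed hypothesis, $p(a) \in T \iff p(b) \in T$ for every unary polynomial $p(x) = t(x, c_{1}, \dots, c_{n})$. The strategy is to reduce $t$ to a normal form. First, using the De~Morgan laws and double negation, I would push every negation in $t$ down to an atomic subterm, so that the only negated subterms appearing are $\dmneg x$ and the $\dmneg c_{i}$, the latter of which are again constants in $\alg{A}$. Then by distributing $\wedge$ over $\vee$, I put $t$ into conjunctive normal form. Since $\pair{\alg{A}}{T}$ is a model of $\BD$, the rules defining a lattice filter hold in $\pair{\alg{A}}{T}$, so $T$ is a lattice filter on the De~Morgan lattice $\alg{A}$; hence $t(a) \in T$ if and only if every CNF-conjunct evaluated at $a$ lies in $T$, and likewise for $b$. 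Thus it suffices to handle each conjunct separately.

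After collapsing repeated occurrences of $x$ and $\dmneg x$ via idempotence and merging all constant disjuncts into a single $d \in \alg{A}$ via $\vee$, each conjunct falls into one of four shapes: $d$, $x \vee d$, $\dmneg x \vee d$, or $x \vee \dmneg x \vee d$. The first is trivial, and the next two follow at once from the respective hypotheses taking $c = d$. The main obstacle is the mixed case $x \vee \dmneg x \vee d$, where $a$ has to be swapped for $b$ in two independent positions. Here I would chain the two hypotheses: the first applied with $c = \dmneg a \vee d$ replaces $a$ in the positive position, yielding $a \vee \dmneg a \vee d \in T \iff b \vee \dmneg a \vee d \in T$; the second applied with $c = b \vee d$ then replaces $a$ in the negative position, yielding $b \vee \dmneg a \vee d \in T \iff b \vee \dmneg b \vee d \in T$. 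Composing these biconditionals produces $a \vee \dmneg a \vee d \in T \iff b \vee \dmneg b \vee d \in T$, which closes the argument and establishes $\pair{a}{b} \in \Leibniz{\alg{A}}{T}$.
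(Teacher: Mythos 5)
The paper does not prove this fact itself---it is imported from Font's 1997 study of Belnap--Dunn logic---so there is no internal proof to compare against. Judged on its own terms, your argument is the standard one and is essentially correct: the forward direction via the polynomials $x \vee c$ and $\dmneg x \vee c$ is immediate, and the key move in the converse---passing through the hybrid element $b \vee \dmneg a \vee d$ so as to swap $a$ for $b$ first in the positive and then in the negated occurrence---is exactly the right idea.

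Two points need tightening. First, a model of $\BD$ need not have a De~Morgan lattice as its underlying algebra (only its Leibniz reduct is one; see the remark preceding Fact~\ref{fact: models of bd}), so you cannot literally apply the De~Morgan laws, distributivity, and idempotence inside $\alg{A}$. The repair is cheap: whenever two terms $t, t'$ are equal as De~Morgan lattice terms, the rules $\IsTrue(t) \vdash \IsTrue(t')$ and $\IsTrue(t') \vdash \IsTrue(t)$ hold in $\BD$ and hence in the model $\pair{\alg{A}}{T}$, so every rewrite you perform preserves membership in $T$ even when it does not preserve the element of $\alg{A}$; likewise, splitting a conjunction into its conjuncts uses the $\BD$-rules for $\wedge$ rather than the assumption that $T$ is literally a lattice filter. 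Second, your list of normal forms presumes each conjunct contains a constant disjunct $d$, but a conjunct may consist of literals in $x$ alone ($x$, $\dmneg x$, or $x \vee \dmneg x$), and since a De~Morgan lattice need not have a bottom element you cannot insert a dummy $d$. These cases are handled by the same devices: for the conjunct $x$, instantiating the first hypothesis at $c \assign a$ and at $c \assign b$ gives $a \in T \iff a \vee b \in T \iff b \in T$; the conjunct $\dmneg x$ is symmetric; and $x \vee \dmneg x$ succumbs to your chaining argument with $d$ omitted. With these repairs the proof is complete.
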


  The following two useful facts now follow from the above description of Leibniz congruences of models of $\BD$.

\begin{fact}\label{fact: prime leibniz reduct}
  Let $T$ be a prime filter on a De~Morgan lattice $\alg{A}$. Then the Leibniz reduct of the structure $\pair{\alg{A}}{T}$ is isomorphic to a substructure of $\pair{\DMfour}{\{ \True, \Both \}}$.
\end{fact}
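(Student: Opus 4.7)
The plan is to embed the Leibniz reduct of $\pair{\alg{A}}{T}$ into $\pair{\DMfour}{\{\True, \Both\}}$ by means of the map $h_{T} \colon \alg{A} \to \DMfour$ provided by Fact~\ref{fact: hf}. The heart of the argument is to identify the kernel of $h_{T}$ with $\Leibniz{\alg{A}}{T}$; once this is done, the first isomorphism theorem realizes $\alg{A} / \Leibniz{\alg{A}}{T}$ as a De~Morgan subalgebra of $\DMfour$, and the definition of $h_{T}$ gives $h_{T}^{-1}[\{\True, \Both\}] = T$ directly, so the designated set $T / \Leibniz{\alg{A}}{T}$ is carried to $h_{T}[\alg{A}] \cap \{\True, \Both\}$, which is exactly the trace of the designated set of $\pair{\DMfour}{\{\True, \Both\}}$ on the subalgebra $h_{T}[\alg{A}]$.

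The inclusion $\Leibniz{\alg{A}}{T} \subseteq \ker h_{T}$ is the easy half. If $\pair{a}{b} \in \Leibniz{\alg{A}}{T}$, then compatibility of the Leibniz congruence with $T$ yields $a \in T \iff b \in T$; since $\Leibniz{\alg{A}}{T}$ is a congruence, we also have $\pair{\dmneg a}{\dmneg b} \in \Leibniz{\alg{A}}{T}$, and compatibility again gives $\dmneg a \in T \iff \dmneg b \in T$. By the definition of $h_{T}$ in Fact~\ref{fact: hf}, these two equivalences force $h_{T}(a) = h_{T}(b)$.

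The reverse inclusion $\ker h_{T} \subseteq \Leibniz{\alg{A}}{T}$ is where primeness of $T$ enters in an essential way, and I expect this to be the main obstacle. Assume $h_{T}(a) = h_{T}(b)$, so $a \in T \iff b \in T$ and $\dmneg a \in T \iff \dmneg b \in T$. For any $c \in \alg{A}$, the filter and prime filter conditions together give $a \vee c \in T$ iff $a \in T$ or $c \in T$, and analogously for $b$, whence $a \vee c \in T \iff b \vee c \in T$. The same reasoning applied to $\dmneg a$ and $\dmneg b$ yields $\dmneg a \vee c \in T \iff \dmneg b \vee c \in T$. Invoking the characterization of $\Leibniz{\alg{A}}{T}$ from Fact~\ref{fact: leibniz of bd} then places $\pair{a}{b}$ in $\Leibniz{\alg{A}}{T}$.

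Combining the two inclusions, $h_{T}$ factors through a De~Morgan lattice embedding $\bar{h}_{T} \colon \alg{A} / \Leibniz{\alg{A}}{T} \to \DMfour$, and this embedding witnesses that $\pair{\alg{A} / \Leibniz{\alg{A}}{T}}{T / \Leibniz{\alg{A}}{T}}$ is isomorphic to a substructure of $\pair{\DMfour}{\{\True, \Both\}}$, as claimed.
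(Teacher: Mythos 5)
Your proposal is correct and follows essentially the same route as the paper: both identify $\ker h_{T}$ with $\Leibniz{\alg{A}}{T}$ via the prime-filter restatement of Fact~\ref{fact: leibniz of bd} and then read off the substructure embedding from $h_{T}^{-1}[\{\True,\Both\}] = T$. You merely spell out the two inclusions (using compatibility for one direction and primeness for the other) that the paper compresses into a single sentence.
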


\begin{proof}
  Fact~\ref{fact: leibniz of bd} can be restated as follows for $T$ prime: $\pair{a}{b} \in \Leibniz{\alg{A}}{T}$ if and only~if
  \begin{align*}
    (a \in T \iff b \in T) \text{ and } (\dmneg a \in T \iff \dmneg b \in T).
  \end{align*}
  But then the kernel of the homomorphism $h_{T}$ (recall Fact~\ref{fact: hf}) is precisely $\Leibniz{\alg{A}}{T}$, and moreover $h_{T}(a) \in \{ \True, \Both \}$ if and only if $a \in T$. The~Leibniz reduct of $\pair{\alg{A}}{T}$ is therefore isomorphic to a substructure of~$\pair{\DMfour}{\{ \True, \Both \}}$.
\end{proof}

\begin{fact}\label{fact: prime leibniz reduct 2}
  Let $T$ and $NF$ be prime filters on a De~Morgan lattice $\alg{A}$ such that $NF = \alg{A} \setminus \dmneg [T]$, where $\dmneg [T] \assign \set{\dmneg a}{a \in T}$. Then $\Leibniz{\alg{A}}{T} = \Leibniz{\alg{A}}{NF}$ and the Leibniz reduct of $\triple{\alg{A}}{T}{NF}$ is isomorphic to a substructure of $\triple{\DMfour}{\{ \True, \Both \}}{\{ \True, \Neither \}}$.
\end{fact}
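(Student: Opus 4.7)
The plan is to leverage the homomorphism $h_T$ from Fact~\ref{fact: hf} and the characterization of $\Leibniz{\alg{A}}{T}$ for prime filters (established in the proof of Fact~\ref{fact: prime leibniz reduct}), and just verify that the second predicate $NF$ is compatible with the same quotient.

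First I would show that $\Leibniz{\alg{A}}{T} = \Leibniz{\alg{A}}{NF}$. Since both $T$ and $NF$ are prime filters on $\alg{A}$, both $\pair{\alg{A}}{T}$ and $\pair{\alg{A}}{NF}$ are models of $\BD$ by Fact~\ref{fact: models of bd}. Applying Fact~\ref{fact: leibniz of bd} to each and simplifying for prime filters (as in the proof of Fact~\ref{fact: prime leibniz reduct}), $\pair{a}{b} \in \Leibniz{\alg{A}}{T}$ iff $(a \in T \iff b \in T)$ and $(\dmneg a \in T \iff \dmneg b \in T)$, and similarly with $NF$ in place of $T$. The key observation is that the hypothesis $NF = \alg{A} \setminus \dmneg[T]$ is equivalent to the biconditional $a \in NF \iff \dmneg a \notin T$ for all $a \in \alg{A}$. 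Substituting this into the description of $\Leibniz{\alg{A}}{NF}$ and using $\dmneg \dmneg a = a$ turns the two clauses for $NF$ into the two clauses for $T$ (just in swapped order), so the two Leibniz congruences coincide. It follows that the Leibniz congruence $\theta_L$ of the three-sorted structure $\triple{\alg{A}}{T}{NF}$ equals $\Leibniz{\alg{A}}{T}$.

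Next I would observe that $h_T$ from Fact~\ref{fact: hf} does the job for both relations simultaneously. The proof of Fact~\ref{fact: prime leibniz reduct} already gives that the kernel of $h_T$ equals $\Leibniz{\alg{A}}{T} = \theta_L$ and that $h_T(a) \in \{\True, \Both\}$ iff $a \in T$. For the second predicate, reading off the definition of $h_T$ gives $h_T(a) \in \{\True, \Neither\}$ iff $\dmneg a \notin T$, which by the hypothesis on $NF$ is precisely $a \in NF$. Hence $h_T$ descends to a structural embedding of the Leibniz reduct $\triple{\alg{A}/\theta_L}{T/\theta_L}{NF/\theta_L}$ into $\triple{\DMfour}{\{\True, \Both\}}{\{\True, \Neither\}}$, as required.

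There is no real obstacle here: once the previous fact is in hand, the only new point to check is that $NF$ and $T$ generate the same Leibniz congruence, and this is a direct consequence of the symmetry between the two predicates expressed by $NF = \alg{A} \setminus \dmneg[T]$. The mildly finicky step is just making sure the biconditionals line up correctly when substituting $a \in NF \iff \dmneg a \notin T$ into the characterization of $\Leibniz{\alg{A}}{NF}$.
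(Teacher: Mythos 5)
Your proposal is correct and follows essentially the same route as the paper: both reduce the claim $\Leibniz{\alg{A}}{T} = \Leibniz{\alg{A}}{NF}$ to the prime-filter form of Fact~\ref{fact: leibniz of bd} via the biconditional $a \in NF \iff \dmneg a \notin T$, and both use the homomorphism $h_{T}$ of Fact~\ref{fact: hf}, whose kernel is $\Leibniz{\alg{A}}{T}$ and which sends $T$ into $\{ \True, \Both \}$ and $NF$ into $\{ \True, \Neither \}$, to embed the Leibniz reduct into $\triple{\DMfour}{\{ \True, \Both \}}{\{ \True, \Neither \}}$. The only difference is that you spell out the substitution argument that the paper leaves implicit.
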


\begin{proof}
  As in Fact~\ref{fact: leibniz of bd}, $\Leibniz{\alg{A}}{T}$ is the kernel of $h_{T}$, and moreover $h_{T}(a) \in \{ \True, \Both \}$ if and only if $a \in T$. Since $NF = \alg{A} \setminus \dmneg[T]$, we have $\Leibniz{\alg{A}}{NF} = \Leibniz{\alg{A}}{T}$ by Fact~\ref{fact: leibniz of bd}, and moreover $h_{T}(a) \in \{ \True, \Neither \}$ if and only if $a \in NF$. The Leibniz reduct of $\triple{\alg{A}}{T}{NF}$ is therefore isomorphic to a substructure of $\triple{\DMfour}{\{ \True, \Both \}}{\{ \True, \Neither \}}$.
\end{proof}

\section{Completeness without equality}
\label{sec: without equality}

  In this section we axiomatize the logics defined by the following structures:
  \begin{gather*}
    \triple{\DMfour}{\{ \True, \Both \}}{\{ \True \}}, \\
    \triple{\DMfour}{\{ \True, \Both \}}{\{ \True, \Neither \}}, \\
    \triple{\Kthree}{\{ \True, \Neither \}}{\{ \True \}} \iso \triple{\Kthree}{\{ \True, \Both \}}{\{ \True \}}.
  \end{gather*}
  These may naturally be called the four-valued (Belnap--Dunn) logic of truth and exact truth, the four-valued (Belnap--Dunn) logic of truth and non-falsity, and either the three-valued logic of truth and non-falsity or the three-valued logic of truth and exact truth, depending on the interpretation of the middle value.
  
  In the structure $\triple{\DMfour}{\{ \True, \Both \}}{\{ \True \}}$, we view $\{ \True, \Both \}$ as the interpretation of the relational symbol $\IsTrue$ and $\{ \True \}$ as the interpretation of~$\ExTrue$. Likewise, in the structure $\triple{\DMfour}{\{ \True, \Both \}}{\{ \True, \Neither \}}$ we view $\{ \True, \Both \}$ as the interpretation of $\IsTrue$ and $\{ \True, \Neither \}$ as the interpretation of~$\NonFalse$. Finally, in the structure $\triple{\Kthree}{\{ \True, \Neither \}}{\{ \True \}}$ we view $\{ \True, \Neither \}$ and $\{ \True \}$ as interpretations of $\NonFalse$ and $\IsTrue$ respectively, while in the structure $\triple{\Kthree}{\{ \True, \Both \}}{\{ \True \}}$ we view $\{ \True, \Both \}$ and $\{ \True \}$ as interpretations of $\IsTrue$ and $\ExTrue$.

  The completeness proofs all follow the same outline: we axiomatically define a logic~$\logic{L}$ such that the structure in question is a model of~$\logic{L}$. Given a rule which fails in some model of~$\logic{L}$, we then use the closure of the class of all models of~a logic under certain constructions (see Section~\ref{sec: preliminaries}) to show that the rule in fact fails in one of the structures listed above. This will show that a rule holds in the axiomatically defined logic~$\logic{L}$ if and only if it holds in the given structure.

  Each axiomatization will consist of axiomatizations of the individual predicates plus some interaction axioms. When we talk about ``the rules of $\BD$ for $\IsTrue$'' or ``the rules of $\ETL$ for $\ExTrue$'', for example, we mean any axiomatization of the logic of $\pair{\DMfour}{\{ \True, \Both \}}$ where $\{ \True, \Both \}$ is the interpretation of $\IsTrue$ and any axiomatization of the logic of $\pair{\DMfour}{\{ \True \}}$ where $\{ \True \}$ is the interpretation of $\ExTrue$.

\begin{theorem}[Completeness theorem for the logic of $\IsTrue$ and $\ExTrue$]\label{thm: bde}~\\
  The logic of the structure $\triple{\DMfour}{\{ \True, \Both \}}{\{ \True \}}$ in the signature $\{\IsTrue, \ExTrue \}$ is \mbox{axiomatized} by the rules of $\BD$ for $\IsTrue$, the rules of $\ETL$ for $\ExTrue$, and the rules
\begin{align*}
  \ExTrue(x) & \vdash \IsTrue(x), \\
  \ExTrue(x) ~ \& ~ \IsTrue(\dmneg x \vee y) & \vdash \IsTrue(y), \\
  \IsTrue (x) ~ \& ~ \IsTrue (y) ~ \& ~ \ExTrue (\dmneg x \vee y) & \vdash \ExTrue(y).
\end{align*}
  Each reduced model of this logic has the form $\triple{\alg{A}}{T}{\{ \True \}}$ or $\triple{\alg{A}}{T}{\emptyset}$, where $\alg{A}$ is a De~Morgan lattice and $T$ is a lattice filter on $\alg{A}$.
\end{theorem}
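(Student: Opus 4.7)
My strategy follows the outline sketched at the start of Section~\ref{sec: without equality}: verify that the target structure $\triple{\DMfour}{\{\True,\Both\}}{\{\True\}}$ validates each of the listed rules (a routine soundness check), classify the reduced models of the axiomatically defined logic~$\logic{L}$, and transfer any refutation in a reduced model to a refutation in the target structure via the Pair Extension Lemma and Fact~\ref{fact: hf}. Fix a reduced model $\triple{\alg{A}}{T}{E}$ and put $\theta_T \assign \Leibniz{\alg{A}}{T}$, $\theta_E \assign \Leibniz{\alg{A}}{E}$, so that $\theta_T \cap \theta_E$ is the identity; the rules of $\BD$ and $\ETL$ make $\pair{\alg{A}}{T}$ and $\pair{\alg{A}}{E}$ models of~$\BD$ and~$\ETL$ respectively.

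For the classification: if $E = \emptyset$ then $\theta_E = \alg{A}^{2}$ forces $\theta_T$ to be the identity, and Fact~\ref{fact: models of bd}(i) gives the claim. If $E \neq \emptyset$, Fact~\ref{fact: models of bd}(ii) applied to the reduced $\ETL$-model $\pair{\alg{A}/\theta_E}{E/\theta_E}$ shows that $E$ is a single $\theta_E$-class whose representative is the top of $\alg{A}/\theta_E$. The key step is then to show $E$ is a singleton: given $a, b \in E$ we already have $a \,\theta_E\, b$, so it remains to derive $a \,\theta_T\, b$ via Fact~\ref{fact: leibniz of bd}. Suppose $a \vee c \in T$; since $b \in E \subseteq T$ and $T$ is a lattice filter in any $\BD$-model, $\dmneg a \vee b \vee c \geq b$ lies in~$T$, and the rule $\ExTrue(x) ~ \& ~ \IsTrue(\dmneg x \vee y) \vdash \IsTrue(y)$ with $x = a$, $y = b \vee c$ yields $b \vee c \in T$; the three remaining clauses of Fact~\ref{fact: leibniz of bd} follow symmetrically. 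Thus $E = \{\True\}$, and $\True \vee y \,\theta_E\, \True$ forces $\True \vee y \in E = \{\True\}$, so $\True$ is the top of~$\alg{A}$. Since $\alg{A}/\theta_T$ and $\alg{A}/\theta_E$ are De~Morgan lattices and $\theta_T \cap \theta_E$ is the identity, $\alg{A}$ embeds as a subalgebra of their product and is therefore itself a De~Morgan lattice; $T$ is a lattice filter as $\pair{\alg{A}}{T}$ is a model of~$\BD$.

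For completeness, suppose $\Gamma \vdash \varphi$ fails at a valuation~$v$ in a reduced model. When $\varphi = \IsTrue(\beta)$, Lemma~\ref{lemma: filter-ideal separation} supplies a prime filter $T' \supseteq T$ avoiding $v(\beta)$, and Fact~\ref{fact: hf} yields the refutation in $\triple{\DMfour}{\{\True,\Both\}}{\{\True\}}$. When $\varphi = \ExTrue(\beta)$ and $E = \{\True\}$, one has $v(\beta) \neq \True$ and tries to produce a prime $T'$ with either $v(\beta) \notin T'$ (giving $h_{T'}(v(\beta)) \in \{\False, \Neither\}$) or $\dmneg v(\beta) \in T'$ (giving $h_{T'}(v(\beta)) = \Both$); should both attempts fail, the failure of the second supplies some $t \in T$ with $t \wedge \dmneg v(\beta)$ equal to the bottom, so $\dmneg t \vee v(\beta) = \True \in E$, and since the failure of the first forces $v(\beta) \in T$, the third interaction rule applied to $x = t$, $y = v(\beta)$ yields $v(\beta) \in E$, contradicting $v(\beta) \neq \True$. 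When $E = \emptyset$, $\Gamma$ carries no $\ExTrue$-premises (else the rule is vacuously valid), so an $\IsTrue$-conclusion reduces to standard $\BD$-completeness while an $\ExTrue$-conclusion fails in~$\DMfour$ at the constant-$\Both$ valuation, which satisfies every $\IsTrue$-premise and makes $v(\beta) = \Both \neq \True$. The main obstacle is the singleton-$E$ argument, where the second interaction rule combined with filter upward-closure is just strong enough to force $a \,\theta_T\, b$ for $a, b \in E$; the $\ExTrue$-conclusion case of completeness is similarly delicate and crucially relies on the third interaction rule.
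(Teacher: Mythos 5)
Your proof is correct, but the completeness half takes a genuinely different route from the paper's. The classification of reduced models essentially reproduces the paper's argument (one small caution: the clause $\dmneg a \vee c \in T \Rightarrow \dmneg b \vee c \in T$ is not literally ``symmetric'' to the $a \vee c$ clause --- it needs the second interaction rule instantiated at $y = c$ to get $c \in T$ and then upward closure --- though it does go through). For completeness the paper writes $T$ and $E$ globally as intersections of families of prime filters indexed by $a \notin T$ and $b \in T \setminus E$, transfers the failing rule to a single structure $\triple{\alg{A}}{T'}{T' \cap NF'}$ via Fact~\ref{fact: intersection of filters}, and then --- the delicate point it flags explicitly --- factors by the Leibniz congruence of $\triple{\alg{A}}{T'}{NF'}$ rather than of $\triple{\alg{A}}{T'}{T' \cap NF'}$. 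You instead fix the refuting valuation and hunt for one prime filter $T'$ that kills the specific conclusion while $h_{T'}$ preserves the premises. This works, but only because you first established that $\True$ is the top of $\alg{A}$: any proper $T' \supseteq T$ then gives $h_{T'}(\True) = \True$, so the $\ExTrue$-premises are preserved for free, and your ``both attempts fail'' contradiction in the $\ExTrue$-conclusion case is in substance the paper's disjointness computation (the ideal generated by $b \in T \setminus E$ and $\dmneg[T]$ is disjoint from $E$ by the third interaction rule) specialized to the principal case. Your route is shorter and sidesteps the Leibniz-congruence bookkeeping, but it leans on the accident that $E$ is the singleton of the top element; it would not adapt to Theorem~\ref{thm: bdnf}, where $NF$ is an arbitrary filter and the paper's uniform decomposition template is what gets reused. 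Two details deserve to be made explicit: the premise-preservation step under $h_{T'}$ (it rests on $T'$ being proper, which you obtain either from $v(\beta) \notin T'$ or from constructing $T'$ disjoint from the ideal $\{\dmneg\True\}$), and the soundness check of the three interaction rules on $\triple{\DMfour}{\{\True,\Both\}}{\{\True\}}$, which you wave off but which is the half of ``axiomatized by'' that licenses the final transfer.
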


\begin{proof}
  Let $\logic{L}$ be the logic axiomatized by the given rules. We first show that each reduced model $\triple{\alg{A}}{T}{E}$ of $\logic{L}$ has the stated form. The structures $\pair{\alg{A}}{T}$ and $\pair{\alg{A}}{E}$ are models of $\BD$, therefore $\alg{A} / \Leibniz{\alg{A}}{T}$ and $\alg{A} / \Leibniz{\alg{A}}{E}$ are De~Morgan lattices by Fact~\ref{fact: models of bd}. It follows that $\alg{A} \iso \alg{A} / \left( \Leibniz{\alg{A}}{T} \cap \Leibniz{\alg{A}}{E} \right)$ is a De~Morgan lattice, by virtue of being a subdirect product of De~Morgan lattices, and $T$ and $E$ are \mbox{lattice} filters on $\alg{A}$. If $T = \emptyset$, then $E = \emptyset$ thanks to the rule $\ExTrue(x) \vdash \IsTrue(x)$. In that case the reduced structure $\triple{\alg{A}}{T}{E} = \triple{\alg{A}}{\emptyset}{\emptyset}$ is isomorphic to the substructure of $\triple{\DMfour}{\{ \True, \Both \}}{\{ \True \}}$ with the universe~$\{ \Neither \}$.

  It remains to prove that $E$ is a singleton if non-empty. Consider thus $a, b \in E$. Then $\pair{a}{b} \in \Leibniz{\alg{A}}{T}$: $a \vee c \in T$ and $b \vee c \in T$ for each $c \in \alg{A}$ by virtue of the rule $\ExTrue(x) \vdash \IsTrue(x)$, and $\dmneg a \vee c \in T \iff \dmneg b \vee c \in T$ for each $c \in \alg{A}$ thanks to the rule $\ExTrue(x) ~ \& ~ \IsTrue(\dmneg x \vee y) \vdash \IsTrue(y)$. Likewise, $\pair{a}{b} \in \Leibniz{\alg{A}}{E}$: $a \vee c \in E$ and $b \vee c \in E$ for each $c \in \alg{A}$, and $\dmneg a \vee c \in E \iff \dmneg b \vee c \in E$ thanks to the rule $\ExTrue(x) ~ \& ~ \ExTrue(\dmneg x \vee y) \vdash \ExTrue(y)$. Thus $a = b$ and $E = \{ \True \}$.

  The structure $\triple{\DMfour}{\{ \True, \Both \}}{\{ \True \}}$ is a model of $\logic{L}$. Conversely, suppose that a rule fails in some model of $\logic{L}$. Then it fails in a non-trivial reduced model $\triple{\alg{A}}{T}{E}$, where $\alg{A}$ is a De~Morgan lattice, $T$ is a lattice filter on $\alg{A}$, and $E = \{ \True \}$ or $E = \emptyset$. In the rest of the proof, we show that the rule also fails in $\triple{\DMfour}{\{ \True, \Both \}}{\{ \True \}}$.

  Consider $a \notin T$ (if some such $a$ exists). The ideal generated by adding $a$ to $\dmneg [E]$ is disjoint from the filter~$T$: if $\dmneg c \vee a \in T$ for some $c \in E$, then $a \in T$ thanks to the rule $\ExTrue(x) ~ \& ~ \IsTrue(\dmneg x \vee y) \vdash \IsTrue(y)$. This ideal extends to a prime ideal $I_{a}$ disjoint from $T$ by Lemma~\ref{lemma: filter-ideal separation}.
  
   Similarly, consider $b \in T \setminus E$ (if some such $b$ exists). The ideal generated by adding $b$ to $\dmneg [T]$ is disjoint from the filter $E$: if $\dmneg c \vee b \in E$ for some $c \in T$, then $b \in E$ thanks to the rule $\IsTrue(x) ~ \& ~ \IsTrue(y) ~ \& ~ \ExTrue(\dmneg x \vee y) \vdash \ExTrue(y)$. This ideal extends to a prime ideal $J_{b}$ disjoint from $E$ by Lemma~\ref{lemma: filter-ideal separation}.
  
  Let us now define the following prime filters for $a \notin T$ and $b \in T \setminus E$:
  \begin{align*}
    T_{a} & \assign \alg{A} \setminus I_{a} \supseteq T, & U_{b} & \assign \dmneg [J_{b}] \supseteq T, \\
    M_{a} & \assign \dmneg [I_{a}] \supseteq E, & N_{b} & \assign \alg{A} \setminus J_{b} \supseteq E.
  \end{align*}
  In particular, $T_{a} \cap \dmneg [M_{a}] = \emptyset$ and $U_{b} \cap \dmneg [N_{b}] = \emptyset$. Moreover, $a \notin T_{a}$ and $b \notin N_{b}$. It~follows that
  \begin{align*}
    T & = \left( \bigcap_{a \in \alg{A} \setminus T} T_{a} \right) \cap \left( \bigcap_{b \in T \setminus E} U_{b} \right), \\
    E & = \left( \bigcap_{a \in \alg{A} \setminus T} (T_{a} \cap M_{a}) \right) \cap \left( \bigcap_{b \in T \setminus E} (U_{b} \cap N_{b}) \right).
  \end{align*}
  There is either some $a \in \alg{A} \setminus T$ or some $b \in T \setminus E$ because the structure $\triple{\alg{A}}{T}{E}$ is a non-trivial. If a rule fails in the structure $\triple{\alg{A}}{T}{E}$, it therefore fails in one of the structures $\triple{\alg{A}}{T_{a}}{T_{a} \cap M_{a}}$ or $\triple{\alg{A}}{U_{b}}{U_{b} \cap N_{b}}$ (recall Fact~\ref{fact: intersection of filters}). Let $T' \assign T_{a}$ and $NF' \assign M_{a}$ in the former case, and $T' \assign U_{b}$ and $NF' \assign N_{b}$ in the latter case. Then the rule in question fails in the structure $\triple{\alg{A}}{T'}{T' \cap NF'}$, where $T' \supseteq T$ and $NF' \supseteq E$ are prime filters on $\alg{A}$ with $NF' = \alg{A} \setminus \dmneg [T']$.
  
  Now consider the Leibniz congruence $\theta \assign \Leibniz{\alg{A}}{T'} \cap \Leibniz{\alg{A}}{NF'}$ of the structure $\triple{\alg{A}}{T'}{NF'}$. Because
  \begin{align*}
    \theta = \Leibniz{\alg{A}}{T'} \cap (\Leibniz{\alg{A}}{T'} \cap \Leibniz{\alg{A}}{NF'}) \subseteq \Leibniz{\alg{A}}{T'} \cap \Leibniz{\alg{A}}{(T' \cap NF')},
  \end{align*}
  the structure $\triple{\alg{A} / \theta}{T' / \theta}{(T' \cap NF') / \theta}$ validates the same rules as $\triple{\alg{A}}{T'}{T' \cap NF'}$ by Fact~\ref{fact: leibniz}. But by Fact~\ref{fact: prime leibniz reduct 2} the structure $\triple{\alg{A} / \theta}{T' / \theta}{NF' / \theta}$ is iso\-morphic to a substructure of the structure $\triple{\DMfour}{\{ \True, \Both \}}{\{ \True, \Neither \}}$, hence $\triple{\alg{A} / \theta}{T' / \theta}{(T' \cap NF') / \theta}$ is isomorphic to a substructure of $\triple{\DMfour}{\{ \True, \Both \}}{\{ \True \}}$.

  (Note that we did \emph{not} take the Leibniz reduct of the structure $\triple{\alg{A}}{T'}{T' \cap NF'}$. Instead, we factored it by the Leibniz congruence of $\triple{\alg{A}}{T'}{NF'}$ because we wanted to factor by a congruence compatible with~$NF'$.)
\end{proof}

  The above axiomatization is in fact slightly redundant. Recall that if we identify terms and formulas as in Subsection~\ref{subsec: bd}, then $\ETL$ is the extension of $\BD$ by the rule $x \wedge (\dmneg x \vee y) \vdash y$. But the rule $\ExTrue(x \wedge (\dmneg x \vee y)) \vdash \ExTrue(y)$ is derivable from the three inter\-action axioms between $\IsTrue$ and $\ExTrue$. When stating the axioms for the predicate $\ExTrue$ in Theorem~\ref{thm: bde}, it therefore suffices to take an axiomatization of $\BD$ and replace the predicate $\IsTrue$ by $\ExTrue$ throughout.

\begin{theorem}\label{thm: bde exp}
  In order to axiomatize the expansion of the above logic by some of the constants $\True$, $\Neither$, $\Both$, it suffices to add the appropriate subset of the following rules:
  \begin{align*}
    \emptyset & \vdash \ExTrue(\True), & \IsTrue(\Neither \vee x) & \vdash \IsTrue(x), & \IsTrue(x) & \vdash \ExTrue(\Neither \vee x), \\
    \emptyset & \vdash \IsTrue(\Both \wedge \dmneg \Both), & \IsTrue(\dmneg \Neither \vee x) & \vdash \IsTrue(x), & \IsTrue(x) & \vdash \ExTrue(\dmneg \Neither \vee x).
  \end{align*}
\end{theorem}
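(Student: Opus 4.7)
The plan is to follow the template of Theorem~\ref{thm: bde}. First I would define $\logic{L}'$ to be the logic axiomatized by the rules of Theorem~\ref{thm: bde} together with exactly those of the six listed rules that mention a constant present in the signature, and verify soundness by direct inspection in $\triple{\DMfour}{\{\True, \Both\}}{\{\True\}}$ with the constants interpreted by the matching elements of $\DMfour$. For completeness, given a rule failing in some model of $\logic{L}'$, Theorem~\ref{thm: bde} lets me assume the failure occurs in a non-trivial reduced model $\triple{\alg{A}}{T}{E}$ with $\alg{A}$ a De~Morgan lattice, $T$ a lattice filter, and $E \in \{\emptyset, \{\True\}\}$ where $\True$ denotes the top of $\alg{A}$.

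The next step is to translate each new rule into a structural constraint on the corresponding constant. The rule $\emptyset \vdash \ExTrue(\True)$ forces $\True^{\alg{A}} \in E$, so $E = \{\True\}$ and $\True^{\alg{A}}$ coincides with the top of $\alg{A}$. The rule $\emptyset \vdash \IsTrue(\Both \wedge \dmneg \Both)$ puts $\Both^{\alg{A}} \wedge \dmneg \Both^{\alg{A}}$ into $T$, whence both $\Both^{\alg{A}}$ and $\dmneg \Both^{\alg{A}}$ lie in $T$. The four $\Neither$-rules together with $\ExTrue \vdash \IsTrue$ yield the biconditionals $\Neither^{\alg{A}} \vee x \in T \iff x \in T \iff \dmneg \Neither^{\alg{A}} \vee x \in T$ and analogous ones for $E$, so Fact~\ref{fact: leibniz of bd} places $\pair{\Neither^{\alg{A}}}{\dmneg \Neither^{\alg{A}}}$ in the Leibniz congruence of the structure; reducedness then gives $\Neither^{\alg{A}} = \dmneg \Neither^{\alg{A}}$. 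Furthermore, $\Neither^{\alg{A}} \in T$ would force $T = \alg{A}$ via $\IsTrue(\Neither \vee x) \vdash \IsTrue(x)$ and upward closure, a case which the rule $\IsTrue(x) \vdash \ExTrue(\Neither \vee x)$ combined with $E \neq \alg{A}$ rules out under non-triviality whenever $\Neither$ is in the signature.

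I would then adapt the Pair Extension Lemma step of Theorem~\ref{thm: bde} by adjoining $\Neither^{\alg{A}}$ to the ideal generated by $\dmneg[E] \cup \{a\}$ (or $\dmneg[T] \cup \{b\}$). Disjointness from the relevant filter survives: every generator of the enlarged ideal either already lies in the original ideal, or has the form $y \vee \Neither^{\alg{A}}$ for some $y$ in the original ideal, in which case $\IsTrue(\Neither \vee x) \vdash \IsTrue(x)$ (together with $\ExTrue \vdash \IsTrue$ on the $E$ side) transports $y \vee \Neither^{\alg{A}} \in T$ back to $y \in T$, contradicting the disjointness already established in the proof of Theorem~\ref{thm: bde}. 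The resulting prime filter $T'$ therefore satisfies $\Neither^{\alg{A}} \notin T'$, so passing to the Leibniz reduct of $\triple{\alg{A}}{T'}{T' \cap NF'}$ via Fact~\ref{fact: prime leibniz reduct 2} embeds it into $\triple{\DMfour}{\{\True, \Both\}}{\{\True\}}$ exactly as in Theorem~\ref{thm: bde}. The constraints collected above ensure that the homomorphism $h_{T'}$ of Fact~\ref{fact: hf} sends $\True^{\alg{A}}, \Both^{\alg{A}}, \Neither^{\alg{A}}$ to $\True, \Both, \Neither$ respectively, and hence the embedding is a substructure embedding in the expanded signature.

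The main obstacle will be the disjointness check for the enlarged ideal, since it must be carried out uniformly in both branches of the Theorem~\ref{thm: bde} argument and with the $\dmneg \Neither$ variants of the rules kept in play. A secondary concern is the degenerate case $T = \alg{A}$, which survives only when the signature contains just $\Both$ (and neither $\True$ nor $\Neither$); there the Leibniz reduct collapses to the singleton substructure of the target with universe $\{\Both\}$, which still contains the constant and validates the relevant rules.
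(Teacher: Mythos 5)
Your overall strategy is the paper's: extend the proof of Theorem~\ref{thm: bde} so that the constants end up in the right position relative to the prime filter $T'$, and the handling of $\True$ and $\Both$, as well as your derivation of $\Neither^{\alg{A}} = \dmneg \Neither^{\alg{A}}$ from reducedness, are fine. But the second branch of your prime-filter construction is wrong. Recall that for $b \in T \setminus E$ the prime ideal $J_{b}$ is used to define $T' \assign U_{b} = \dmneg[J_{b}]$, not $T' \assign \alg{A} \setminus J_{b}$. Hence putting $\Neither^{\alg{A}}$ \emph{into} $J_{b}$ forces $\dmneg \Neither^{\alg{A}} \in \dmneg[J_{b}] = T'$, and with $\Neither^{\alg{A}} = \dmneg \Neither^{\alg{A}}$ this places $\Neither^{\alg{A}}$ in $T'$ — exactly the opposite of what is needed for $h_{T'}$ to send $\Neither^{\alg{A}}$ to $\Neither$ rather than to $\Both$. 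Moreover, the disjointness you assert does not survive: already in $\triple{\DMfour}{\{\True,\Both\}}{\{\True\}}$ with $b \assign \Both$, the ideal generated by $\dmneg[T] \cup \{b, \Neither\}$ contains $\Both \vee \Neither = \True \in E$. Your stated worry about carrying the check ``uniformly in both branches'' is precisely where the argument breaks: the two branches cannot be treated uniformly by enlarging the ideal.

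The correct move in the second branch is dual: keep $\Neither^{\alg{A}}$ and $\dmneg\Neither^{\alg{A}}$ \emph{out of} $J_{b}$ by enlarging the filter on the other side of the Pair Extension Lemma. One shows that the ideal generated by $\dmneg[T] \cup \{b\}$ is disjoint from the filter generated by $E \cup \{\Neither \wedge \dmneg\Neither\}$: if $\Neither \wedge \dmneg\Neither \leq \dmneg c \vee b$ with $c \in T$, then $\dmneg c \vee b = (\Neither \wedge \dmneg\Neither) \vee \dmneg c \vee b \in E$ because both $\Neither \vee \dmneg c \vee b$ and $\dmneg\Neither \vee \dmneg c \vee b$ lie in $E$ by the rules $\IsTrue(x) \vdash \ExTrue(\Neither \vee x)$ and $\IsTrue(x) \vdash \ExTrue(\dmneg\Neither \vee x)$ — rules your separation step never uses, which is a sign that something is missing. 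The resulting prime ideal $J_{b}$ then avoids $\Neither^{\alg{A}}$ and $\dmneg\Neither^{\alg{A}}$, so neither belongs to $T' = \dmneg[J_{b}]$, as required. (In the first branch your enlargement of the ideal by $\Neither^{\alg{A}}$ does work, since there $T' = \alg{A} \setminus I_{a}$.)
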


\begin{proof}
  It will suffice to extend or modify the proof of Theorem~\ref{thm: bde} in order to ensure that $\True \in T'$ and $\dmneg \True \notin T'$, $\Both \in T'$ and $\dmneg \Both \in T'$, and $\Neither \notin T'$ and $\dmneg \Neither \notin T'$. This is because the structure $\triple{\alg{A} / \theta}{T' / \theta}{NF' / \theta}$ is then isomorphic to a substructure of $\triple{\DMfour}{\{ \True, \Both \}}{\{ \True, \Neither \}}$, and therefore $\triple{\alg{A} / \theta}{T' / \theta}{(T' \cap NF') / \theta}$ is isomorphic to a substructure of $\triple{\DMfour}{\{ \True, \Both \}}{\{ \True \}}$ even if the signature is expanded by the constants.

  For the constants $\True$ and $\Both$ this is easy. The rules directly ensure that $\True \in T'$ and $\True \in NF'$, hence $\dmneg \True \notin T'$, and also $\Both, \dmneg \Both \in T'$. For the constant $\Neither$, we modify the proof of Theorem~\ref{thm: bde} slightly.

  Instead of considering the ideal generated by adding $a \notin T$ to $\dmneg [E]$, we consider the ideal generated by adding $\Neither \vee \dmneg \Neither \vee a$ to $\dmneg [E]$. This is an ideal disjoint from~$T$ thanks to the rules $\IsTrue(\Neither \vee x) \vdash \IsTrue(x)$ and $\IsTrue(\dmneg \Neither \vee x) \vdash \IsTrue(x)$. It therefore extends to a prime ideal $I_{a}$ disjoint from $T$.
  
   Likewise, instead of proving that the ideal generated by adding $b \in T \setminus E$ to $\dmneg [T]$ is disjoint from $E$, we prove that it is disjoint from the filter generated by adding $\Neither \wedge \dmneg \Neither$ to $E = \{ \True \}$ (which is just the filter generated by $\Neither \wedge \dmneg \Neither$). If it were not, then $\Neither \wedge \dmneg \Neither \leq \dmneg c \vee b$ for some $c \in T$, but then $\dmneg c \vee b = (\Neither \wedge \dmneg \Neither) \vee \dmneg c \vee b \in E$ because $\Neither \vee \dmneg c \vee b \in E$ and $\dmneg \Neither \vee \dmneg c \vee b \in E$ thanks to the rules $\IsTrue(x) \vdash \ExTrue(\Neither \vee x)$ and $\IsTrue(x) \vdash \ExTrue(\dmneg \Neither \vee x)$. The ideal generated by adding $b \in T \setminus E$ therefore extends to a prime filter $J_{b}$ disjoint from the filter generated by $\Neither \wedge \dmneg \Neither$.

 The above modifications then ensure that $\Neither \notin T'$ and $\dmneg \Neither \notin T'$.
\end{proof}

  The completeness theorem for the four-valued logic of truth and non-falsity is proved in a very similar fashion.

\begin{theorem}[Completeness theorem for the logic of $\IsTrue$ and $\NonFalse$]\label{thm: bdnf}~\\
  The logic of the structure $\triple{\DMfour}{\{ \True, \Both \}}{\{ \True, \Neither \}}$ in the signature $\{\IsTrue, \NonFalse \}$ is axiomatized by the rules of $\BD$ for $\IsTrue$, the rules of $\BD$ for $\NonFalse$, and the rules
\begin{align*}
  \IsTrue(x) ~ \& ~ \NonFalse(\dmneg x \vee y) & \vdash \NonFalse(y), \\
  \NonFalse(x) ~ \& ~ \IsTrue(\dmneg x \vee y) & \vdash \IsTrue(y).
\end{align*}
  Each reduced model of this logic has the form $\triple{\alg{A}}{T}{NF}$, where $\alg{A}$ is a De~Morgan lattice with lattice filters $T$ and $NF$, and $T \cap NF = \{ \True \}$ or $T \cap NF = \emptyset$.
\end{theorem}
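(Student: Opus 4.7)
My plan is to mimic the strategy used in the proof of Theorem~\ref{thm: bde}, exploiting the pleasant symmetry between $\IsTrue$ and $\NonFalse$ in the present axiomatization. Let $\logic{L}$ denote the logic axiomatized by the listed rules. I first show that every reduced model of $\logic{L}$ has the stated form, and then I show that any rule which fails in some model of $\logic{L}$ already fails in $\triple{\DMfour}{\{ \True, \Both \}}{\{ \True, \Neither \}}$.

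For the structural characterization, both $\pair{\alg{A}}{T}$ and $\pair{\alg{A}}{NF}$ are models of $\BD$, so by Fact~\ref{fact: models of bd} the quotients $\alg{A} / \Leibniz{\alg{A}}{T}$ and $\alg{A} / \Leibniz{\alg{A}}{NF}$ are De~Morgan lattices in which the images of $T$ and $NF$ are lattice filters. Since the structure is reduced, $\alg{A}$ is a subdirect product of these two quotients and is hence itself a De~Morgan lattice, with $T$ and $NF$ lattice filters on it. To bound $T \cap NF$ by a single element, I take $a, b \in T \cap NF$ and apply Fact~\ref{fact: leibniz of bd}: the join clauses $a \vee c, b \vee c \in T$ are automatic, and the clause $\dmneg a \vee c \in T \iff \dmneg b \vee c \in T$ follows from the rule $\NonFalse(x) ~\&~ \IsTrue(\dmneg x \vee y) \vdash \IsTrue(y)$ applied with $y \assign \dmneg b \vee c$, using $\NonFalse(a)$ and the upward closure of $T$ along $\dmneg a \vee c \leq \dmneg a \vee \dmneg b \vee c$. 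The dual interaction rule gives $\pair{a}{b} \in \Leibniz{\alg{A}}{NF}$, so $a = b$ in a reduced structure.

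For completeness, suppose a rule fails in a non-trivial reduced model $\triple{\alg{A}}{T}{NF}$. For each $a \notin T$, the interaction rule $\NonFalse(x) ~\&~ \IsTrue(\dmneg x \vee y) \vdash \IsTrue(y)$ shows that the lattice ideal generated by $\dmneg [NF] \cup \{ a \}$ is disjoint from $T$: if $\dmneg c \vee a \in T$ for some $c \in NF$, the rule would force $a \in T$. The Pair Extension Lemma then yields a prime ideal $I_{a}$ disjoint from $T$; setting $T_{a} \assign \alg{A} \setminus I_{a}$ and $NF_{a} \assign \dmneg [I_{a}]$, I obtain prime filters with $T_{a} \supseteq T$, $NF_{a} \supseteq NF$, $a \notin T_{a}$, and crucially $NF_{a} = \alg{A} \setminus \dmneg [T_{a}]$. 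Dually, for each $b \notin NF$ the rule $\IsTrue(x) ~\&~ \NonFalse(\dmneg x \vee y) \vdash \NonFalse(y)$ lets me construct a prime ideal $J_{b}$ and prime filters $T_{b} \assign \dmneg [J_{b}]$ and $NF_{b} \assign \alg{A} \setminus J_{b}$ with the analogous properties. Intersecting all these pairs recovers $T$ and $NF$, so by Fact~\ref{fact: intersection of filters} the rule in question fails in some particular $\triple{\alg{A}}{T'}{NF'}$ with $T', NF'$ prime filters satisfying $NF' = \alg{A} \setminus \dmneg [T']$. Fact~\ref{fact: prime leibniz reduct 2} then embeds the Leibniz reduct of this structure into $\triple{\DMfour}{\{ \True, \Both \}}{\{ \True, \Neither \}}$, and Fact~\ref{fact: leibniz} transfers the failure to the embedded substructure, hence to the target.

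The main technical subtlety, as in Theorem~\ref{thm: bde}, is arranging the prime-filter constructions so that the compatibility relation $NF' = \alg{A} \setminus \dmneg [T']$ is maintained, since this is precisely what licenses the invocation of Fact~\ref{fact: prime leibniz reduct 2}. Compared to the $\IsTrue$/$\ExTrue$ case, however, the present argument is in fact cleaner: the natural symmetry between $T$ and $NF$ means I can work with the full Leibniz reduct of $\triple{\alg{A}}{T'}{NF'}$, whereas in Theorem~\ref{thm: bde} one has to factor by a strictly smaller congruence in order to keep the secondary predicate compatible.
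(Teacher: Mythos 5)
Your proposal is correct and follows essentially the same route as the paper's own proof: the same Leibniz-congruence argument for $T \cap NF$ being at most a singleton, the same prime-ideal constructions from $\dmneg[NF] \cup \{a\}$ and $\dmneg[T] \cup \{b\}$ recovering $T$ and $NF$ as intersections, and the same appeal to Fact~\ref{fact: prime leibniz reduct 2}. Your closing observation that this case, unlike Theorem~\ref{thm: bde}, permits taking the full Leibniz reduct of $\triple{\alg{A}}{T'}{NF'}$ directly is also exactly how the paper proceeds.
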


\begin{proof}
  Let $\logic{L}$ be the logic axiomatized by these rules. We show that each reduced model $\triple{\alg{A}}{T}{NF}$ of $\logic{L}$ has the stated form. The structures $\pair{\alg{A}}{T}$ and $\pair{\alg{A}}{NF}$ are models of $\BD$, hence $\alg{A} / \Leibniz{\alg{A}}{T}$ and $\alg{A} / \Leibniz{\alg{A}}{NF}$ are De~Morgan lattices by Fact~\ref{fact: models of bd}. Thus $\alg{A} \iso \alg{A} / \left( \Leibniz{\alg{A}}{T} \cap \Leibniz{\alg{A}}{NF} \right)$ is a De~Morgan lattice and $T$ and $NF$ are lattice filters.

  To prove that $T \cap NF$ is a singleton if non-empty, consider $a, b \in T \cap NF$. Then $\pair{a}{b} \in \Leibniz{\alg{A}}{T}$: $a \vee c \in T$ and $b \vee c \in T$ for each $c \in \alg{A}$, and $\dmneg a \vee c \in T \iff \dmneg b \vee c \in T$ for each $c \in \alg{A}$ thanks to the rule $\NonFalse(x) ~ \& ~ \IsTrue(\dmneg x \vee y) \vdash \IsTrue(y)$. Similarly, $\pair{a}{b} \in \Leibniz{\alg{A}}{NF}$ thanks to the rule $\IsTrue(x) ~ \& ~ \NonFalse(\dmneg x \vee y) \vdash \NonFalse(y)$.

  The structure $\triple{\DMfour}{\{ \True, \Both \}}{\{ \True, \Neither \}}$ is a model of $\logic{L}$. Conversely, suppose that a rule fails in some model of $\logic{L}$. Then it fails in a non-trivial reduced model $\triple{\alg{A}}{T}{NF}$. In~the rest of the proof, we show that the rule also fails in the structure $\triple{\DMfour}{\{ \True, \Both \}}{\{ \True, \Neither \}}$.

  Consider $a \notin T$ (if some such $a$ exists). The ideal generated by adding $a$ to $\dmneg [NF]$ is disjoint from the filter~$T$: if $\dmneg c \vee a \in T$ for some $c \in NF$, then $a \in T$ thanks to the rule $\NonFalse(x) ~ \& ~ \IsTrue(\dmneg x \vee y) \vdash \IsTrue(y)$. This ideal extends to a prime ideal $I_{a}$ disjoint from $T$ by Lemma~\ref{lemma: filter-ideal separation}.
  
   Similarly, consider $b \notin NF$ (if some such $b$ exists). The ideal generated by adding $b$ to $\dmneg [T]$ is disjoint from $NF$ by the rule $\IsTrue(x), \NonFalse(\dmneg x \vee y) \vdash \NonFalse(y)$. This ideal extends to a prime ideal $J_{b}$ disjoint from $NF$ by Lemma~\ref{lemma: filter-ideal separation}.

  Let us now define the following prime filters for $a \notin T$ and $b \notin NF$:
  \begin{align*}
    T_{a} & \assign \alg{A} \setminus I_{a} \supseteq T, & U_{b} & \assign \alg{A} \setminus J_{b} \supseteq T, \\
    M_{a} & \assign \dmneg [I_{a}] \supseteq NF, & N_{b} & \assign \dmneg [J_{b}] \supseteq NF.
  \end{align*}
  In particular, $T_{a} \cap \dmneg [M_{a}] = \emptyset$ and $U_{b} \cap \dmneg [N_{b}] = \emptyset$. Moreover, $a \notin T_{a}$ and $b \notin N_{b}$. It~follows that
  \begin{align*}
    T & = \left( \bigcap_{a \in \alg{A} \setminus T} T_{a} \right) \cap \left( \bigcap_{b \in \alg{A} \setminus NF} U_{b} \right), \\
    NF & = \left( \bigcap_{a \in \alg{A} \setminus T} M_{a} \right) \cap \left( \bigcap_{b \in \alg{A} \setminus NF} N_{b} \right).
  \end{align*}
  There is either some $a \in \alg{A} \setminus T$ or some $b \in \alg{A} \setminus NF$ because the structure $\triple{\alg{A}}{T}{NF}$ is non-trivial. If a rule fails in the structure $\triple{\alg{A}}{T}{NF}$, it therefore also fails in one of the structures $\triple{\alg{A}}{T_{a}}{M_{a}}$ or $\triple{\alg{A}}{U_{b}}{N_{b}}$ (recall Fact~\ref{fact: intersection of filters}). Let $T' \assign T_{a}$ and $NF' \assign M_{a}$ in the former case, and $T' \assign U_{b}$ and $NF' \assign N_{b}$ in the latter case. Then the rule in question fails in the structure $\triple{\alg{A}}{T'}{NF'}$, where $T' \supseteq T$ and $NF' \supseteq NF$ are prime filters on $\alg{A}$ with $NF' = \alg{A} \setminus \dmneg [T']$. By Fact~\ref{fact: prime leibniz reduct 2}, the Leibniz reduct of $\triple{\alg{A}}{T'}{NF'}$ is now isomorphic to a substructure of $\triple{\DMfour}{\{ \True, \Both \}}{\{ \True, \Neither \}}$. Thus each rule which fails in $\triple{\alg{A}}{T'}{NF'}$ also fails in $\triple{\DMfour}{\{ \True, \Both \}}{\{ \True, \Neither \}}$.
\end{proof}

\begin{theorem}\label{thm: bdnf exp}
  In order to axiomatize the expansion of the above logic by some of the constants $\True$, $\Neither$, $\Both$, it suffices to add the appropriate subset of the following rules:
  \begin{align*}
    \emptyset & \vdash \IsTrue(\True), & \emptyset & \vdash \IsTrue(\Both), & \emptyset & \vdash \NonFalse(\Neither), \\
    \emptyset & \vdash \NonFalse(\True), & \emptyset & \vdash \IsTrue(\dmneg \Both), & \emptyset & \vdash \NonFalse(\dmneg \Neither).
  \end{align*}
\end{theorem}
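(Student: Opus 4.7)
The plan is to run the argument of Theorem~\ref{thm: bdnf} with essentially no modification, and simply observe that the newly added constant axioms pin down the interpretations of $\True, \Neither, \Both$ in the prime filters produced along the way. Let $\logic{L}$ now denote the expansion of the logic of Theorem~\ref{thm: bdnf} by the appropriate subset of the constant axioms. Given a rule which fails in some model of~$\logic{L}$, it fails in some non-trivial reduced model $\triple{\alg{A}}{T}{NF}$, and we proceed exactly as before to construct prime filters $T' \supseteq T$ and $NF' \supseteq NF$ with $NF' = \alg{A} \setminus \dmneg[T']$ such that the rule fails in~$\triple{\alg{A}}{T'}{NF'}$.

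The point is that these containments $T' \supseteq T$ and $NF' \supseteq NF$, combined with the constant axioms, already force $T'$ and $NF'$ to contain precisely the required elements. If $\True$ is in the signature, the axioms $\IsTrue(\True)$ and $\NonFalse(\True)$ put $\True \in T \cap NF \subseteq T' \cap NF'$; since $\True \in NF' = \alg{A} \setminus \dmneg[T']$, we also have $\dmneg \True \notin T'$, so $h_{T'}(\True) = \True$. If $\Both$ is in the signature, the axioms $\IsTrue(\Both)$ and $\IsTrue(\dmneg \Both)$ give $\Both, \dmneg \Both \in T \subseteq T'$, hence $h_{T'}(\Both) = \Both$. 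If $\Neither$ is in the signature, the axioms $\NonFalse(\Neither)$ and $\NonFalse(\dmneg \Neither)$ put $\Neither, \dmneg \Neither \in NF \subseteq NF'$; using $NF' = \alg{A} \setminus \dmneg[T']$ twice, this gives $\Neither, \dmneg \Neither \notin T'$, so $h_{T'}(\Neither) = \Neither$.

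Consequently the homomorphism $h_{T'}$ from Fact~\ref{fact: hf} sends each constant to the corresponding element of~$\DMfour$. By Fact~\ref{fact: prime leibniz reduct 2}, the Leibniz reduct of $\triple{\alg{A}}{T'}{NF'}$ embeds into $\triple{\DMfour}{\{\True, \Both\}}{\{\True, \Neither\}}$, and this embedding now respects all the constants present in the signature. Thus any rule which fails in the original structure also fails in the expansion of $\triple{\DMfour}{\{\True, \Both\}}{\{\True, \Neither\}}$ by these constants, completing the completeness argument.

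There is essentially no obstacle here, in sharp contrast to the situation of Theorem~\ref{thm: bde exp}: because the predicate $\NonFalse$ is interpreted by a genuine lattice filter (rather than the singleton $\{\True\}$), the naive axioms $\emptyset \vdash \NonFalse(\Neither)$ and $\emptyset \vdash \NonFalse(\dmneg \Neither)$ are already consistent with $\NonFalse$ being closed under conjunction, so there is no need to perturb the filter-ideal separation step by inserting terms involving $\Neither \vee \dmneg \Neither$ or $\Neither \wedge \dmneg \Neither$, as had to be done for the $\ExTrue$ signature.
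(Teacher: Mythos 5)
Your proposal is correct and follows the same route as the paper: rerun the completeness argument of Theorem~\ref{thm: bdnf} unchanged and check that the constant axioms, together with $T \subseteq T'$, $NF \subseteq NF' = \alg{A} \setminus \dmneg[T']$, force $h_{T'}$ to send each constant to the corresponding element of $\DMfour$. Your explicit verification of the membership conditions, and the observation that (unlike in Theorem~\ref{thm: bde exp}) no perturbation of the filter--ideal separation step is needed, is exactly the content the paper's one-line proof leaves implicit.
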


\begin{proof}
  The argument is the same as in Theorem~\ref{thm: bde exp}: these rules suffice to ensure that the constants belong or do not belong to $T'$ and $NF'$ as appropriate.
\end{proof}

\begin{theorem}[Completeness theorem for the Kleene logic of $\IsTrue$ and $\ExTrue$]\label{thm: ke}~\\
  The logic of the structure $\triple{\Kthree}{\{ \True, \Both \}}{\{ \True \}}$ in the signature $\{ \IsTrue, \ExTrue \}$ is axiomatized by the rules of $\BD$ for $\IsTrue$, the rules of $\ETL$ for $\ExTrue$, and the rules
\begin{align*}
  \emptyset & \vdash \IsTrue(x \vee \dmneg x), & \ExTrue(x) ~ \& ~ \IsTrue(\dmneg x \vee y) & \vdash \IsTrue(y), \\
  \ExTrue(x) & \vdash \IsTrue(x), & \IsTrue (x) ~ \& ~ \ExTrue (\dmneg x \vee y) & \vdash \ExTrue(y).
\end{align*}
  Each reduced model of this logic has the form $\triple{\alg{A}}{T}{\{ \True \}}$ or $\triple{\alg{A}}{T}{\emptyset}$, where $\alg{A}$ is a Kleene lattice and $T$ is a lattice filter on $\alg{A}$.
\end{theorem}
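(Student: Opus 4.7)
The plan is to mirror the proof of Theorem~\ref{thm: bde}, replacing ``De~Morgan lattice'' by ``Kleene lattice'' throughout the reduced-model characterization and using the new axiom $\emptyset \vdash \IsTrue(x \vee \dmneg x)$ to confine the image of the canonical homomorphism $h_{T'}$ to the Kleene subalgebra $\{\True, \Both, \False\}$ of~$\DMfour$. The first, routine step is to verify that $\triple{\Kthree}{\{\True, \Both\}}{\{\True\}}$ validates each of the listed rules.

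For the reduced-model characterization, let $\triple{\alg{A}}{T}{E}$ be a reduced model of the axiomatically defined logic~$\logic{L}$. The structure $\pair{\alg{A}}{T}$ validates the rules of $\LP$, so $\alg{A}/\Leibniz{\alg{A}}{T}$ is a Kleene lattice by Fact~\ref{fact: models of bd}(iii). The key observation for $\pair{\alg{A}}{E}$ is that instantiating the interaction rule $\IsTrue(x) ~\&~ \ExTrue(\dmneg x \vee y) \vdash \ExTrue(y)$ with $x \assign u \vee \dmneg u$, which belongs to~$T$ by the excluded-middle axiom, yields $\ExTrue((u \wedge \dmneg u) \vee y) \vdash \ExTrue(y)$. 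This is precisely the defining axiom of~$\K$ for $\ExTrue$, so $\pair{\alg{A}}{E}$ is a model of~$\K$ and $\alg{A}/\Leibniz{\alg{A}}{E}$ is a Kleene lattice with $E/\Leibniz{\alg{A}}{E} = \{\True\}$ when non-empty, by Fact~\ref{fact: models of bd}(iv). By reducedness the intersection of the two Leibniz congruences is the identity, so $\alg{A}$ is a subdirect product of two Kleene lattices and is therefore itself a Kleene lattice. That $E$ is a singleton or empty and that $T$ is a filter follows exactly as in Theorem~\ref{thm: bde}.

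For the completeness direction, I reproduce the construction from the proof of Theorem~\ref{thm: bde}: from a non-trivial reduced model $\triple{\alg{A}}{T}{E}$ in which the rule fails, I obtain prime filters $T' \supseteq T$ and $NF'$ on~$\alg{A}$ with $NF' = \alg{A} \setminus \dmneg[T']$ such that the rule still fails in $\triple{\alg{A}}{T'}{T' \cap NF'}$. By Fact~\ref{fact: prime leibniz reduct 2} the Leibniz reduct of $\triple{\alg{A}}{T'}{NF'}$ embeds into $\triple{\DMfour}{\{\True, \Both\}}{\{\True, \Neither\}}$. The new ingredient is showing that the image of $h_{T'}$ avoids the value~$\Neither$: since $x \vee \dmneg x \in T \subseteq T'$ for every $x \in \alg{A}$, we have $h_{T'}(x) \vee \dmneg h_{T'}(x) \in \{\True, \Both\}$ in~$\DMfour$, and an inspection confirms that only the values $\True, \Both, \False$ satisfy this, since $\Neither \vee \dmneg \Neither = \Neither$. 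Consequently the Leibniz reduct of $\triple{\alg{A}}{T'}{T' \cap NF'}$ embeds into $\triple{\Kthree}{\{\True, \Both\}}{\{\True\}}$, completing the argument.

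The main obstacle I anticipate is recognising that $\pair{\alg{A}}{E}$ is automatically a $\K$-model rather than merely an $\ETL$-model. The substitution $x \assign u \vee \dmneg u$ in the two-premise interaction rule is the key observation; it converts that rule into the defining $\K$-axiom for $\ExTrue$ and lets the entire reduced-model analysis run in direct analogy with Theorem~\ref{thm: bde}, with parts (i) and (ii) of Fact~\ref{fact: models of bd} upgraded to parts (iii) and (iv).
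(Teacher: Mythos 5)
Your proposal is correct and takes essentially the same route as the paper: the substitution $x \assign u \vee \dmneg u$ in the interaction rule (equivalently, $u \wedge \dmneg u = \dmneg(u \vee \dmneg u)$ together with $u \vee \dmneg u \in T$) is exactly how the paper shows that $\pair{\alg{A}}{E}$ is a model of $\K$, and the rest is the same reduction to the proof of the four-valued case, with the excluded-middle axiom forcing the image of $h_{T'}$ into the Kleene subalgebra $\{ \False, \Both, \True \}$ of $\DMfour$ because $T'$ is prime.
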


\begin{proof}
  The proof of this theorem is entirely analogous to the proof of Theorem~\ref{thm: bde}. Having already gone through two variants of this proof in detail, we only show how the proof of Theorem~\ref{thm: bde} has to be modified.

  If $\logic{L}$ is the logic axiomatized by the given rules and $\triple{\alg{A}}{T}{E}$ is a reduced model of $\logic{L}$, then the rule $\emptyset \vdash \IsTrue(x \vee \dmneg x)$ ensures that the structure $\pair{\alg{A}}{T}$ is a model of~$\LP$, while the other rules ensure that $\pair{\alg{A}}{E}$ is a model of $\K$: if $(a \wedge \dmneg a) \vee b \in E$, then $b \in E$ because $a \wedge \dmneg a = \dmneg (a \vee \dmneg a)$ and $a \vee \dmneg a \in T$. Therefore $\alg{A}$ is a Kleene lattice by Fact~\ref{fact: models of bd}, and $T$ is a lattice filter and $E = \{ \True \}$ or $E = \emptyset$ by Theorem~\ref{thm: bde}.

  The rest of the proof proceeds as in Theorems~\ref{thm: bde} and \ref{thm: bdnf}. The only difference is that thanks to the rule $\emptyset \vdash \IsTrue(x \vee \dmneg x)$ we can restrict to the substructure of $\triple{\DMfour}{\{ \True, \Both \}}{\{ \True \}}$ with the universe $\{ \False, \Both, \Neither \}$.
\end{proof}

\begin{theorem}
  In order to axiomatize the expansion of the above logic by some of the constants $\True$ and $\Both$, it suffices to add the appropriate subset of the following rules:
  \begin{align*}
    \emptyset & \vdash \ExTrue(\True), &
    \emptyset & \vdash \IsTrue(\Both), &
    \emptyset & \vdash \IsTrue(\dmneg \Both).
  \end{align*}
\end{theorem}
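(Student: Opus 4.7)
The plan is to follow the strategy of Theorem~\ref{thm: bde exp}: modify the proof of Theorem~\ref{thm: ke} to guarantee that the homomorphism $h_{T'}\colon \alg{A} \to \DMfour$ sends each added constant to the correct element of the three-element substructure of $\DMfour$ that is isomorphic to $\Kthree$ (with the middle element identified as $\Both$). Recall that in Theorem~\ref{thm: ke} one constructs, via the Pair Extension Lemma, prime filters $T' \supseteq T$ and $NF' \supseteq E$ on $\alg{A}$ satisfying $NF' = \alg{A} \setminus \dmneg[T']$, and one shows that the Leibniz reduct of $\triple{\alg{A}}{T'}{NF'}$ embeds into $\triple{\DMfour}{\{\True,\Both\}}{\{\True,\Neither\}}$ via $h_{T'}$, with the rule $\emptyset \vdash \IsTrue(x \vee \dmneg x)$ ruling out $\Neither$ from the image. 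Only the correct interpretation of the constants needs to be verified.

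For the constant $\True$: the rule $\emptyset \vdash \ExTrue(\True)$ together with $\ExTrue(x) \vdash \IsTrue(x)$ places $\True$ in $E \subseteq T \subseteq T'$, while the inclusion $E \subseteq NF'$ arranged in the construction (since both $M_{a} = \dmneg[I_{a}]$ and $N_{b} = \alg{A} \setminus J_{b}$ were built to contain $E$) gives $\True \in NF'$, equivalently $\dmneg \True \notin T'$. Hence $h_{T'}(\True) = \True$. For the constant $\Both$: the rules $\emptyset \vdash \IsTrue(\Both)$ and $\emptyset \vdash \IsTrue(\dmneg \Both)$ put both $\Both$ and $\dmneg \Both$ into $T \subseteq T'$, so $h_{T'}(\Both) = \Both$.

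Consequently, if a rule in the expanded signature fails in a non-trivial reduced model of the extended logic, the same argument as in Theorem~\ref{thm: ke} produces a structure $\triple{\alg{A}}{T'}{T' \cap NF'}$ in which the rule still fails, and whose Leibniz reduct now embeds \emph{as a structure with constants} into the appropriate expansion of $\triple{\Kthree}{\{\True,\Both\}}{\{\True\}}$; conversely, the target structure validates the added rules by direct inspection. I do not expect any genuinely new obstacle here: the only mildly delicate point is confirming that the construction of $NF'$ in Theorem~\ref{thm: ke} has already secured $E \subseteq NF'$, which is exactly what forces $\True$ into $NF'$ and not merely into $T'$. Beyond this, the argument is a routine Kleene-flavored reuse of the machinery developed for Theorems~\ref{thm: bde} and \ref{thm: bde exp}.
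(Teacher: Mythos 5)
Your proposal is correct and matches the paper's proof, which simply defers to the argument of Theorem~\ref{thm: bde exp}: there the constants $\True$ and $\Both$ are exactly the ``easy'' cases, handled by observing that the added rules force $\True \in T'$ and $\True \in NF'$ (hence $\dmneg\True \notin T'$) and $\Both, \dmneg\Both \in T'$, which is precisely the verification you carry out. Your identification of the one delicate point --- that the construction of $NF'$ in Theorem~\ref{thm: ke} already guarantees $E \subseteq NF'$ --- is accurate and is indeed what the paper's argument relies on.
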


\begin{proof}
  The argument is again the same as in Theorem~\ref{thm: bde exp}.
\end{proof}

  The combination of all three predicates $\{ \IsTrue, \NonFalse, \ExTrue \}$ need not be considered explicitly, since the predicate $\ExTrue$ is definable in terms of the other \mbox{predicates}: $\IsTrue(x) ~ \& ~ \NonFalse \vdash \ExTrue(x)$ and conversely $\ExTrue(x) \vdash \IsTrue(x)$ and $\ExTrue(x) \vdash \NonFalse(x)$.

\section{Completeness with equality}
\label{sec: with equality}

  We now add the predicate $\approx$, interpreted by the equality relation on the algebras $\DMfour$ and $\Kthree$, to the logics introduced in the previous section and to $\BD$ and $\ETL$.
   
  Let us first observe that equality in any structure is a compatible congruence (see Subsection~\ref{subsec: leibniz}), which means precisely that it satisfies the following rules stating that it is an equivalence relation:
  \begin{align*}
    \emptyset & \vdash p \approx p, & p \approx q & \vdash q \approx p, & p \approx q ~ \& ~ q \approx r & \vdash p \approx r,
  \end{align*}
  as well as the following rules for each $n$-ary function symbol $f$ and each $n$-ary \mbox{relation} symbol $\Rrel$, which state that it is a congruence with respect to $f$ and that it is compatible with $\Rrel$:
  \begin{align*}
    p_{1} \approx q_{1} ~ \& ~ \dots ~ \& ~ p_{n} \approx q_{n} & \vdash f(p_{1}, \dots, p_{n}) \approx f(q_{1}, \dots, q_{n}), \\
    \Rrel(p_{1}, \dots, p_{n}), p_{1} \approx q_{1} ~ \& ~ \dots ~ \& ~ p_{n} \approx q_{n} & \vdash \Rrel(q_{1}, \dots, q_{n}).
  \end{align*}
  Conversely, any binary relation on a \emph{reduced} structure which satisfies the above rules is in fact the equality relation. Let us record this fact here.

\begin{fact} \label{fact: equality relation}
  A compatible congruence on a reduced structure is the equality relation.
\end{fact}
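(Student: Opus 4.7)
The plan is to verify that any binary relation $\theta$ on $\alg{A}$ satisfying the rules displayed just before the fact must coincide with the identity relation whenever the structure $\langle \alg{A}, \Rrel_{1}, \dots, \Rrel_{n} \rangle$ is reduced. Reflexivity, i.e.\ the rule $\emptyset \vdash p \approx p$, gives $\theta \supseteq \set{\pair{a}{a}}{a \in \alg{A}}$, so only the reverse inclusion needs real work.

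To establish $\theta \subseteq \set{\pair{a}{a}}{a \in \alg{A}}$, I would first note that the rules involving function symbols make $\theta$ a congruence on $\alg{A}$, and then show that the compatibility rule for each $\Rrel_{i}$ is equivalent to the semantic compatibility condition used in Subsection~\ref{subsec: leibniz}. For unary $\Rrel_{i}$ the translation is immediate: the rule $\Rrel_{i}(p), p \approx q \vdash \Rrel_{i}(q)$ is literally the condition that $a \in \Rrel_{i}$ and $\pair{a}{b} \in \theta$ imply $b \in \Rrel_{i}$. For binary $\Rrel_{i}$, the syntactic rule combined with the symmetry and transitivity of $\theta$ yields the required implication: $\pair{a}{b} \in \Rrel_{i}$ together with $\pair{a}{c}, \pair{b}{d} \in \theta$ forces $\pair{c}{d} \in \Rrel_{i}$. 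Invoking the characterization of $\Leibniz{\alg{A}}{\Rrel_{i}}$ as the largest congruence compatible with $\Rrel_{i}$, we then obtain $\theta \subseteq \Leibniz{\alg{A}}{\Rrel_{i}}$ for every $i$, and consequently $\theta \subseteq \theta_{L}$.

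Because the structure is reduced, $\theta_{L}$ is the identity relation on $\alg{A}$, so $\theta$ is contained in the diagonal, which combined with reflexivity completes the proof. There is no genuine obstacle here; the argument is essentially a single invocation of the maximality property of the Leibniz congruence. The only piece of bookkeeping worth flagging is the passage between the rule-based formulation of compatibility and the polynomial-free formulation used in the definition of $\Leibniz{\alg{A}}{\Rrel_{i}}$, which for binary relations requires the brief symmetry-and-transitivity step indicated above.
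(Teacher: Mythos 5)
Your argument is correct and is exactly the intended one: a compatible congruence is contained in each $\Leibniz{\alg{A}}{\Rrel_{i}}$ by the maximality characterization of the Leibniz congruence given in Subsection~\ref{subsec: leibniz}, hence in $\theta_{L}$, which is the identity relation on a reduced structure, while reflexivity gives the reverse inclusion. The paper records this fact without an explicit proof, and your write-up supplies precisely the standard justification it is implicitly relying on.
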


  In universal algebra, the above rules are taken for granted because the relational symbol $\approx$ is assumed to be interpreted by the equality relation. To axiomatize the quasi-equational theory of $\DMfour$ in the sense of universal algebra, the equalities $\dmneg \dmneg x \approx x$ and either $\dmneg (x \vee y) \approx \dmneg x \wedge \dmneg y$ or $\dmneg (x \wedge y) \approx \dmneg x \vee \dmneg y$ therefore suffice.

  In the current setting, on the other hand, we treat the equality symbol on an equal footing with other relational symbols, therefore the above rules must be imposed explicitly. An axiomatization of the logic of the structure $\pair{\DMfour}{=}$ in our sense therefore consists of the above compatibility rules plus the rules $\emptyset \vdash \dmneg \dmneg x \approx x$ and either $\emptyset \vdash \dmneg (x \vee y) \approx \dmneg x \wedge \dmneg y$ or $\emptyset \vdash \dmneg (x \wedge y) \approx \dmneg x \vee \dmneg y$.

\begin{theorem}[Completeness theorem for the logic of $\IsTrue$ and $\approx$]\label{thm: bd eq}~\\
  The logic of the structure $\triple{\DMfour}{\{ \True, \Both \}}{=}$ in the signature $\{ \IsTrue, \approx \}$ is axiomatized by extending an axiomatization of De~Morgan lattices for $\approx$ by the rules
\begin{align*}
  \IsTrue(x) ~ \& ~ \IsTrue(y) & \vdash \IsTrue(x \wedge y), \\
  \IsTrue(x) ~ \& ~ \IsTrue(y) & \vdash \dmneg x \vee y \approx y,
\end{align*}
\vskip -20pt
\begin{align*}
  \IsTrue(z) ~ \& ~ x \wedge z \approx y \wedge z ~ \& ~ \dmneg y \wedge z \approx \dmneg x \wedge z & \vdash x \approx y.
\end{align*}
  Each reduced model of this logic has the form $\triple{\alg{A}}{T}{=}$, where $\alg{A}$ is a De~Morgan lattice and $T$ is a lattice filter on $\alg{A}$.
\end{theorem}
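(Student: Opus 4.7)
The plan is to follow the three-step template of Theorems~\ref{thm: bde}, \ref{thm: bdnf}, and \ref{thm: ke}: define $\logic{L}$ to be the logic axiomatized by the listed rules together with the implicit equality-compatibility rules and the De~Morgan lattice equations for $\approx$, verify that $\triple{\DMfour}{\{\True, \Both\}}{=}$ is a model of $\logic{L}$, characterize the reduced models, and show that any rule failure in a reduced model transfers down to the target structure. That the target structure is a model is immediate: rule~1 just says $\{\True, \Both\}$ is closed under meet, rule~2 reduces to $\dmneg \True = \False \leq y$ and $\dmneg \Both = \Both \leq y$ for $y \in \{\True, \Both\}$, and rule~3 holds because the map $x \mapsto \pair{x \wedge z}{\dmneg x \wedge z}$ is injective on $\DMfour$ for each $z \in \{\True, \Both\}$.

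For the characterization of reduced models, Fact~\ref{fact: equality relation} together with the equality-compatibility rules forces $\approx$ to be the equality relation on $\alg{A}$; the De~Morgan lattice equations then make $\alg{A}$ a De~Morgan lattice, and rule~1 ensures $T$ is closed under meet. The up-closure of $T$ needed to complete the argument that $T$ is a lattice filter is extracted from rules~2 and~3 in conjunction with the fact that $\approx$ has become equality.

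The transfer of rule failure now follows the strategy of Theorem~\ref{thm: bde}. Given a non-trivial reduced model $\triple{\alg{A}}{T}{=}$ and a valuation $v$ witnessing failure of some rule, the Pair Extension Lemma supplies for each $a \notin T$ a prime filter $T_{a} \supseteq T$ with $a \notin T_{a}$, so that $T = \bigcap_{a \notin T} T_{a}$. If the conclusion of the failing rule has the form $\IsTrue(s)$, the rule already fails in the structure $\triple{\alg{A}}{T_{v(s)}}{=}$, and the homomorphism $h_{T_{v(s)}}\colon \alg{A} \to \DMfour$ from Fact~\ref{fact: hf} then realizes this failure inside $\triple{\DMfour}{\{\True, \Both\}}{=}$. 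Premises of the form $\IsTrue(t)$ are preserved because $T \subseteq T_{v(s)}$, while equality premises $t_{1} \approx t_{2}$ are preserved automatically because $h_{T_{v(s)}}$ is a function.

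The main obstacle is handling a conclusion of the form $s_{1} \approx s_{2}$ with $v(s_{1}) \neq v(s_{2})$, where one must produce a prime filter $T' \supseteq T$ such that $h_{T'}$ distinguishes $v(s_{1})$ from $v(s_{2})$---equivalently, such that exactly one of $\{v(s_{1}), v(s_{2})\}$ or of $\{\dmneg v(s_{1}), \dmneg v(s_{2})\}$ lies in $T'$. Here rule~3 is decisive: if none of the four corresponding filter-ideal pairs were disjoint, meet-closure of $T$ would yield a single $z \in T$ satisfying $z \wedge v(s_{1}) \leq v(s_{2})$, $z \wedge v(s_{2}) \leq v(s_{1})$, $z \wedge \dmneg v(s_{1}) \leq \dmneg v(s_{2})$, and $z \wedge \dmneg v(s_{2}) \leq \dmneg v(s_{1})$; these inequalities combine to $v(s_{1}) \wedge z = v(s_{2}) \wedge z$ and $\dmneg v(s_{1}) \wedge z = \dmneg v(s_{2}) \wedge z$, so rule~3 would force $v(s_{1}) = v(s_{2})$, contradicting $v(s_{1}) \neq v(s_{2})$. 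Hence some candidate $T'$ exists, the Pair Extension Lemma produces it, and the homomorphism $h_{T'}$ embeds the relevant data into a substructure of $\triple{\DMfour}{\{\True, \Both\}}{=}$ where the rule still fails.
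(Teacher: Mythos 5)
Your overall strategy matches the paper's: characterize the reduced models, then use the Pair Extension Lemma to produce a prime filter $U \supseteq T$ witnessing the failure and push the failure into $\DMfour$ via the homomorphism $h_{U}$ of Fact~\ref{fact: hf}. Your handling of the crucial separation step is in fact cleaner than the paper's: where the paper proves, for $a<b$, a two-alternative separation by a four-way case analysis on which of $a,\dmneg a,b,\dmneg b$ lie in $T$ (using the second rule in three of the four cases), you observe that if all four filter--ideal pairs fail to be disjoint, then meet-closure yields a single $z\in T$ with $z\wedge a=z\wedge b$ and $z\wedge\dmneg a=z\wedge\dmneg b$, so the third rule forces $a=b$. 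The paper also organizes the reduction differently---it shows $\triple{\alg{A}}{T}{=}=\langle\alg{A},\bigcap_{i}U_{i},\bigcap_{i}(\Leibniz{\alg{A}}{U_{i}})\rangle$ over all prime $U_{i}\supseteq T$ and invokes Fact~\ref{fact: intersection of filters}, whereas you tailor a single $U$ to the conclusion of the failing rule---but these are two presentations of the same idea, and your case split on the shape of the conclusion is handled correctly.

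The genuine gap is your claim that up-closure of $T$ ``is extracted from rules 2 and 3.'' It is not: the structure $\triple{\DMfour}{\{\Both\}}{=}$ satisfies all three listed rules together with the compatibility and De~Morgan rules (rules 1 and 2 only ever fire with $x=y=\Both$, and rule 3 holds because $x\mapsto\pair{x\wedge\Both}{\dmneg x\wedge\Both}$ is injective on $\DMfour$), yet $\{\Both\}$ is not up-closed. This matters: without up-closure you cannot find a prime filter $U\supseteq T$ omitting a given $a\notin T$ (take $T=\{\Both\}$ and $a=\True$), so your $\IsTrue(s)$-conclusion case breaks down, and indeed the rule $\IsTrue(x)\vdash\IsTrue(x\vee y)$, which is valid in the target structure, fails in this model. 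So up-closure is not derivable from the rules as literally listed; the intended reading (compare Theorem~\ref{thm: bdnf eq}, which lists ``the rules of $\BD$ for $\IsTrue$'' separately from the equality rules and cites the present theorem for $T$ being a lattice filter) is that the rules of $\BD$ for $\IsTrue$ are included in the axiomatization, whence $T$ is a lattice filter by Fact~\ref{fact: models of bd}. With that understanding the remainder of your argument goes through; without it, the step needs to be repaired rather than asserted.
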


\begin{proof}
  Let $\logic{L}$ be the logic axiomatized by the given rules and $\triple{\alg{A}}{T}{\approx}$ be a reduced model of $\logic{L}$. Then ${\approx}$ is the equality relation on $\alg{A}$ by Fact~\ref{fact: equality relation}, therefore $\alg{A}$ is a De~Morgan lattice and $T$ is a lattice filter on $\alg{A}$.

  The structure $\triple{\DMfour}{\{ \True, \Both \}}{=}$ is a model of $\logic{L}$. Conversely, we prove that if a rule fails in a non-trivial reduced model $\triple{\alg{A}}{T}{\approx}$ of $\logic{L}$, then it fails in $\triple{\DMfour}{\{ \True, \Both \}}{=}$.

  We claim that for each $a < b$ in $\alg{A}$ there is some filter $U \supseteq T$ such that either $b \in U$ and $a \notin U$ or $\dmneg a \in U$ and $\dmneg b \notin U$. (We may assume without loss of generality that $a \in T \iff b \in T$ and $\dmneg a \in T \iff \dmneg b \in T$, otherwise the claim holds trivially for $U \assign T$.) The following case analysis proves this claim:
  
  (i) If $a, \dmneg a, b, \dmneg b \in T$, then $a = b$ because $\dmneg (\dmneg a) \leq b$ and $\dmneg (\dmneg b) \leq a$ thanks to the rule $\IsTrue(x) ~ \& ~ \IsTrue(y) \vdash \dmneg x \vee y \approx y$.

  (ii) If $a, b \in T$ and $\dmneg a, \dmneg b \notin T$, then there is some filter $U \supseteq T$ such that $\dmneg a \in U$ and $\dmneg b \notin U$. Otherwise the filter generated by $T$ and $\dmneg a$ contains $\dmneg b$, in which case there is some $f \in T$ such that $f \wedge \dmneg a \leq \dmneg b$, so $b \leq a \vee \dmneg f$. But $a \vee \dmneg f = a$ thanks to the rule $\IsTrue(x) ~ \& ~ \IsTrue(y) \vdash \dmneg x \vee y \approx y$, so $b \leq a$, contradicting $a < b$.

  (iii) If $a, b \notin T$ and $\dmneg a, \dmneg b \in T$, then by (ii) with $a, b$ and $\dmneg a, \dmneg b$ exchanged, there is some filter $U \supseteq T$ such that $b \in U$ and $a \notin U$.

  (iv) If $a, \dmneg a, b, \dmneg b \notin T$, then there is some filter $U \supseteq T$ such that either $b \in U$ and $a \notin U$ or $\dmneg a \in U$ and $\dmneg b \notin U$. Otherwise there is some $f \in T$ such that $f \wedge b \leq a$ and $f \wedge \dmneg a \leq \dmneg b$, hence $f \wedge a \wedge b = f \wedge b$ and $f \wedge \dmneg (a \wedge b) = f \wedge \dmneg b$. But then $a \wedge b = b$ thanks to the rule $\IsTrue(z) ~ \& ~ x \wedge z \approx y \wedge z ~ \& ~ \dmneg y \wedge z \approx \dmneg x \wedge z \vdash x \approx y$.
  
  Let $\set{U_{i}}{i \in I}$ be the set of all prime filters extending $T$. The above claim implies by Lemma~\ref{lemma: filter-ideal separation} that for each $a < b$ there is a prime filter $U_{i} \supseteq T$ such that either $b \in U_{i}$ and $a \notin U_{i}$ or $\dmneg a \in U_{i}$ and $\dmneg b \notin U_{i}$. In particular, $\pair{a}{b} \notin \Leibniz{\alg{A}} U_{i}$. Thus $a = b$ in $\alg{A}$ if and only if $\pair{a}{b} \in \Leibniz{\alg{A}} U_{i}$ for each prime filter $U_{i} \supseteq T$. In other words,  
  \begin{align*}
    \triple{\alg{A}}{T}{=} & = \biggl< \alg{A}, \bigcap_{i \in I} U_{i}, \bigcap_{i \in I} (\Leibniz{\alg{A}}{U_{i})} \biggr>.
  \end{align*}
  But then by Fact~\ref{fact: intersection of filters} a rule which fails in $\triple{\alg{A}}{T}{=}$ must also fail in one of the structures $\triple{\alg{A}}{U_{i}}{\Leibniz{\alg{A}} U_{i}}$. Finally, by Fact~\ref{fact: prime leibniz reduct} the Leibniz reduct of $\triple{\alg{A}}{U_{i}}{\Leibniz{\alg{A}} U_{i}}$, namely the structure $\triple{\alg{A} / \Leibniz{\alg{A}} U_{i}}{U_{i} / \Leibniz{\alg{A}}{U_{i}}}{=}$, is isomorphic to a sub\-structure of $\triple{\DMfour}{\{ \True, \Both \}}{=}$, therefore the rule also fails in $\triple{\DMfour}{\{ \True, \Both \}}{=}$.
\end{proof}

\begin{theorem}\label{thm: bd eq exp}
  In order to axiomatize the expansion of the above logic by some of the constants $\True$, $\Neither$, $\Both$, it suffices to add the appropriate subset of the following rules:
\begin{align*}
  \emptyset & \vdash \True \approx \True \vee x, & \emptyset & \vdash \Neither \approx \dmneg \Neither, & \emptyset & \vdash \Both \vee \Neither \approx \True,
\end{align*}
\begin{align*}
  \emptyset & \vdash \IsTrue(\True), & \emptyset & \vdash \IsTrue(\Both), & \IsTrue(\Neither \vee x) & \vdash \IsTrue(x), \\
  \IsTrue(\dmneg \True) & \vdash x \approx y, & \emptyset & \vdash \IsTrue(\dmneg \Both), & \IsTrue(x) & \vdash \Neither \vee x \vee y \approx \Neither \vee x.
\end{align*}
\end{theorem}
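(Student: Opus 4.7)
The approach is to follow the proof of Theorem~\ref{thm: bd eq}, adapting each step to control how the constants are interpreted in the prime filters we construct. The equational axioms fix the structural role of each constant in any model~$\alg{A}$: $\True \approx \True \vee x$ makes $\True$ the top of~$\alg{A}$; $\Neither \approx \dmneg \Neither$ makes $\Neither$ a fixpoint of De~Morgan negation; and $\Both \vee \Neither \approx \True$ forces $\Both$ and $\Neither$ to be Boolean complements. The rules $\emptyset \vdash \IsTrue(\True)$, $\emptyset \vdash \IsTrue(\Both)$, and $\emptyset \vdash \IsTrue(\dmneg \Both)$ place these elements in~$T$.

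In any non-trivial reduced model we then establish $\dmneg \True \notin T$ and $\Neither \notin T$. The first follows from $\IsTrue(\dmneg \True) \vdash x \approx y$: otherwise $\approx$ would be the total relation on~$\alg{A}$, which by Fact~\ref{fact: equality relation} forces $\alg{A}$ to be a singleton. The second follows from $\IsTrue(x) \vdash \Neither \vee x \vee y \approx \Neither \vee x$ applied with $x \assign \Neither$: the conclusion $y \leq \Neither$ for all~$y$ would make $\Neither$ both the top and, since $\Neither = \dmneg \Neither$, also the bottom. The key modification of the proof of Theorem~\ref{thm: bd eq} is then in the separation claim: for each $a < b$ in~$\alg{A}$, we augment each ideal considered in the case analysis by the element $\Neither$, so that the prime filter $U \supseteq T$ produced by the Pair Extension Lemma automatically satisfies $\Neither \notin U$ and is proper, hence $\dmneg \True \notin U$.

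To see that this augmentation still admits the needed separation, consider case~(ii): if it fails we have $f \wedge \dmneg a \leq \dmneg b \vee \Neither$ for some $f \in T$, so by De~Morgan $b \wedge \Neither \leq a \vee \dmneg f$, which reduces to $b \wedge \Neither \leq a$ by the rule $\IsTrue(x) ~ \& ~ \IsTrue(y) \vdash \dmneg x \vee y \approx y$ applied to $f, a \in T$. Now $\IsTrue(x) \vdash \Neither \vee x \vee y \approx \Neither \vee x$ with $x \assign a$, $y \assign b$ yields $b \leq \Neither \vee a$, so $b = b \wedge (\Neither \vee a) = (b \wedge \Neither) \vee (b \wedge a) \leq a$, contradicting $a < b$. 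Case~(iii) is dual, case~(i) needs no change, and in case~(iv) failure of both augmented separations yields $\Neither \vee a = \Neither \vee b$ and $\Neither \vee \dmneg a = \Neither \vee \dmneg b$ by distributing $\Neither$ through each inequality and using $\Neither \vee f = \top$ for $f \in T$; taking $\dmneg$ of the second gives $b \wedge \Neither \leq a$, and then the same calculation as in case~(ii) yields $b = a$. I expect case~(iv) to be the main obstacle, as it requires combining both $\Neither$-rules with the distributivity of the lattice without being able to invoke the rule $\IsTrue(x) ~ \& ~ \IsTrue(y) \vdash \dmneg x \vee y \approx y$ for $a$ or~$b$. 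Once the modified separation claim is established, the rest of the argument proceeds as in Theorem~\ref{thm: bd eq}: Fact~\ref{fact: prime leibniz reduct} gives Leibniz reducts isomorphic to substructures of $\triple{\DMfour}{\{ \True, \Both \}}{=}$, and the constraints on~$U$ ensure that the constants $\True$, $\Neither$, $\Both$ are interpreted as the intended elements of~$\DMfour$.
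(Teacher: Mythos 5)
Your proposal is correct and follows essentially the same route as the paper: the paper likewise reduces everything to the proof of Theorem~\ref{thm: bd eq}, uses the $\True$- and $\Both$-rules to pin those constants inside $T$, and modifies the separation claim so that the prime filters avoid $\Neither$ (it phrases this as separating $\Neither \vee a$ from $\Neither \vee b$ rather than adjoining $\Neither$ to the ideals, but the two formulations coincide since the ideal generated by $\{a, \Neither\}$ is the principal ideal of $\Neither \vee a$). The case analysis and the roles played by the individual rules match the paper's argument step for step.
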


\begin{proof}
  In order to prove this theorem, it suffices to extend or modify the previous proof (with $\logic{L}$ now being the logic extended by the above axioms) and prove that we can choose $U$ so that $\True \in U$, $\dmneg \True \notin U$, and $\Neither, \dmneg \Neither \notin U$, and $\Both, \dmneg \Both \in U$.

  The proofs for the constants~$\Both$ and $\True$ merely extend the previous proof. The two axioms for $\Both$ ensure that $\Both \in T \subseteq U$ and $\dmneg \Both \in T \subseteq U$. (Note that the equality $\Both \approx \dmneg \Both$ is derivable from the axioms for $\Both$ given the other rules.) The two axioms for $\True$ ensure that $\True$ is the top element of $\alg{A}$ and that $\True \in T \subseteq U$. The rule $\IsTrue(\dmneg \True) \vdash x \approx y$ ensures that in each reduced model $\triple{\alg{A}}{T}{\approx}$ of $\logic{L}$ either $\dmneg \True \notin T$ as desired or $\alg{A}$ is a singleton. But in the latter case the structure $\triple{\alg{A}}{T}{\approx}$ is trivial.

  It remains to prove the claim for $\Neither$. Since $\Neither = \dmneg \Neither$ in each reduced model $\triple{\alg{A}}{T}{\approx}$ of $\logic{L}$, it suffices to prove that we can choose $U$ so that $\Neither \notin U$. Observe that the axioms for $\Neither$ imply that $\Neither \vee f = \True$ whenever $f \in T$, and moreover
\begin{align*}
  f \wedge a \leq b \text{ and } f \wedge \dmneg b \leq \dmneg a \text{ for some } f \in T \implies a \leq b.
\end{align*}
  Recall also that in a distributive lattice
\begin{align*}
  f \wedge a \leq f \wedge b \text{ and } f \vee a \leq f \vee b \text{ for some } f \implies a \leq b.
\end{align*}

  We claim that if $\Neither \vee a < \Neither \vee b$ in $\alg{A}$, then there is some filter $U \supseteq T$ such that either $\Neither \vee b \in U$ and $\Neither \vee a \notin U$ or $\Neither \vee \dmneg a \in U$ and $\Neither \vee \dmneg b \notin U$. (We may again assume without loss of generality that $\Neither \vee a \in T \iff \Neither \vee b \in T$ and $\Neither \vee \dmneg a \in T \iff \Neither \vee \dmneg b \in T$.) The following case analysis proves this claim:

  (i) If $\Neither \vee a, \Neither \vee \dmneg a, \Neither \vee b, \Neither \vee \dmneg b \in T$, then $a, \dmneg a, b, \dmneg b \in T$ thanks to the rule $\IsTrue(\Neither \vee x) \vdash \IsTrue(x)$. Applying the rule $\IsTrue(x) ~ \& ~ \IsTrue(y) \vdash \dmneg x \vee y \approx y$ then yields that $a = b$.

  (ii) If $\Neither \vee a, \Neither \vee b \in T$ and $\Neither \vee \dmneg a, \Neither \vee \dmneg b \notin T$, then there is a filter $U \supseteq T$ such that $\Neither \vee \dmneg a \in U$ and $\Neither \vee \dmneg b \notin U$. Otherwise the filter generated by $T$ and $\Neither \vee \dmneg a$ contains $\Neither \vee \dmneg b$, in which case there is $f \in T$ such that $f \wedge (\Neither \vee \dmneg a) \leq \Neither \vee \dmneg b$, so $\Neither \wedge b \leq \dmneg f \vee (\Neither \wedge a) \leq \dmneg f \vee a$. But the rule $\IsTrue(x) ~ \& ~ \IsTrue(y) \vdash \dmneg x \vee y \approx y$ ensures that $\dmneg f \vee a = a$, so $\Neither \wedge b \leq \Neither \wedge a$. On the other hand, $\Neither \vee b = \True = \Neither \vee a$ because $a, b \in T$ by the rule $\IsTrue(\Neither \vee x) \vdash \IsTrue(x)$. Therefore $b = a$.

  (iii) If $\Neither \vee a, \Neither \vee b \notin T$ and $\Neither \vee \dmneg a, \Neither \vee \dmneg b \in T$, then by (ii) with $\Neither \vee a, \Neither \vee b$ and $\Neither \vee \dmneg a, \Neither \vee \dmneg b$ exchanged there is some filter $U \supseteq T$ such that $\Neither \vee b \in U$ and $\Neither \vee a \notin U$.

  (iv) If $a, \dmneg a, b, \dmneg b \notin T$, then there is some filter $U \supseteq T$ such that either $\Neither \vee b \in U$ and $\Neither \vee a \notin U$ or $\Neither \vee \dmneg a \in U$ and $\Neither \vee \dmneg b \notin U$. Otherwise there is some $f \in T$ such that $f \wedge (\Neither \vee b) \leq \Neither \vee a$ and $f \wedge \dmneg (\Neither \vee a) = f \wedge (\Neither \vee \dmneg a) \leq \Neither \vee \dmneg b = \dmneg (\Neither \vee b)$. But then $\Neither \vee b \leq \Neither \vee a$ by one of the axioms, hence $\Neither \vee b = \Neither \vee a$.

  Finally, note that if $a < b$, then either $\Neither \vee a < \Neither \vee b$ or $\Neither \wedge a < \Neither \wedge b$. In other words, either $\Neither \vee a < \Neither \vee b$ or $\Neither \vee \dmneg b < \Neither \vee \dmneg a$. For each $a < b$ we can therefore find a filter $U \supseteq T$ such that either $\Neither \vee b \in U$ and $\Neither \vee a \notin U$ or $\Neither \vee \dmneg a \in U$ and $\Neither \vee \dmneg b \notin U$. By Lemma~\ref{lemma: filter-ideal separation} there is again a prime filter $U_{i} \supseteq T$ with this property. But for prime filters the above property means that either $b \in U_{i}$ and $a \notin U_{i}$ or $\dmneg a \in U_{i}$ and $\dmneg b \notin U_{i}$. The~rest of the proof is the same as in Theorem~\ref{thm: bd eq}.
\end{proof}

\begin{theorem}[Completeness theorem for the logic of $\ExTrue$ and $\approx$]\label{thm: etl eq}~\\
  The logic of the structure $\triple{\DMfour}{\{ \True \}}{=}$ in the signature $\{ \ExTrue, \approx \}$ is axiomatized by extending an axiomatization of De~Morgan lattices for $\approx$ by the rule
\begin{align*}
  \ExTrue(x) & \vdash x \vee y \approx x.
\end{align*}
  Each reduced model of this logic has either the form $\triple{\alg{A}}{\{ \True \}}{=}$ or the form $\triple{\alg{A}}{\emptyset}{=}$, where $\alg{A}$ is a De~Morgan lattice.
\end{theorem}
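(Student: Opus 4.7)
The plan is to adapt the strategy of Theorem~\ref{thm: bd eq} to the present setting, where the set interpreting $\ExTrue$ is much more constrained. Let $\logic{L}$ denote the axiomatized logic; soundness is routine. For the reduced-model characterization, I first invoke Fact~\ref{fact: equality relation}: the compatibility rules for $\approx$ together with the De~Morgan equations make $\approx$ a compatible congruence that reduces to the identity, so $\alg{A}$ is a De~Morgan lattice. The rule $\ExTrue(x) \vdash x \vee y \approx x$ then forces any $a \in E$ to satisfy $a \vee b = a$ for every $b$, i.e., to be the top of $\alg{A}$; hence $E = \{\top_{\alg{A}}\}$ or $E = \emptyset$.

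For completeness, suppose a rule $\rho = \Gamma \vdash \varphi$ fails in some reduced model $\triple{\alg{A}}{E}{=}$ of $\logic{L}$. I would split the analysis according to the shape of $E$.

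If $E = \{\top\}$, I use Fact~\ref{fact: si dmas} to embed $\alg{A}$ subdirectly into a power $\DMfour^{I}$ as De~Morgan lattices (composing with the embeddings $\Btwo, \Kthree \hookrightarrow \DMfour$). The top element $\top$ maps to $(\True)_{i \in I}$, and by injectivity of this embedding it is the unique element of $\alg{A}$ doing so. Thus the substructure of $\triple{\DMfour}{\{\True\}}{=}^{I}$ on the image of $\alg{A}$ has $\ExTrue$ interpreted by $\{(\True)_{i}\} \cap \iota[\alg{A}] = \{\iota(\top)\}$, making it isomorphic to $\triple{\alg{A}}{\{\top\}}{=}$. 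The failure of $\rho$ therefore transfers to the product, and from there to one factor $\triple{\DMfour}{\{\True\}}{=}$.

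If $E = \emptyset$, then $\Gamma$ cannot contain an $\ExTrue$-formula (otherwise $\rho$ would be vacuously valid), so $\Gamma$ consists entirely of equations. If $\varphi$ is itself an equation, then $\rho$ is an equational rule failing in the De~Morgan lattice $\alg{A}$, hence failing in $\DMfour$ by Fact~\ref{fact: si dmas} and so in $\triple{\DMfour}{\{\True\}}{=}$. If $\varphi$ is $\ExTrue(\psi)$, I appeal to the observation that the valuation $v^{*}$ sending every variable to $\Neither$ makes every De~Morgan term (without constants) evaluate to $\Neither$, since $\Neither$ is idempotent under $\wedge$ and $\vee$ and fixed by $\dmneg$. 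This valuation satisfies every equation in $\Gamma$ (both sides become $\Neither$) but $\psi^{v^{*}} = \Neither \neq \True$, so $\rho$ fails in $\triple{\DMfour}{\{\True\}}{=}$ directly. The main obstacle I anticipate is precisely this last sub-case: the natural subdirect embedding is \emph{not} a substructure embedding into $\triple{\DMfour}{\{\True\}}{=}^{I}$ when $\alg{A}$ has a top, because $\top$ lands inside $\{(\True)_{i}\}$ despite lying outside $E$. The all-$\Neither$ valuation trick sidesteps this by exhibiting a failure in the target structure directly, bypassing $\alg{A}$ altogether.
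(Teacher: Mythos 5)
Your proof is correct, and its backbone---characterizing the reduced models via Fact~\ref{fact: equality relation} and the rule $\ExTrue(x) \vdash x \vee y \approx x$, then using the subdirect embedding of $\alg{A}$ into a power of $\DMfour$ from Fact~\ref{fact: si dmas} to transfer the failing rule when $E = \{\top\}$---is exactly the paper's argument. The only divergence is the degenerate case $E = \emptyset$. The paper stays model-theoretic: it embeds $\triple{\alg{A}}{\emptyset}{=}$ into a power of $\triple{\DMfour}{\emptyset}{=}$ and then embeds $\triple{\DMfour}{\emptyset}{=}$ into $\triple{\DMfour}{\{\True\}}{=} \times \triple{\DMfour}{\{\True\}}{=}$ via $x \mapsto \pair{\Neither}{x}$, so that closure of the model class under products and substructures finishes the job. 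You instead argue syntactically: since $E = \emptyset$ and the rule fails, its premises must all be equations, and you split on whether the conclusion is an equation (so the quasi-equational failure transfers to $\DMfour$ by Fact~\ref{fact: si dmas}) or an $\ExTrue$-formula (refuted in $\triple{\DMfour}{\{\True\}}{=}$ outright by the all-$\Neither$ valuation). Both treatments rest on the same algebraic fact, namely that the constant map to $\Neither$ is a De~Morgan endomorphism; yours yields the slightly sharper observation that no rule with purely equational premises and an $\ExTrue$-conclusion is valid at all, while the paper's version is more uniform and is reused verbatim in Theorem~\ref{thm: bde eq}. The obstacle you flag---that the subdirect embedding fails to be a substructure embedding when $E = \emptyset$ but $\alg{A}$ has a top---is real, and either route around it is legitimate.
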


\begin{proof}
  The structure $\triple{\DMfour}{\{ \True \}}{=}$ is a model of these rules. Conversely, let $\triple{\alg{A}}{E}{\approx}$ be a reduced model of the logic axiomatized by these rules. Then $\alg{A}$ is a De~Morgan lattice and the rule $\ExTrue(x) \vdash x \vee y \approx x$ implies that either $E = \{ \True \}$ or $E$ is empty. The algebra  $\alg{A}$ embeds into a power of $\DMfour$ by Fact~\ref{fact: si dmas}, therefore in the former case $\triple{\alg{A}}{E}{\approx}$ embeds into a power of $\triple{\DMfour}{\{ \True \}}{=}$, and in the latter case it embeds into a power of $\triple{\DMfour}{\emptyset}{=}$. Each rule which fails in $\triple{\alg{A}}{E}{\approx}$ thus also fails either in $\triple{\DMfour}{\{ \True \}}{=}$ or in $\triple{\DMfour}{\emptyset}{=}$. But the structure $\triple{\DMfour}{\emptyset}{=}$ embeds into $\triple{\DMfour}{\{ \True \}}{=} \times \triple{\DMfour}{\{ \True \}}{=}$ via the map $x \mapsto \pair{\Neither}{x}$ or $x \mapsto \pair{\Both}{x}$.
\end{proof}

\begin{theorem}\label{thm: etl eq exp}
  In order to axiomatize the expansion of the above logic by some of the constants $\True$, $\Neither$, $\Both$, it suffices to add the appropriate subset of the following rules:
\begin{align*}
  \emptyset & \vdash \ExTrue(\True), &
  \emptyset & \vdash \ExTrue(\Neither \vee \Both), &
  \emptyset & \vdash \Both \approx \dmneg \Both, &
  \emptyset & \vdash \Neither \approx \dmneg \Neither.
\end{align*}
\end{theorem}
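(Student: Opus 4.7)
The plan is to extend the proof of Theorem~\ref{thm: etl eq}, tracking the interpretations of the new constants through the constructions used there. I would first check soundness: the expanded four-element structure satisfies each added rule, since in $\DMfour$ we have $\True \in \{ \True \}$, $\Neither \vee \Both = \True \in \{ \True \}$, and $\Neither$, $\Both$ are fixpoints of De~Morgan negation.

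For completeness, let $\triple{\alg{A}}{E}{\approx}$ be a non-trivial reduced model of the extended logic. By the argument of Theorem~\ref{thm: etl eq}, the relation ${\approx}$ is equality, $\alg{A}$ is a De~Morgan lattice, and $E$ is $\{ \True \}$ or $\emptyset$ (using $\True$ for the top of $\alg{A}$ when it exists). The new rules then pin down the interpretations of the constants in $\alg{A}$: $\emptyset \vdash \ExTrue(\True)$ forces $\True \in E$, whence $E = \{ \True \}$ and $\True$ is the top of $\alg{A}$; $\emptyset \vdash \ExTrue(\Neither \vee \Both)$ forces $\Neither \vee \Both$ to be the top; and the equations $\Neither \approx \dmneg \Neither$, $\Both \approx \dmneg \Both$ make these constants fixpoints of negation.

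Next I would use Fact~\ref{fact: si dmas} to subdirectly embed $\alg{A}$ into a product of copies of $\Btwo$, $\Kthree$, $\DMfour$, and invoke the remarks at the end of Subsection~\ref{subsec: dmls} to arrange the decomposition so that each factor becomes the appropriate constant-expansion of these algebras (applying the automorphism of $\DMfour$ that swaps $\Neither$ and $\Both$ in some factors, noting that only $\Kthree$ or $\DMfour$ can occur as a factor once a negation-fixpoint constant is named, and only $\DMfour$ when both such constants are named). Combined with the $\ExTrue$-filter, this gives an embedding of $\triple{\alg{A}}{\{\True\}}{=}$ into a power of $\triple{\DMfour}{\{ \True \}}{=}$ in the expanded signature, so any rule failing in the former also fails in the latter.

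The remaining case $E = \emptyset$ can occur only when $\True$ is absent from the signature and the constants $\Neither$, $\Both$ are not both present, since otherwise the new axioms would make $E$ non-empty. Adapting the final step of Theorem~\ref{thm: etl eq}, I would embed $\triple{\DMfour}{\emptyset}{=}$ into $\triple{\DMfour}{\{ \True \}}{=}^{2}$ via $x \mapsto \pair{c}{x}$, choosing $c \assign \Neither$ when $\Neither$ is in the signature, $c \assign \Both$ when $\Both$ is, and $c$ arbitrary in $\{\Neither,\Both\}$ otherwise; this preserves the named constants, and the image still has empty $\ExTrue$-filter because $c \neq \True$. The main subtlety lies precisely in this last alignment step and in arranging the $\Kthree$ subdirect factors to match the intended interpretations of $\Neither$ and $\Both$, which is exactly what the observations on De~Morgan lattices with constants in Subsection~\ref{subsec: dmls} are designed to deliver.
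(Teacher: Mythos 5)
Your proposal is correct and takes essentially the same approach as the paper: the paper likewise rules out $E = \emptyset$ when $\True$ or both of $\Neither$, $\Both$ are named, appeals to the remarks following Fact~\ref{fact: si dmas} to embed $\alg{A}$ into a power of $\DMfour$ in the expanded signature, and chooses the map $x \mapsto \pair{\Neither}{x}$ or $x \mapsto \pair{\Both}{x}$ according to which constant is present. Your write-up just spells out in more detail the constant-tracking that the paper leaves implicit.
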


\begin{proof}
  The axioms allow us to exclude the case $E = \emptyset$ from consideration in the previous proof if either $\True$ or both of the constants $\Neither$, $\Both$ are present in the signature. If only $\Neither$ is present in the signature, we choose the map $x \mapsto \pair{\Neither}{x}$, and if only $\Both$ is present, we choose the map $x \mapsto \pair{\Both}{x}$ in the previous proof. (Recall that by the remarks following Fact~\ref{fact: si dmas}, the algebra $\alg{A}$ embeds into a power of $\DMfour$ even in the signature which includes one of these constants.)
\end{proof}

\begin{theorem}[Completeness theorem for the logic of $\IsTrue$, $\ExTrue$, and $\approx$]\label{thm: bde eq}~\\
  The logic of the structure $\quadruple{\DMfour}{\{ \True, \Both \}}{\{ \True \}}{=}$ in the signature $\{ \IsTrue, \ExTrue, \approx \}$ is axiomatized by extending the logic of the structure $\triple{\DMfour}{\{ \True, \Both \}}{=}$ by the rules
\begin{align*}
  \ExTrue(x) & \vdash x \vee y \approx x, & \ExTrue(x) & \vdash \IsTrue(x).
\end{align*}
\end{theorem}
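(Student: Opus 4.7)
The plan is to follow the template of Theorem~\ref{thm: bd eq}, extended to accommodate the $\ExTrue$ predicate. Let $\logic{L}$ denote the logic axiomatized by the stated rules; soundness for $\quadruple{\DMfour}{\{\True, \Both\}}{\{\True\}}{=}$ is immediate. For a reduced model $\quadruple{\alg{A}}{T}{E}{\approx}$ of $\logic{L}$, Fact~\ref{fact: equality relation} gives ${\approx} = {=}$, and Theorem~\ref{thm: bd eq} gives that $\alg{A}$ is a De~Morgan lattice with lattice filter $T$. The rule $\ExTrue(x) \vdash x \vee y \approx x$ forces any element of $E$ to be the top of $\alg{A}$, while $\ExTrue(x) \vdash \IsTrue(x)$ gives $E \subseteq T$, so either $E = \emptyset$, or $\alg{A}$ has a top $\True \in T$ and $E = \{\True\}$.

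For completeness, suppose a rule $\rho$ fails in a non-trivial reduced model $\quadruple{\alg{A}}{T}{E}{=}$ of this form. Following the proof of Theorem~\ref{thm: bd eq}, I would let $\{U_i\}_{i \in I}$ be the family of proper prime filters on $\alg{A}$ extending $T$, so that $T = \bigcap_{i \in I} U_i$ and $\bigcap_{i \in I} \Leibniz{\alg{A}}{U_i}$ is the equality relation. Setting $E_i \assign \{a \in U_i : \dmneg a \notin U_i\}$, i.e., the preimage of $\{\True\} \subseteq \DMfour$ under $h_{U_i}$, the Leibniz reduct of $\quadruple{\alg{A}}{U_i}{E_i}{\Leibniz{\alg{A}}{U_i}}$ is isomorphic to a substructure of $\quadruple{\DMfour}{\{\True, \Both\}}{\{\True\}}{=}$ (by a direct extension of Fact~\ref{fact: prime leibniz reduct}), and so each such factor structure is a model of $\logic{L}$.

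In the case $E = \{\True\}$, the crux is to verify $E = \bigcap_{i \in I} E_i$. The inclusion $\{\True\} \subseteq \bigcap_i E_i$ is immediate since $\True \in U_i$ and $\dmneg \True \notin U_i$ for each proper prime $U_i$. For the reverse, if $a \in \bigcap_i E_i$ then $a \in T$ and $\dmneg a$ lies in no proper prime filter extending $T$. I would apply the Theorem~\ref{thm: bd eq} axiom $\IsTrue(x) ~ \& ~ \IsTrue(y) \vdash \dmneg x \vee y \approx y$ with $y = a$ and $x$ ranging over $T$ to conclude $\dmneg a \leq x$ for every $x \in T$. The principal filter $[\dmneg a)$ then contains $T$; if $\dmneg a$ were not the bottom of $\alg{A}$, the Pair Extension Lemma (Lemma~\ref{lemma: filter-ideal separation}) would produce a proper prime filter containing $\dmneg a$ and extending $T$, contradicting the choice of $a$. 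Hence $\dmneg a = \dmneg \True$ and $a = \True$. Fact~\ref{fact: intersection of filters} now gives that $\rho$ fails in one of the factors, hence in $\quadruple{\DMfour}{\{\True, \Both\}}{\{\True\}}{=}$.

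For the case $E = \emptyset$, I would augment the family by one extra index $*$ with the degenerate factor $\quadruple{\alg{A}}{\alg{A}}{\emptyset}{\alg{A}^2}$: its Leibniz reduct is the singleton structure $\quadruple{\{*\}}{\{*\}}{\emptyset}{=}$, which embeds into the target via $* \mapsto \Both$ (using that $\{\Both\}$ is a trivial subalgebra of $\DMfour$ contained in $\{\True, \Both\}$ but not in $\{\True\}$), so this extra factor is also a model of $\logic{L}$. The intersection of all factors is then exactly $\quadruple{\alg{A}}{T}{\emptyset}{=}$, and Fact~\ref{fact: intersection of filters} once more produces a factor in which $\rho$ fails. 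The main obstacle is the verification $E = \bigcap_i E_i$ in the $E = \{\True\}$ case; everything else directly parallels the proof of Theorem~\ref{thm: bd eq}.
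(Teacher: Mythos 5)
Your proof is correct, but it follows a genuinely different route from the paper's. The paper splits on $E$ as you do, but then handles $E = \{\True\}$ by a translation trick: it expands $\alg{A}$ by the constant $\True$, observes that $\triple{\alg{A}'}{T}{\approx}$ is a model of the constant-expanded logic of Theorem~\ref{thm: bd eq exp}, replaces every formula $\ExTrue(u)$ in the failing rule by $\True \approx u$, and invokes that theorem; the case $E = \emptyset$ is handled by embedding $\quadruple{\DMfour}{\{\True,\Both\}}{\emptyset}{=}$ into a square of the target via $x \mapsto \pair{\Neither}{x}$ or $x \mapsto \pair{\Both}{x}$. You instead rerun the prime-filter decomposition of Theorem~\ref{thm: bd eq} directly in the richer signature, adjoining the component $E_i = h_{U_i}^{-1}(\{\True\})$, and the genuinely new content is your verification that $\bigcap_i E_i = \{\True\}$: the axiom $\IsTrue(x)~\&~\IsTrue(y) \vdash \dmneg x \vee y \approx y$ gives $T \subseteq [\dmneg a)$ for any $a$ in the intersection, and the Pair Extension Lemma applied against the ideal $\{\dmneg\True\}$ forces $\dmneg a = \dmneg\True$. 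This step checks out, and your restriction to \emph{proper} prime filters is not cosmetic but essential (the improper filter would contribute $E_i = \emptyset$ and destroy the intersection), so it is good that you made it explicit; likewise your degenerate factor $\quadruple{\alg{A}}{\alg{A}}{\emptyset}{\alg{A}^2}$ for the case $E = \emptyset$ is legitimately a model because its Leibniz reduct is the one-point substructure on $\{\Both\}$. What each approach buys: yours is self-contained, uniform with the other completeness proofs of the paper, and independent of the constant-expansion Theorem~\ref{thm: bd eq exp} (whose proof is itself fairly long); the paper's is shorter given that Theorem~\ref{thm: bd eq exp} is already available, and it illustrates the reusable idea that $\ExTrue$ can be defined away by an equation once a constant for $\True$ is present.
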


\begin{proof}
  Consider a rule $\Gamma \vdash \varphi$ which fails in a non-trivial reduced model $\quadruple{\alg{A}}{T}{E}{\approx}$ of our set of rules. Then either $E = \{ \True \}$ or $E = \emptyset$. If $E =\emptyset$, then the proof of Theorem~\ref{thm: bd eq} shows that $\Gamma \vdash \varphi$ also fails in the structure $\quadruple{\DMfour}{\{ \True, \Both \}}{\emptyset}{\approx}$, which embeds into $\quadruple{\DMfour}{\{ \True, \Both\}}{\{ \True \}}{=} \times \quadruple{\DMfour}{\{ \True, \Both\}}{\{ \True \}}{=}$ via the map $x \mapsto \pair{\Neither}{x}$ or via the map $x \mapsto \pair{\Both}{x}$. The rule therefore also fails in $\quadruple{\DMfour}{\{ \True, \Both\}}{\{ \True \}}{=}$.

  On the other hand, suppose that $E = \{ \True \}$ and consider the expansion $\alg{A}'$ of $\alg{A}$ by the constant $\True$. Then the structure $\triple{\alg{A}'}{T}{\approx}$ is a model of the expansion of the logic of truth and material equivalence by the constant $\True$ axiomatized in Theorem~\ref{thm: bd eq exp}: the validity of $\ExTrue(x) \vdash x \vee y \approx x$ yields the validity of $\emptyset \vdash \True \approx \True \vee x$, and similarly the rule $\ExTrue(x) \vdash \IsTrue(x)$ yields $\emptyset \vdash \IsTrue(\True)$. Finally, the rule $\IsTrue(\dmneg \True) \vdash x \approx y$ holds in $\triple{\alg{A}'}{T}{\approx}$ because $\dmneg \True \in T$ implies $a \in T$ for each $a \in T$, therefore $a \vee b = b$ for all $a, b \in \alg{A}$ thanks to the rule $\IsTrue(x) ~ \& ~ \IsTrue(y) \vdash \dmneg x \vee y \approx y$.

  Now let $\Gamma' \vdash \varphi'$ be the rule obtained from $\Gamma \vdash \varphi$ by replacing each formula of the form $\ExTrue(u)$ by $\True \approx u$. The rule $\Gamma' \vdash \varphi'$ fails in $\triple{\alg{A}'}{T}{\approx}$, therefore it fails in $\triple{\DMfour}{\{ \True, \Both \}}{\approx}$ by Theorem~\ref{thm: bd eq exp}. Translating the rule back into its original form, it~follows that $\Gamma \vdash \varphi$ fails in $\quadruple{\DMfour}{\{ \True, \Both \}}{\{ \True \}}{\approx}$.
\end{proof}

\begin{theorem}[Completeness theorem for the logic of $\IsTrue$, $\ExTrue$, and $\approx$]\label{thm: bde eq exp}~\\
  In order to axiomatize the expansion of the above logic by some of the constants $\True$, $\Neither$, $\Both$, it suffices to add the appropriate subset of the following rules:
\begin{align*}
  \emptyset & \vdash \ExTrue(\True), & \emptyset & \vdash \Neither \approx \dmneg \Neither, & \IsTrue(\Neither \vee x) & \vdash \IsTrue(x), \\
  \emptyset & \vdash \ExTrue(\Neither \vee \Both), & \emptyset & \vdash \IsTrue(\Both \wedge \dmneg \Both), & \IsTrue(x) & \vdash \ExTrue(\Neither \vee x),
\end{align*}
\end{theorem}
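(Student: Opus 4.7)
The plan is to extend the proof of Theorem~\ref{thm: bde eq} to cover the constants, following the same reduction-and-translation strategy while upgrading every invocation of Theorem~\ref{thm: bd eq} to its constants expansion Theorem~\ref{thm: bd eq exp}. Let $\logic{L}$ denote the axiomatically defined logic in the expanded signature, and let $\Gamma \vdash \varphi$ be a rule failing in some non-trivial reduced model $\quadruple{\alg{A}}{T}{E}{\approx}$ of $\logic{L}$. As in Theorem~\ref{thm: bde eq}, we conclude that $\alg{A}$ is a De~Morgan lattice in which $\approx$ is the equality relation, $T$ is a lattice filter, and $E \in \{\emptyset,\{\True\}\}$; moreover the constants present in the signature are interpreted as expected, with filter-membership rules such as $\emptyset \vdash \IsTrue(\Both \wedge \dmneg \Both)$ and $\IsTrue(\Neither \vee x) \vdash \IsTrue(x)$ ensuring that $\Both \in T$, $\dmneg \Both \in T$, and $\Neither \notin T$ whenever the corresponding constant is present.

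In the case $E = \emptyset$, the rules $\emptyset \vdash \ExTrue(\True)$ and $\emptyset \vdash \ExTrue(\Neither \vee \Both)$ show that the signature cannot contain $\True$ and cannot contain both of $\Neither,\Both$. The argument of Theorem~\ref{thm: bde eq} (now using Theorem~\ref{thm: bd eq exp} on the $\{\IsTrue,\approx\}$-reduct expanded by the present constants) then yields that $\Gamma \vdash \varphi$ fails in the expansion of $\quadruple{\DMfour}{\{\True,\Both\}}{\emptyset}{=}$ by the relevant constants. This structure embeds into a square of $\quadruple{\DMfour}{\{\True,\Both\}}{\{\True\}}{=}$ via $x \mapsto \pair{\Neither}{x}$ or $x \mapsto \pair{\Both}{x}$; we pick whichever map preserves the constant that is actually in the signature, using that at most one of $\Neither,\Both$ is present.

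In the case $E = \{\True\}$, we adjoin the constant $\True$ to $\alg{A}$ if it is not already there to obtain $\alg{A}'$, and translate each atomic subformula $\ExTrue(u)$ in $\Gamma \vdash \varphi$ to $\True \approx u$, yielding a rule $\Gamma' \vdash \varphi'$ in the signature of Theorem~\ref{thm: bd eq exp} with $\True$ together with the original constants. The bulk of the work is to verify that $\triple{\alg{A}'}{T}{\approx}$ satisfies every axiom from Theorem~\ref{thm: bd eq exp} applicable to this signature, which is routine thanks to the base rules $\ExTrue(x) \vdash x \vee y \approx x$ and $\ExTrue(x) \vdash \IsTrue(x)$ of Theorem~\ref{thm: bde eq} combined with the constant-rules of the present theorem; for example $\emptyset \vdash \ExTrue(\True)$ delivers both $\emptyset \vdash \True \approx \True \vee x$ and $\emptyset \vdash \IsTrue(\True)$, $\emptyset \vdash \ExTrue(\Neither \vee \Both)$ delivers $\emptyset \vdash \Both \vee \Neither \approx \True$, $\emptyset \vdash \IsTrue(\Both \wedge \dmneg \Both)$ delivers $\emptyset \vdash \IsTrue(\Both)$ and $\emptyset \vdash \IsTrue(\dmneg \Both)$, and $\IsTrue(x) \vdash \ExTrue(\Neither \vee x)$ delivers $\IsTrue(x) \vdash \Neither \vee x \vee y \approx \Neither \vee x$. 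The axiom $\IsTrue(\dmneg \True) \vdash x \approx y$ is then verified model-theoretically exactly as in Theorem~\ref{thm: bd eq exp}: if $\dmneg \True \in T$ then using $\IsTrue(\True)$ and $\IsTrue(x) ~ \& ~ \IsTrue(y) \vdash \dmneg x \vee y \approx y$ forces $\True \approx \dmneg \True$, collapsing $\alg{A}$ to a singleton.

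The main obstacle is the case analysis: each subset of $\{\True,\Neither,\Both\}$ dictates a different list of axioms to verify under the translation, and one must confirm that the present theorem's rules suffice to derive precisely the matching subset of Theorem~\ref{thm: bd eq exp}. Once this bookkeeping is complete, Theorem~\ref{thm: bd eq exp} applied to $\Gamma' \vdash \varphi'$ shows that it fails in $\triple{\DMfour}{\{\True,\Both\}}{=}$ expanded by $\True$ and the original constants, and translating $\True \approx u$ back to $\ExTrue(u)$ yields failure of $\Gamma \vdash \varphi$ in $\quadruple{\DMfour}{\{\True,\Both\}}{\{\True\}}{=}$ expanded by the original constants, which is the desired conclusion.
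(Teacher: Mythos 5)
Your proposal follows the same route as the paper's (very terse) proof: reduce to the argument of Theorem~\ref{thm: bde eq}, check that the added constant rules make $\triple{\alg{A}'}{T}{\approx}$ a model of the appropriate expansion axiomatized in Theorem~\ref{thm: bd eq exp}, and in the case $E=\emptyset$ choose the embedding $x\mapsto\pair{\Neither}{x}$ or $x\mapsto\pair{\Both}{x}$ according to which constants are present. Your explicit derivations of the translated axioms (e.g.\ $\ExTrue(\True)$ yielding both $\True\approx\True\vee x$ and $\IsTrue(\True)$, and $\IsTrue(x)\vdash\ExTrue(\Neither\vee x)$ yielding $\IsTrue(x)\vdash\Neither\vee x\vee y\approx\Neither\vee x$) are precisely the bookkeeping the paper leaves implicit, so the argument is correct.
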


\begin{proof}
  The theorem holds by the same reasoning as in Theorem~\ref{thm: bde eq}. The~additional rules ensure that the structure $\triple{\alg{A}'}{T}{\approx}$ in the proof of Theorem~\ref{thm: bde eq} is a model of the appropriate expansion of the logic of the structure $\triple{\DMfour}{\{ \True, \Both \}}{\approx}$ axiomatized by Theorems~\ref{thm: bd eq} and \ref{thm: bd eq exp}. (If $E = \emptyset$, then $\True$ is not present in the signature and neither is at least one of $\Neither$, $\Both$. We pick the map $x \mapsto \pair{\Both}{x}$ or $x \mapsto \pair{\Neither}{x}$ accordingly.)
\end{proof}

  Let us use the abbreviation $t \inequals u$ for $t \vee u \approx u$ in the following theorem.

\begin{theorem}[Completeness theorem for the logic of $\IsTrue$, $\NonFalse$, and $\approx$]\label{thm: bdnf eq}~\\
  The logic of the structure $\quadruple{\DMfour}{\!\{ \True,\!\Both \}}{\!\{ \True,\!\Neither \}}{\!=}$ in the signature $\{ \IsTrue, \NonFalse, \approx \}$ is axiomatized by the rules of $\BD$ for $\IsTrue$, the rules of $\BD$ for $\NonFalse$, the rules of De~Morgan lattices for $\approx$, and
\begin{align*}
  \IsTrue(x) ~ \& ~ \IsTrue(y) & \vdash \dmneg x \vee y \approx y, \\
  \NonFalse(x) ~ \& ~ \IsTrue(\dmneg x \vee y) & \vdash \IsTrue(y), \\
  \IsTrue(x) ~ \& ~ \NonFalse(\dmneg x \vee y) & \vdash \NonFalse(y),
\end{align*}
\begin{align*}
  \NonFalse(x) ~ \& ~ \IsTrue(y) ~ \& ~ \IsTrue(z) ~ \& ~ x \wedge y \inequals z & \vdash y \inequals z, \\
  \IsTrue(x) ~ \& ~ \NonFalse(y) ~ \& ~ x \wedge u \inequals \dmneg y \vee v ~ \& ~ x \wedge \dmneg u \inequals \dmneg y \vee v & \vdash u \inequals v.
\end{align*}
  Each reduced model of this logic has the form $\quadruple{\alg{A}}{T}{NF}{=}$, where $\alg{A}$ is a De~Morgan lattice and $T$ and $NF$ are lattice filters on $\alg{A}$ such that $T \cap NF = \{ \True \}$ or $T \cap NF = \emptyset$.
\end{theorem}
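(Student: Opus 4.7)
The plan is to combine the strategy of Theorem~\ref{thm: bdnf} (for $\IsTrue$ and $\NonFalse$ without equality) with that of Theorem~\ref{thm: bd eq} (for $\IsTrue$ with equality). Let $\logic{L}$ denote the logic axiomatized by the listed rules.

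For the structural direction, consider a reduced model $\quadruple{\alg{A}}{T}{NF}{\approx}$ of $\logic{L}$. The generic congruence and compatibility rules for $\approx$ are subsumed by the De~Morgan lattice axiomatization in the signature with $\approx$, so Fact~\ref{fact: equality relation} forces $\approx$ to be literal equality. Under this identification, the analysis from the proof of Theorem~\ref{thm: bdnf} applies: $\alg{A}$ is a De~Morgan lattice (as a subdirect product of $\alg{A} / \Leibniz{\alg{A}}{T}$ and $\alg{A} / \Leibniz{\alg{A}}{NF}$), $T$ and $NF$ are lattice filters, and the two $\IsTrue/\NonFalse$ interaction rules guarantee that $T \cap NF$ is either $\emptyset$ or $\{ \True \}$.

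Routine inspection confirms that $\quadruple{\DMfour}{\{ \True, \Both \}}{\{ \True, \Neither \}}{=}$ is a model of $\logic{L}$. For the converse, suppose a rule fails in some non-trivial reduced model $\quadruple{\alg{A}}{T}{NF}{=}$. The key separation claim I would establish is: for each $a < b$ in $\alg{A}$ there exist prime filters $T' \supseteq T$ and $NF' \supseteq NF$ with $NF' = \alg{A} \setminus \dmneg[T']$ such that $\pair{a}{b} \notin \Leibniz{\alg{A}}{T'} \cap \Leibniz{\alg{A}}{NF'}$. Granted this, one verifies (as in the proof of Theorem~\ref{thm: bd eq}) that the identity relation on $\alg{A}$ is the intersection of $\Leibniz{\alg{A}}{T'_{i}} \cap \Leibniz{\alg{A}}{NF'_{i}}$ over a suitable indexed family of such pairs, while $T$ and $NF$ are the intersections of the corresponding $T'_{i}$ and $NF'_{i}$. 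Fact~\ref{fact: intersection of filters} then pushes the failing rule into a single structure $\quadruple{\alg{A}}{T'_{i}}{NF'_{i}}{\Leibniz{\alg{A}}{T'_{i}} \cap \Leibniz{\alg{A}}{NF'_{i}}}$, and Fact~\ref{fact: prime leibniz reduct 2} embeds its Leibniz reduct into $\quadruple{\DMfour}{\{ \True, \Both \}}{\{ \True, \Neither \}}{=}$.

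The real work is the separation claim, proved by a case analysis paralleling Theorem~\ref{thm: bd eq} but coordinated across two filters. One may reduce to the case $a \in T \iff b \in T$, $\dmneg a \in T \iff \dmneg b \in T$, $a \in NF \iff b \in NF$, and $\dmneg a \in NF \iff \dmneg b \in NF$, since otherwise Lemma~\ref{lemma: filter-ideal separation} applied directly to $T$ or $NF$ already yields a separating prime pair. The rule $\IsTrue(x) ~ \& ~ \IsTrue(y) \vdash \dmneg x \vee y \approx y$ handles the case where $a, b, \dmneg a, \dmneg b \in T$ (and its $NF$-analogue). The rule $\NonFalse(x) ~ \& ~ \IsTrue(y) ~ \& ~ \IsTrue(z) ~ \& ~ x \wedge y \inequals z \vdash y \inequals z$, together with its De~Morgan dual, handles the mixed cases in which witnesses in one filter can be combined with negations from the other to force $a = b$. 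Finally, the quadruple rule $\IsTrue(x) ~ \& ~ \NonFalse(y) ~ \& ~ x \wedge u \inequals \dmneg y \vee v ~ \& ~ x \wedge \dmneg u \inequals \dmneg y \vee v \vdash u \inequals v$ is tailored to the residual case where $a, b$ and their negations all lie outside both filters. The main obstacle is precisely this multi-case bookkeeping: one has to check that each obstruction to extending $T$ (respectively $NF$) by an appropriate separating element while staying disjoint from an ideal involving the other filter is contradicted by exactly one of the five bespoke rules. Once the taxonomy closes, the remainder of the argument is filter-theoretic in the style of Theorem~\ref{thm: bdnf} and Leibniz-congruence-theoretic in the style of Theorem~\ref{thm: bd eq}, introducing no genuinely new ideas.
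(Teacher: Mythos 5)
Your overall strategy coincides with the paper's: reduce $\approx$ to genuine equality via Fact~\ref{fact: equality relation}, carry over the structural analysis of Theorem~\ref{thm: bdnf} to get the form of reduced models, establish a separation claim producing for each $a < b$ a prime filter $U \supseteq T$ disjoint from $\dmneg[NF]$ with $\pair{a}{b} \notin \Leibniz{\alg{A}}{U}$ (your pair $T' \supseteq T$, $NF' = \alg{A} \setminus \dmneg[T'] \supseteq NF$ is exactly this, since $\Leibniz{\alg{A}}{T'} = \Leibniz{\alg{A}}{NF'}$), write the model as an intersection of the resulting structures, and finish with Facts~\ref{fact: intersection of filters} and~\ref{fact: prime leibniz reduct 2}. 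So the skeleton is the right one and matches the paper.

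The gap is that the actual content of the proof --- the case analysis verifying that the five interaction rules refute every obstruction --- is deferred rather than carried out, and the one reduction step you do spell out is incorrect as stated. You claim that when $a$ and $b$ disagree in membership in $T$ (or in $NF$) the Pair Extension Lemma ``applied directly to $T$ or $NF$'' already yields a separating prime pair. It does not: the prime filter $U \supseteq T$ omitting $a$ must in addition be disjoint from $\dmneg[NF]$, otherwise $\alg{A} \setminus \dmneg[U]$ need not contain $NF$; so one must first show that the ideal generated by $a$ together with $\dmneg[NF]$ is disjoint from $T$, which is exactly what the rule $\NonFalse(x) ~\&~ \IsTrue(\dmneg x \vee y) \vdash \IsTrue(y)$ (and symmetrically $\IsTrue(x) ~\&~ \NonFalse(\dmneg x \vee y) \vdash \NonFalse(y)$) is needed for. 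This same coordination with the other filter is what makes the remaining cases nontrivial: in the paper the split is on $T$-membership of $a, \dmneg a, b, \dmneg b$ alone (not the finer split over both filters you propose), every obstruction takes the form $f \wedge u \leq \dmneg g \vee v$ with $f \in T$ and $g \in NF$, and these are refuted by $\NonFalse(x) ~\&~ \IsTrue(y) ~\&~ \IsTrue(z) ~\&~ x \wedge y \inequals z \vdash y \inequals z$ or by the five-premise rule. There is no ``De~Morgan dual'' of the former rule in the axiomatization; the symmetric case is handled by applying the same argument to $\dmneg b < \dmneg a$. Until this bookkeeping is actually closed, the assertion that these particular five rules suffice is unverified, and that verification is the substance of the theorem.
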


\begin{proof}
  Let $\logic{L}$ be the logic axiomatized by the given rules and $\quadruple{\alg{A}}{T}{NF}{\approx}$ be a reduced model of $\logic{L}$. Then $\alg{A}$ is a De~Morgan lattice, $T$ and $NF$ are lattice filters on~$\alg{A}$, and $\approx$ is the equality relation on $\alg{A}$ by Theorem~\ref{thm: bd eq}. Moreover, $T \cap NF$ is a singleton if non-empty because the rule $\NonFalse(x) ~ \& ~ \IsTrue(y) ~ \& ~ \IsTrue(z) ~ \& ~ x \wedge y \inequals z \vdash y \inequals z$ implies that $a \leq b$ whenever $a, b \in T \cap NF$ (take $x \assign b$, $y \assign a$, $z \assign b$).

  The structure $\quadruple{\DMfour}{\{ \True, \Both \}}{\{ \True, \Neither \}}{=}$ is a model of $\logic{L}$. Conversely, we prove that if a rule fails in a non-trivial reduced model $\quadruple{\alg{A}}{T}{NF}{\approx}$ of $\logic{L}$, then it fails in $\quadruple{\DMfour}{\{ \True, \Both \}}{\{ \True, \Neither \}}{=}$.

  Let $\set{U_{i}}{i \in I}$ be the set of all prime filters extending $T$ such that $U_{i}$ is disjoint from $\alg{A} \setminus \dmneg[NF]$. We claim that
\begin{align*}
  \quadruple{\alg{A}}{T}{NF}{\approx} = \biggl< \alg{A}, \bigcap_{i \in I} U_{i}, \bigcap_{i \in I} (\alg{A} \setminus \dmneg [U_{i}]), \bigcap_{i \in I} (\Leibniz{\alg{A}} U_{i}) \biggr>.
\end{align*}
  From this claim it follows that each rule which fails in $\quadruple{\alg{A}}{T}{NF}{\approx}$ also fails in one of the structures $\quadruple{\alg{A}}{U_{i}}{\alg{A} \setminus \dmneg[U_{i}]}{\Leibniz{\alg{A}}{U_{i}}}$. But the Leibniz reduct of such a structure is isomorphic to a substructure of $\quadruple{\DMfour}{\{ \True, \Both \}}{\{ \True, \Neither \}}{=}$ by Fact~\ref{fact: prime leibniz reduct 2}.

  In order to prove that $T = \bigcap_{i \in I} U_{i}$, it suffices to prove that for each $a \notin T$ the ideal generated by $\dmneg[NF]$ and $a$ is disjoint from $T$. This is, as in the previous proofs, ensured by the rule $\NonFalse(x), \IsTrue(\dmneg x \vee y) \vdash \IsTrue(y)$. In order to prove that $NF = \bigcap_{i \in I} \alg{A} \setminus \dmneg [U_{i}]$, it suffices to prove that $NF$ is the intersection of all prime filters $V_{i} \supseteq NF$ such that $V_{i} \cap \dmneg[T] = \emptyset$, since we can then take $U_{i} = \alg{A} \setminus \dmneg [V_{i}]$. But this is, as in the previous proofs, ensured by the rule $\IsTrue(x), \NonFalse(\dmneg x \vee y) \vdash \IsTrue(y)$.

  It now suffices to prove that for each $a < b$ in $\alg{A}$ there is some prime filter $U \supseteq T$ disjoint from $\dmneg[NF]$ such that either $b \in U$ and $a \notin U$ or $\dmneg a \in U$ and $\dmneg b \notin U$.

  Firstly, suppose that $b \in T$ and $a \notin T$. Then the filter $T$ is disjoint from the ideal generated by $\dmneg[NF]$ and $a$ thanks to the rule $\NonFalse(x) ~ \& ~ \IsTrue(\dmneg x \vee y) \vdash \IsTrue(y)$, hence there is a prime filter $U \supseteq T$ disjoint from $\dmneg[NF]$ such that $a \notin U$. We~can therefore assume that $a \in T \iff b \in T$, and likewise that $\dmneg a \in T \iff \dmneg b \in T$.

  We now construct the prime filter~$U$ case by case:

  (i) If $a, \dmneg a, b, \dmneg b \in T$, then $a = b$ because $\dmneg (\dmneg a) \leq b$ and $\dmneg (\dmneg b) \leq a$ thanks to the rule $\IsTrue(x) ~ \& ~ \IsTrue(y) \vdash \dmneg x \vee y \approx y$.

  (ii) If $a, b \in T$ and $\dmneg a, \dmneg b \notin T$, then there is some filter $T' \supseteq T$ such that $\dmneg a \in T'$ and $T'$ is disjoint from the ideal generated by $\dmneg[NF]$ and $\dmneg b$. Otherwise there are $f \in T$ and $g \in NF$ such that $f \wedge \dmneg a \leq \dmneg g \vee \dmneg b$. But $f \wedge \dmneg a = \dmneg a$ thanks to the rule $\IsTrue(x) ~ \& ~ \IsTrue(y) \vdash \dmneg x \vee y \approx y$, therefore $\dmneg a \leq \dmneg g \vee \dmneg b$ and $g \wedge b \leq a$. But then $b \leq a$ thanks to the rule $\NonFalse(x) ~ \& ~ \IsTrue(y) ~ \& ~ \IsTrue(z) ~ \& ~ x \wedge y \inequals z \vdash y \inequals z$. Extending $T'$ to a prime filter $U$ disjoint from $\dmneg [NF]$ and $b$ yields the desired filter.
   
  (iii) If $a, b \notin T$ and $\dmneg a, \dmneg b \in T$, then by (ii) with $a, b$ and $\dmneg a, \dmneg b$ exchanged, there is some prime filter $U \supseteq T$ such that $b \in U$ and $a \notin U$.

  (iv) If $a, \dmneg a, b, \dmneg b \notin T$, then there is some filter $T' \supseteq T$ such that either $b \in T'$ and $T'$ is disjoint from the ideal generated by $\dmneg[NF]$ and $a$ or $\dmneg a \in T'$ and $T'$ is disjoint from the ideal generated by $\dmneg[NF]$ and $\dmneg b$. Otherwise there are $f \in T$ and $g \in NF$ such that $f \wedge b \leq \dmneg g \vee a$ and $f \wedge \dmneg a \leq \dmneg g \vee \dmneg b$. But then $b \leq a$ by one of the rules of $\logic{L}$. Extending $T'$ to a prime filter $U$ disjoint from $\dmneg [NF]$ and $a$ (in the former case) or $\dmneg b$ (in the latter case) yields the desired filter.
\end{proof}

\begin{theorem}\label{thm: bdnf eq exp}
  In order to axiomatize the expansion of the above logic by some of the constants $\True$, $\Neither$, $\Both$, it suffices to add the appropriate subset of the following rules:
\begin{align*}
  \emptyset & \vdash \IsTrue(\True), & \emptyset & \vdash \IsTrue(\Both), & \NonFalse(\Neither), \\
  \emptyset & \vdash \NonFalse(\True), & \emptyset & \vdash \IsTrue(\dmneg \Both), & \NonFalse(\dmneg \Neither).
\end{align*}
\end{theorem}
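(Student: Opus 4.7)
The plan is to extend the proof of Theorem~\ref{thm: bdnf eq} in the same manner as Theorem~\ref{thm: bdnf exp} extends Theorem~\ref{thm: bdnf}, namely by checking that the additional constant rules force the constants to lie in the correct slots of the constructed prime filters. Concretely, given a non-trivial reduced model $\quadruple{\alg{A}}{T}{NF}{\approx}$ of the expanded logic~$\logic{L}$ and a rule that fails in it, I would reuse verbatim the argument producing, for each $a < b$ in $\alg{A}$, a prime filter $U \supseteq T$ with $U \cap \dmneg[NF] = \emptyset$ that separates $a$ and $b$ (or $\dmneg a$ and $\dmneg b$). The only thing left to verify is that, once we replace $T$ and $NF$ by $U$ and $NF' \assign \alg{A} \setminus \dmneg[U]$, the constants $\True$, $\Neither$, $\Both$ are sent to the intended elements of $\DMfour$ under the homomorphism $h_{U}$ of Fact~\ref{fact: hf}.

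This reduces to four bookkeeping checks. Since $\True \in T \subseteq U$ and $\True \in NF$, we have $\dmneg \True \in \dmneg[NF]$, so $\dmneg \True \notin U$; hence $h_{U}(\True) = \True$. Since $\Both \in T$ and $\dmneg \Both \in T$ (via $\emptyset \vdash \IsTrue(\Both)$ and $\emptyset \vdash \IsTrue(\dmneg \Both)$), both lie in $U$, so $h_{U}(\Both) = \Both$. Since $\Neither \in NF$ and $\dmneg \Neither \in NF$ (via $\emptyset \vdash \NonFalse(\Neither)$ and $\emptyset \vdash \NonFalse(\dmneg \Neither)$), we have $\dmneg \Neither, \Neither \in \dmneg[NF]$, so neither lies in $U$, giving $h_{U}(\Neither) = \Neither$. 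Consequently, the Leibniz reduct of $\quadruple{\alg{A}}{U}{NF'}{\Leibniz{\alg{A}}{U}}$ is isomorphic, via $h_{U}$, to a substructure of $\quadruple{\DMfour}{\{\True,\Both\}}{\{\True,\Neither\}}{=}$ \emph{in the expanded signature}, by virtue of Fact~\ref{fact: prime leibniz reduct 2}.

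The remainder of the argument, namely the representation $\quadruple{\alg{A}}{T}{NF}{\approx} = \langle \alg{A}, \bigcap U_{i}, \bigcap (\alg{A} \setminus \dmneg[U_{i}]), \bigcap \Leibniz{\alg{A}}{U_{i}} \rangle$ as an intersection of such prime-filter structures and the appeal to Fact~\ref{fact: intersection of filters} together with Fact~\ref{fact: leibniz}, goes through unchanged because each constant lies in (or outside of) each $U_{i}$ and each $NF'_{i}$ uniformly by the above checks. I~expect no substantial obstacle: as in Theorem~\ref{thm: bdnf exp}, the chosen rules are precisely the minimal rules forcing the constants into the correct slots under $h_{U}$, and no modification of the filter-and-ideal construction (of the sort required for $\Neither$ in Theorem~\ref{thm: bde exp} or Theorem~\ref{thm: bd eq exp}) is needed here, because the rules used are all axioms rather than proper rules, so the constants are directly forced into $T$ or $NF$ from the outset.
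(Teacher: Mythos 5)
Your proposal is correct and follows essentially the same route as the paper, which likewise just observes that the added axioms force each constant into (or out of) $U$ and $\alg{A} \setminus \dmneg[U]$ as required, so that $h_{U}$ respects the constants and the embedding of the Leibniz reduct into $\quadruple{\DMfour}{\{\True,\Both\}}{\{\True,\Neither\}}{=}$ survives the signature expansion. Your additional observation that no modification of the filter--ideal construction is needed here (unlike for $\Neither$ in Theorems~\ref{thm: bde exp} and~\ref{thm: bd eq exp}), because every added rule is an axiom placing the constant directly in $T$ or $NF$, is exactly the point the paper's one-line proof is implicitly relying on.
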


\begin{proof}
  These rules ensure that in the previous proof the constants belong or do not belong to $U$ and $\alg{A} \setminus \dmneg [U]$ as appropriate for each constant.
\end{proof}

  Alternatively, we can exploit the fact that the set $\{ \True, \Neither \} \subseteq \DMfour$ is definable by the equation $x \wedge \Neither \approx \Neither$ in the presence of the constant $\Neither$ and the set $\{ \True, \Both \} \subseteq \DMfour$ is definable by the equation $x \wedge \Both \approx \Both$ in the presence of the constant $\Both$. In the former case, we can identify the logic of truth, non-falsity, and material equivalence with an appropriate expansion of the logic of truth and equality. In the latter case, we can identify the logic of truth, non-falsity, and material equivalence with an appropriate expansion of the logic of non-falsity and equality.

\section{Multiple-conclusion completeness}
\label{sec: mc}

  Finally, we consider the problem of axiomatizing the multiple-conclusion versions of the above logics. In multiple-conclusion logics, rules have the form $\Gamma \vdash \Delta$, where $\Gamma$~is interpreted conjunctively and $\Delta$ disjunctively. If $\Gamma = \{ \gamma_{1}, \dots, \gamma_{m} \}$ and $\Delta = \{ \delta_{1}, \dots, \delta_{n} \}$, we write $\gamma_{1} ~ \& ~ \dots ~ \& ~ \gamma_{m} \vdash \delta_{1} \text{ or } \dots \text{ or } \delta_{n}$. The semantics of single-conclusion logics extends straightforwardly to such rules. The rule
\begin{align*}
  \NonFalse(x \vee y) \vdash \NonFalse(x) \text{ or } \NonFalse(y)
\end{align*}
  expresses the fact that if $x \vee y$ is non-false, then either $x$ is non-false or $y$ is non-false. Similarly, the rule
\begin{align*}
  \IsTrue(\dmneg x) ~ \& ~ \NonFalse(x) & \vdash \emptyset
\end{align*}
  expresses the fact that if $x$ is non-false, then $\dmneg x$ is not true. Just like single-conclusion rules correspond to strict universal Horn sentences (without equality), multiple-conclusion rules correspond to universal sentences (without equality), at least if $\Gamma$ and $\Delta$ are finite. That is, each multiple-conclusion rule $\Gamma \vdash \Delta$ corresponds to a universally quantified disjunction of atomic formulas and negated atomic formulas. For example, the rule $\NonFalse(x \vee y) \vdash \NonFalse(x) \text{ or } \NonFalse(y)$ corresponds to the universal sentence
\begin{align*}
  \forall x \, \forall y \, (\neg \NonFalse(x \vee y) \vee \NonFalse(x) \vee \NonFalse(y)).
\end{align*}

  Axiomatizing the multiple-conclusion versions of most of the logics studied above turns out to be a trivial task. If the relational signature contains the equality symbol, it is trivial because the property of being isomorphic to a substructure of a given finite structure is axiomatizable by universal sentences with equality.

  If the equality symbol is not part of the signature, but at least one of the predicates $\IsTrue$ or $\NonFalse$ is, say $\IsTrue$, then the task is trivial because the other predicates are definable in terms of this predicate:
\begin{align*}
  a \in \{ \True \} & \iff a \in \{ \True, \Both \} ~ \& ~ a \notin \{ \True, \Both \}, &
  a \in \{ \True, \Neither \} & \iff \dmneg a \notin \{ \True, \Both \}.
\end{align*}
  In a multiple-conclusion setting, these equivalences can be expressed by the rules
\begin{gather*}
  \ExTrue(x) \vdash \IsTrue(x), \\
  \IsTrue(\dmneg x) ~ \& ~ \ExTrue(x) \vdash \emptyset, \\
  \IsTrue(x) \vdash \ExTrue(x) \text{ or } \IsTrue(\dmneg x),
\end{gather*}
\begin{align*}
  \emptyset \vdash \IsTrue(x) \text{ or } \NonFalse(\dmneg x), \\
  \IsTrue(x) ~ \& ~ \NonFalse(\dmneg x) \vdash \emptyset.
\end{align*}
  In a structure which satisfies these rules, each multiple-conclusion rule is in fact equivalent to a multiple-conclusion rule in the signature $\{ \IsTrue \}$, meaning that one rule holds if and only if the other does. But~we know how to axiomatize the multiple-conclusion version of Belnap--Dunn logic: it suffices to add the rule $\IsTrue(x \vee y) \vdash \IsTrue(x) \text{ or } \IsTrue(y)$ to a single-conclusion axiomatization.

  The only relational signature for which the problem of axiomatizing the multiple-conclusion logic is non-trivial is therefore $\{ \ExTrue \}$. An axiomatization for the multiple-conclusion version of Exactly True Logic, possibly extended by some of the constants $\True$, $\Neither$, $\Both$, is thus given below. This completes our review of relational variants of the four-valued logic of Belnap and Dunn.

\begin{theorem}[Completeness theorem for the multiple-conclusion logic of $\ExTrue$]\label{thm: etl mc}
  The multiple-conclusion logic of the structure $\pair{\DMfour}{\{ \True \}}$ in the signature $\{ \ExTrue \}$ is axiomatized by the extending an axiomatization of the single-conclusion logic of this structure by the rules
\begin{align*}
  x \vee y & \vdash \dmneg x \vee \dmneg y ~\mathrm{or}~ x ~\mathrm{or}~y, \\
  x \vee y & \vdash \dmneg x \vee y ~\mathrm{or}~ x ~\mathrm{or}~y,
\end{align*}
\begin{align*}
  (u \wedge \dmneg u) \vee x ~ \& ~ (u \wedge \dmneg u) \vee y ~ \& ~ v \vee x & \vdash v \vee y~\mathrm{or}~x~\mathrm{or}~y, \\
  (u \wedge \dmneg u) \vee x ~ \& ~ (u \wedge \dmneg u) \vee y ~ \& ~ v \vee \dmneg x & \vdash v \vee \dmneg y~\mathrm{or}~x~\mathrm{or}~y,
\end{align*}
  where for reasons of space we omit the predicate $\ExTrue$ around each term.
\end{theorem}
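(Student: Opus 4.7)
The plan is to follow the template of the preceding completeness theorems. First I would verify that the four new rules hold in $\pair{\DMfour}{\{ \True \}}$ by a direct case analysis: in $\DMfour$, $x \vee y = \True$ with $x, y \neq \True$ forces $\{ x, y \} = \{ \Both, \Neither \}$, which makes rules 1 and 2 sound, and since $u \wedge \dmneg u \in \{ \False, \Both, \Neither \}$ pins the admissible values of $x$ and $y$ down to a single coatom, rules 3 and 4 reduce in $\DMfour$ to trivial instances. By the single-conclusion completeness theorem already invoked, each reduced model of the axiomatization has the form $\pair{\alg{A}}{\{ \top \}}$, with $\alg{A}$ a De~Morgan lattice whose top $\top$ is the unique element of the filter, or $\pair{\alg{A}}{\emptyset}$; the empty-filter case is dispatched by the product embedding used for Theorem~\ref{thm: etl eq}.

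For the main case, suppose a valuation $v$ on $\pair{\alg{A}}{\{ \top \}}$ validates every $\gamma \in \Gamma$ and refutes every $\delta \in \Delta$, so that $v(\gamma) = \top$ and $a_{i} \assign v(\delta_{i}) \neq \top$ for each $\delta_{i} \in \Delta$. It~suffices to find a proper prime filter $U$ on $\alg{A}$ containing $\top$ such that for each $i$ either $a_{i} \notin U$ or $\dmneg a_{i} \in U$: by Fact~\ref{fact: hf} the homomorphism $h_{U} \colon \alg{A} \to \DMfour$ then sends $\top$ to $\True$ and each $a_{i}$ to an element different from $\True$, so $h_{U} \circ v$ refutes the rule in $\pair{\DMfour}{\{ \True \}}$. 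Such a $U$ would be built by choosing a subset $S \subseteq \{ 1, \dots, n \}$ for which the filter $F_{S}$ generated by $\{ \top \} \cup \set{\dmneg a_{i}}{i \in S}$ is disjoint from the ideal $I_{S}$ generated by $\set{a_{j}}{j \notin S}$, and then invoking Lemma~\ref{lemma: filter-ideal separation} to extract a prime filter $U \supseteq F_{S}$ disjoint from a prime ideal $\supseteq I_{S}$.

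The crux is producing such an $S$ using rules 1--4, and this is the main obstacle. When $\bigvee_{j=1}^{n} a_{j} \neq \top$ the choice $S = \emptyset$ works immediately. More generally, if some $i$ is \emph{essential} to the cover, in the sense that $\bigvee_{j \neq i} a_{j} \neq \top$, then rule~2 applied to $a_{i}$ and $\bigvee_{j \neq i} a_{j}$ gives $\dmneg a_{i} \vee \bigvee_{j \neq i} a_{j} = \top$, forcing $\dmneg a_{i} \not\leq \bigvee_{j \neq i} a_{j}$ and making $S = \{ i \}$ succeed. The genuinely difficult configurations are those in which every $a_{i}$ is redundant in the cover, corresponding to the presence of several essentially disjoint minimal sub-covers of $\top$; here rules 3 and 4, whose fixpoint premise $(u \wedge \dmneg u) \vee x$ encodes the coexistence of the two coatoms $\Both$ and $\Neither$ of $\DMfour$, are needed to enlarge $S$ inductively while preserving $F_{S} \cap I_{S} = \emptyset$. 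I~would complete the argument by induction on $n$, each step using rules 3 and 4 to replace a problematic $a_{i}$ by data for which the essentiality argument of rules 1 and 2 becomes applicable.
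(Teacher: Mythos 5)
Your overall plan --- pass to a reduced model $\pair{\alg{A}}{\{\top\}}$, fix a refuting valuation with conclusion values $a_{i} \neq \top$, and build a single proper prime filter $U \ni \top$ such that $a_{i} \notin U$ or $\dmneg a_{i} \in U$ for every $i$, so that $h_{U}$ of Fact~\ref{fact: hf} carries the counterexample into $\pair{\DMfour}{\{ \True \}}$ --- is a legitimate shape for a multiple-conclusion completeness argument, and your treatment of the cases $\bigvee_{j} a_{j} \neq \top$ and ``some $a_{i}$ is essential'' is correct. But the proof has a genuine gap exactly where you yourself locate the crux: the construction of $S$ when every $a_{i}$ is redundant in a cover of $\top$ is never carried out. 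You do not say which instances of rules 3 and 4 are applied, what the inductive invariant is, or why $F_{S} \cap I_{S} = \emptyset$ is preserved at each step; and since $\Delta$ may be infinite, an induction on $n$ would not suffice in any case. This missing step is the entire content of the theorem: for a general De~Morgan lattice with designated top (take $\DMfour \times \DMfour$ with conclusion values $(\Both,\True), (\Neither,\True), (\True,\Both), (\True,\Neither)$) no such $U$ exists, so everything hinges on using the four rules to exclude such configurations, and that is precisely what is left open. The paper does this in a structural rather than filter-theoretic way: rules 1 and 2, together with a quasi-equation valid in $\DMfour$ and hence in all De~Morgan lattices, force any $a, b < \True$ with $a \vee b = \True$ to satisfy $a = \dmneg a$ and $b = \dmneg b$, and rules 3 and 4, read through the description of Leibniz congruences in Fact~\ref{fact: leibniz of bd}, then identify every other nontrivial element with $a$ or with $b$ in the reduced model. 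Hence every non-trivial reduced model is isomorphic to $\alg{B}_{\alg{2}}$, $\Kthree$, or $\DMfour$ with top designated, and completeness is immediate. Until you supply your inductive step in comparable detail, the theorem is not proved.

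A secondary error: you dispatch the case $E = \emptyset$ ``by the product embedding used for Theorem~\ref{thm: etl eq}.'' That argument is sound only for single-conclusion rules. The class of models of a multiple-conclusion rule is closed under substructures but not under products: a refuting valuation on a substructure of $\pair{\DMfour}{\{ \True \}} \times \pair{\DMfour}{\{ \True \}}$ need not yield one on $\pair{\DMfour}{\{ \True \}}$, because different coordinates may validate different members of $\Delta$. The correct observation, as in the paper, is that when $E = \emptyset$ only rules with an empty premise set can fail, and every such rule already fails in $\pair{\DMfour}{\{ \True \}}$ under the valuation sending every variable to $\Neither$.
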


\begin{proof}
  These rules are satisfied by $\pair{\DMfour}{\{ \True \}}$. Conversely, suppose that a rule fails in a non-trivial reduced model $\pair{\alg{A}}{E}$ of these rules. We know that $E = \{ \True \}$ or $E = \emptyset$ by Fact~\ref{fact: models of bd}, and $E = \emptyset$ only if $\alg{A}$ is a singleton. If $E = \emptyset$, then $\pair{\alg{A}}{E}$ validates each rule with a non-empty set of premises, therefore the rule in question must have an empty set of premises. But each such rule fails in $\pair{\DMfour}{\{ \True \}}$ if we interpret each variable by $\Neither$ (or by $\Both$). We may therefore assume that $E = \{ \True \}$.

  If there are no $a, b < \True$ such that $a \vee b = \True$, then the equivalence relation which collapses all elements of $\alg{A}$ other than $\True$~and~$\False$ is a compatible congruence and the structure $\pair{\alg{A}}{E}$ is isomorphic to either $\pair{\Btwo}{\{ \True \}}$ or $\pair{\Kthree}{\{ \True \}}$. But these are both sub\-structures of $\pair{\DMfour}{\{ \True \}}$, therefore each rule which fails in one of the also fails in the structure $\pair{\DMfour}{\{ \True \}}$.

  Suppose therefore that there are $a, b < \True$ such that $a \vee b = \True$. The first two rules in our axiomatization ensure that without loss of generality we can take $a \leq \dmneg a$ and $b \leq \dmneg b$. Because the rule
\begin{align*}
  x \vee y \approx \True ~ \& ~ x \approx x \wedge \dmneg x ~ \& ~ y \approx y \wedge \dmneg y & \vdash x \approx x \vee \dmneg x
\end{align*}
  holds in $\DMfour$, it also holds in the De~Morgan lattice $\alg{A}$, hence $a = \dmneg a$ and $b = \dmneg b$. We now claim that the following implications hold:
\begin{align*}
  a \vee c = \True \text{ for } \False < c < \True & \implies c = b, \\
  b \vee c = \True \text{ for } \False < c < \True & \implies c = a, \\
  a \vee c < \True \text{ for } \False < c < \True & \implies c = a, \\
  b \vee c < \True \text{ for } \False < c < \True & \implies c = b.
\end{align*}
  Consider for example the first of these implications. It can be reformulated as
\begin{align*}
  \ExTrue(a \vee c) & \implies \ExTrue(c) \text{ or } \ExTrue(\dmneg c) \text{ or } \pair{c}{b} \in \Leibniz{\alg{A}} E.
\end{align*}
  But this is precisely what the last two axioms express if we recall the description of Leibniz congruences of models of $\BD$ (Fact~\ref{fact: leibniz of bd}). The four implications above now imply that $\alg{A}$ has exactly four distinct elements, namely $\False, \True, a, b$. We also know that $a \vee b = \True$ and $a = \dmneg a$ and $b = \dmneg b$. But this implies that $\alg{A}$ is isomorphic to $\DMfour$ and $\pair{\alg{A}}{E}$ is isomorphic to $\pair{\DMfour}{\{ \True \}}$.
\end{proof}

\begin{theorem}
  In order to axiomatize the expansion of the above logic by some of the constants $\True$, $\Neither$, $\Both$, it suffices to add the appropriate subset of the following rules:
\begin{align*}
  \emptyset & \vdash \ExTrue(\True), & \ExTrue(\Neither) & \vdash \emptyset, & \ExTrue(\Both) & \vdash \emptyset, \\
  \emptyset & \vdash \ExTrue(\Neither \vee \Both), & \ExTrue(\dmneg \Neither) & \vdash \emptyset, & \ExTrue(\dmneg \Both) & \vdash \emptyset.
\end{align*}
\end{theorem}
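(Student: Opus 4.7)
\textbf{The plan} is to augment the proof of Theorem~\ref{thm: etl mc}: given a rule failing in a non-trivial reduced model $\pair{\alg{A}}{E}$ of the extended logic, I will verify that the rule also fails in $\pair{\DMfour}{\{\True\}}$ equipped with the same constants. The proof of Theorem~\ref{thm: etl mc} already shows that $\pair{\alg{A}}{E}$ must be either trivial with $E = \emptyset$, or isomorphic to one of $\pair{\Btwo}{\{\True\}}$, $\pair{\Kthree}{\{\True\}}$, $\pair{\DMfour}{\{\True\}}$; my task reduces to interpreting the constants on the target structure correctly in each sub-case of which constants appear in the signature.

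First I would dispose of the case $E = \emptyset$. The rule $\emptyset \vdash \ExTrue(\True)$ precludes this whenever the constant $\True$ is in the signature, and $\emptyset \vdash \ExTrue(\Neither \vee \Both)$ does the same whenever both $\Neither$ and $\Both$ are. In the remaining sub-cases, at most one of $\Neither$, $\Both$ occurs and $\True$ is absent; here, exactly as in Theorem~\ref{thm: etl mc}, the failing rule must have empty premises, and its conclusion is refuted in $\pair{\DMfour}{\{\True\}}$ by assigning every variable the value of whichever of $\Neither$, $\Both$ is \emph{absent} from the signature. The new constant rules are satisfied at such an assignment, because $\ExTrue(\Neither)$, $\ExTrue(\Both)$, $\ExTrue(\dmneg \Neither)$, and $\ExTrue(\dmneg \Both)$ all fail in $\pair{\DMfour}{\{\True\}}$.

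Second, for $E = \{\True\}$ I would read off the forced interpretation of each constant from the additional axioms. The rule $\emptyset \vdash \ExTrue(\True)$ forces $\True$ to be the top element; $\ExTrue(\Neither) \vdash \emptyset$ together with $\ExTrue(\dmneg \Neither) \vdash \emptyset$ forces $\Neither$ to be a proper middle element (neither $\True$ nor $\False$), and similarly for $\Both$. Hence $\Btwo$ is excluded whenever either $\Neither$ or $\Both$ appears in the signature. When both appear, $\emptyset \vdash \ExTrue(\Neither \vee \Both)$ additionally forces $\Neither \vee \Both = \True$, which excludes $\Kthree$ (whose unique middle element $\Inter$ satisfies $\Inter \vee \Inter = \Inter < \True$); so $\alg{A} \iso \DMfour$, and after possibly post-composing with the De~Morgan automorphism exchanging the two fixpoints we arrange that $\Neither$ and $\Both$ correspond to the intended elements. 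When only one of $\Neither$, $\Both$ is present, the two embeddings $\Kthree \hookrightarrow \DMfour$ noted after Fact~\ref{fact: si dmas}, namely $\Inter \mapsto \Neither$ or $\Inter \mapsto \Both$, supply the required expanded embedding.

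The main obstacle is merely the bookkeeping across the various sub-cases of which constants appear in the signature and the verification that each additional rule really does what is claimed; no new technical content is needed beyond the observations already collected in the discussion following Fact~\ref{fact: si dmas} and the case analysis in Theorem~\ref{thm: etl mc}.
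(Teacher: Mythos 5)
Your overall strategy is the same as the paper's: re-run the case analysis of Theorem~\ref{thm: etl mc} and check that the added rules force each constant to be interpreted by the intended element of $\Btwo$, $\Kthree$, or $\DMfour$ (using the swap automorphism of $\DMfour$ where needed). Your treatment of the case $E = \{\True\}$ is correct and matches the paper's (much terser) argument. Note also that adding constants does not change which models are reduced, since Leibniz congruences are already defined via polynomials with arbitrary parameters from $\alg{A}$; so the classification of reduced models from Theorem~\ref{thm: etl mc} carries over verbatim, as you implicitly assume.

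There is, however, a concrete error in your handling of the case $E = \emptyset$ (the singleton reduced model, which survives exactly when $\True$ is absent and at most one of $\Neither$, $\Both$ is present). You propose to refute a premise-free rule in $\pair{\DMfour}{\{\True\}}$ by assigning every variable the value of whichever of $\Neither$, $\Both$ is \emph{absent} from the signature. This fails: if the signature contains $\Neither$ and you assign every variable the value $\Both$, the term $\Neither \vee x$ evaluates to $\Neither \vee \Both = \True$, so the conclusion $\ExTrue(\Neither \vee x)$ is validated and the rule $\emptyset \vdash \ExTrue(\Neither \vee x)$ — which does fail in the singleton model with $E = \emptyset$ — is not witnessed as failing by your valuation. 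The point of the original argument is that all terms should land in a one-element subalgebra avoiding $\True$; you must therefore assign every variable the value of the constant that \emph{is} present ($\Neither$ if $\Neither$ is in the signature, $\Both$ if $\Both$ is, either one if neither is). Then every term evaluates to that fixpoint, no conclusion of the form $\ExTrue(t)$ is validated, and the rule indeed fails in $\pair{\DMfour}{\{\True\}}$. With this correction the proof goes through.
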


\begin{proof}
  These rules ensure that in each reduced model $\True$ is the top element, that $\Neither$ is one of the elements $a$ or $b$ in the previous proof (or the middle element of $\Kthree$), and likewise for $\Both$. Moreover, if both $\Neither$ and $\Both$ are in the signature, then they are Boolean complements in the distributive lattice reduct, thus if $\Neither$ is $a$ ($b$), then $\Both$ is $b$ ($a$), and vice versa. The structure $\pair{\alg{A}}{E}$ is therefore isomorphic to a substructure of the appropriate expansion of $\pair{\DMfour}{\{ \True \}}$.
\end{proof}

\end{document}